\numberwithin{equation}{section}
\numberwithin{figure}{section}
\newtheorem{theorem}{Theorem}[section]
\newtheorem{proposition}[theorem]{Proposition}
\newtheorem{lemma}[theorem]{Lemma}
\newtheorem{corollary}[theorem]{Corollary}
\theoremstyle{definition}
\newtheorem{definition}[theorem]{Definition}
\theoremstyle{remark}
\newtheorem{remark}[theorem]{Remark}
\DeclarePairedDelimiter{\norm}{\lVert}{\rVert}
\newcommand{\N}{\mathbb{N}}
\newcommand{\R}{\mathbb{R}}
\newcommand{\C}{\mathbb C}
\renewcommand{\leq}{\leqslant}
\renewcommand{\geq}{\geqslant}
\DeclareMathAlphabet{\mathpzc}{OT1}{pzc}{m}{it}
\renewcommand{\Re}{\mathcal R\!\mathpzc{e}}
\renewcommand{\Im}{\mathcal I\!\mathpzc{m}}
\begin{document}

\title[Instability of multi-solitons]{Instability of multi-solitons for derivative nonlinear Schr\"odinger equations}

\author[Phan Van Tin]{Phan Van Tin}

\address[Phan Van Tin]{Institut de Math\'ematiques de Toulouse ; UMR5219,
  \newline\indent
  Universit\'e de Toulouse ; CNRS,
  \newline\indent
  UPS IMT, F-31062 Toulouse Cedex 9,
  \newline\indent
  France}
\email[Phan Van Tin]{van-tin.phan@univ-tlse3.fr}

\subjclass[2020]{35Q55; 35C08; 35Q51}

\date{\today}
\keywords{Nonlinear derivative Schr\"odinger equations, Multi solitons, instability}

\begin{abstract}
In \cite{CoWu18} and \cite{TaXu18}, the authors proved the stability of multi-solitons for derivative nonlinear Schr\"odinger equations. Roughly speaking, sum of finite stable solitons is stable. We predict that if there is one unstable solition then multi-soliton is unstable. This prediction is proved in \cite{CoLe11} for classical nonlinear Schr\"odinger equations. In this paper, we proved this prediction for derivative nonlinear Schr\"odinger equations by using the method of C\^{o}te-Le Coz \cite{CoLe11} with the help of Gauge transformation.
\end{abstract}

\maketitle
\tableofcontents

\section{Introduction}

We consider the following derivative nonlinear Schr\"odinger equations
\begin{equation}
\label{eq 1}
iu_t+u_{xx}+i|u|^2u_x+b|u|^4u=0,
\end{equation}
and 
\begin{equation}
\label{eq2}
iu_t+u_{xx}+i|u|^{2\sigma}u_x=0,
\end{equation}
where $b\in \R$, $\sigma\geq 1$ and $u: \R\times\R\rightarrow \C$ is unknown function. 

The local well posedness and the global well posedness of derivative nonlinear Schr\"odinger equations were studied in many works (see \cite{BaPe20,BiLi01,CoKeStTaTa01,CoKeStTaTa02,GuWu95,HaOz92,
HaOz94,JeLiPeSu20,MiWuXu11,Oz96,Ta99,TsFu80,TsFu81,Wu15,Wu13} for \eqref{eq 1} and see e.g \cite{HaOz16,Santos15} for \eqref{eq2}). The existence of blow up solutions for \eqref{eq 1} and \eqref{eq2} is an open question. 

The equation \eqref{eq 1} and \eqref{eq2} have Hamilton structures and they do not possess the Galilean invariant. The family of solitons of the derivative nonlinear Schr\"odinger equations have two parameters. A soliton of \eqref{eq 1} and \eqref{eq2} is a solution of form $R_{\omega,c}(t,x)=e^{i\omega t}\phi_{\omega,c}(x-ct)$, where $\omega >0$ and $c^2<4\omega$. The stability and instability of solitons are proved in many works (see \cite{Ohta14,KwWu18,Ha21,GuWu95,CoOh06} for \eqref{eq 1} and \cite{LiSiSu13,GuNiWu20} for \eqref{eq2}). 

Multi-soliton is a solution of \eqref{eq 1}, \eqref{eq2} which behaves at large time like a sum of finite solitons. In \cite{Tinpaper3,Tinpaper4}, Tin proved the existence of multi-solitons for \eqref{eq 1} and \eqref{eq2} respectively. The author used fixed point method, Strichartz estimates and gauge transformations to obtain the desired results. The stability of multi-solitons was proved in \cite{CoWu18} for \eqref{eq 1} in the case $b=0$ and for \eqref{eq2} in the case $\sigma \in (1,2)$ in \cite{TaXu18} provided all solitons are stable. Roughly speaking, the multi-solitons behave at large time like a sum of stable solitons are stable. We predict that if there is one unstable soliton then multi-soliton is unstable in some sense. This prediction was proved in the case of classical nonlinear Schr\"odinger equation by the work of C\^{o}te-Le Coz \cite{CoLe11}. In this paper, using the idea of C\^{o}te-Le Coz, we show that if soliton of \eqref{eq 1} (\eqref{eq2}) is linearly unstable then it is orbitally unstable. Moreover, multi-soliton behaving like a sum of one unstable soliton and finite solitons is not unique and unstable. 

\subsection{Instability of multi-solitons for \eqref{eq 1}}
\label{sec1}
The flow of \eqref{eq 1} in $H^1(\R)$ satisfies the following conservation laws.
\begin{align*}
\text{Energy} &\quad E(u):=\frac{1}{2}\norm{u_x}^2_{L^2}+\frac{1}{4}\Im\int_{\R}|u|^2u_x\overline{u}\,dx-\frac{b}{6}\norm{u}^6_{L^6},\\
\text{Mass} &\quad Q(u):=\frac{1}{2}\norm{u}^2_{L^2},\\
\text{Momentum}&\quad P(u):=-\frac{1}{2}\Im\int_{\R}u_x\overline{u}\,dx.
\end{align*}
For each $\omega,c\in\R$ and $u\in H^1(\R)$, we define
\[
S_{\omega,c}(u)=E(u)+\omega Q(u)+cP(u).
\]
Recall that a soliton of \eqref{eq 1} is a solution of form $R_{\omega,c}=e^{i\omega t}\phi_{\omega,c}(x-ct)$, for $\phi_{\omega,c}$ is a critical point of $S_{\omega,c}$. Moreover, $\phi_{\omega,c}$ is (up to phase shift and translation) of form 
\[
\phi_{\omega,c}=\Phi_{\omega,c}\exp\left(\frac{ic}{2}x-\frac{i}{4}\int_{-\infty}^x|\Phi_{\omega,c}(y)|^2\,dy\right),
\]
where $\Phi_{\omega,c}$ is given by if $\gamma:=1+\frac{16}{3}b>0$,
\begin{equation}\label{formula Phi}
\Phi_{\omega,c}^2(x)=\left\{ \begin{matrix}
\frac{2(4\omega-c^2)}{\sqrt{c^2+\gamma (4\omega-c^2)}\cosh(\sqrt{4\omega-c^2} x)-c} & \text{ if } -2\sqrt{\omega}<c<2\sqrt{\omega},\\
\frac{4c}{(cx)^2+\gamma} & \text{ if } c=2\sqrt{\omega},
\end{matrix}
\right.
\end{equation}
and if $\gamma \leq 0$ ($b \leq -\frac{3}{16}$),
\[
\Phi_{\omega,c}^2(x)=\frac{2(4\omega-c^2)}{\sqrt{c^2+\gamma(4\omega-c^2)}\cosh(\sqrt{4\omega-c^2}x)-c} \text{ if } -2\sqrt{\omega}<c<-2s_{*}\sqrt{\omega},
\]
where $s_{*}=s_{*}(\gamma)=\sqrt{\frac{-\gamma}{1-\gamma}}$. 

We note that the following condition on the parameters $\gamma$ and $(\omega,c)$ is a necessary and sufficient condition for the existence of non-trivial solutions of \eqref{eq 1} vanishing at infinity (see \cite{BeLi831}):
\begin{align*}
\text{if } \gamma>0 (\Leftrightarrow b>\frac{-3}{16}) \text{ then } & -2\sqrt{\omega}<c\leq 2\sqrt{\omega},\\
\text{if } \gamma\leq 0 (\Leftrightarrow b\leq\frac{-3}{16}) \text{ then }& -2\sqrt{\omega}<c< -2s_{*}\sqrt{\omega}.
\end{align*}

Define $d(\omega,c)=S_{\omega,c}(\phi_{\omega,c})$ and 
\begin{align*}
H_{\omega,c}(v)&=(E''(\phi_{\omega,c})+\omega Q''(\phi_{\omega,c})+cP''(\phi_{\omega,c}))(v)\\
&=-\partial_{xx}v+\omega v+ic\partial_x v-2i\partial_x\phi_{\omega,c}\Re(\phi_{\omega,c}\overline{v})-i|\phi_{\omega,c}|\partial_x v\\
&\quad -b(|\phi_{\omega,c}|^4v+4|\phi_{\omega,c}|^2\phi_{\omega,c}\Re(\phi_{\omega,c}\overline{v})).
\end{align*}
Let $n(H_{\omega,c})$ be the number of negative eigenvalue of $H_{\omega,c}$ and $p(d''(\omega,c))$ be the number of positive eigenvalue of the matrix $d''(\omega,c)$, which is defined by
\[
d''(\omega,c)=\begin{bmatrix}
\partial_{\omega}^2d(\omega,c)&\partial_c\partial_{\omega}d(\omega,c)\\
\partial_{\omega}\partial_cd(\omega,c)& \partial_c^2d(\omega,c)
\end{bmatrix}=\begin{bmatrix}
\partial_{\omega}Q(\phi_{\omega,c})&\partial_{c}Q(\phi_{\omega},c)\\
\partial_{\omega}P(\phi_{\omega,c})&\partial_cP(\phi_{\omega,c})
\end{bmatrix}.
\]

The stability/instability of solitons $R_{\omega,c}$ can be given by the abstract theory of Grillakis-Shatah-Strauss \cite{GrShSt87,GrShSt90}. We have the following result.
\begin{theorem}\label{GrShSt}
\[
p(d''(\omega,c))\leq n(H_{\omega,c}). 
\]
Furthermore, under the condition that $d$ is non-degenerate at $(\omega,c)$:
\begin{itemize}
\item[(i)] If $p(d''(\omega,c))=n(H_{\omega,c})$ then $R_{\omega,c}$ is orbitally stable;
\item[(ii)] If $n(H_{\omega,c})-p(d''(\omega,c))$ is odd then $R_{\omega,c}$ is orbitally unstable.
\end{itemize}
\end{theorem}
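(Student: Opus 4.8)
The plan is to obtain Theorem~\ref{GrShSt} as an instance of the abstract orbital stability/instability theory of Grillakis--Shatah--Strauss~\cite{GrShSt87,GrShSt90}, the substance being to check that \eqref{eq 1} fits their framework (much of which is available in the existing literature on DNLS solitons, e.g. \cite{CoOh06,Ohta14,KwWu18,Ha21}). First I would write \eqref{eq 1} in Hamiltonian form $\partial_t u=\mathcal J E'(u)$ on $X=H^1(\R;\C)$, with the two-parameter symmetry group $G=\R_\theta\times\R_y$ acting by $T(\theta,y)u=e^{i\theta}u(\cdot-y)$; Noether's theorem attaches to its generators the conserved quantities $Q$ (phase) and $P$ (translation), and the soliton $R_{\omega,c}$ is the relative equilibrium with multiplier $(\omega,c)$, i.e. $\phi_{\omega,c}$ solves $S_{\omega,c}'(\phi_{\omega,c})=E'(\phi_{\omega,c})+\omega Q'(\phi_{\omega,c})+cP'(\phi_{\omega,c})=0$. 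Then I would verify the structural hypotheses: (a) local well-posedness of \eqref{eq 1} in $H^1$ and continuity of $E,Q,P$, which come from the cited local theory; (b) $C^1$ (in fact smooth) dependence of $(\omega,c)\mapsto\phi_{\omega,c}\in H^1$, immediate from the explicit formulae \eqref{formula Phi}; (c) the spectral picture of the self-adjoint operator $H_{\omega,c}=S_{\omega,c}''(\phi_{\omega,c})$: a finite number $n(H_{\omega,c})$ of negative eigenvalues, $0$ an isolated eigenvalue with --- thanks to the non-degeneracy of $d$ --- $\ker H_{\omega,c}=\Span\{i\phi_{\omega,c},\partial_x\phi_{\omega,c}\}$, and the remainder of the spectrum strictly positive.

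The inequality $p(d''(\omega,c))\le n(H_{\omega,c})$ I would prove by the standard differentiation identity. Since $\partial_\omega S_{\omega,c}=Q$ and $\partial_c S_{\omega,c}=P$, evaluating at the critical point gives $\partial_\omega d=Q(\phi_{\omega,c})$, $\partial_c d=P(\phi_{\omega,c})$, while differentiating $S_{\omega,c}'(\phi_{\omega,c})=0$ in $\omega$ and $c$ yields $H_{\omega,c}\partial_\omega\phi_{\omega,c}=-Q'(\phi_{\omega,c})$ and $H_{\omega,c}\partial_c\phi_{\omega,c}=-P'(\phi_{\omega,c})$; hence
\[
d''(\omega,c)=-\bigl(\dual{H_{\omega,c}\,\partial_{\mu_i}\phi_{\omega,c}}{\partial_{\mu_j}\phi_{\omega,c}}\bigr)_{1\le i,j\le 2},\qquad (\mu_1,\mu_2)=(\omega,c).
\]
By Sylvester's law of inertia, $p(d''(\omega,c))$ equals the negative index of the quadratic form $v\mapsto\dual{H_{\omega,c}v}{v}$ restricted to $\Span\{\partial_\omega\phi_{\omega,c},\partial_c\phi_{\omega,c}\}$, and the min--max principle bounds this by the negative index $n(H_{\omega,c})$ of $H_{\omega,c}$ on all of $X$. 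With (a)--(c) in hand, assertions (i) and (ii) are then exactly the GSS stability and instability conclusions: when $p(d'')=n(H_{\omega,c})$ and $d''$ is invertible one uses $Q$ and $P$ to neutralise the $n(H_{\omega,c})$ unstable directions of $H_{\omega,c}$ and builds a Lyapunov functional that is coercive modulo the symmetry orbit, giving orbital stability; when $n(H_{\omega,c})-p(d'')$ is odd, an index/degree count forces $\mathcal J H_{\omega,c}$ to have an eigenvalue with positive real part, and this linear instability is upgraded to orbital instability of $R_{\omega,c}$.

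I expect the main obstacle to be item (c). The operator $H_{\omega,c}$ carries the first-order term $ic\partial_x$ together with the derivative and quartic nonlinear contributions, so it is not of Schr\"odinger type and its Morse index is not directly accessible. The remedy --- announced in the abstract --- is the gauge transformation used in~\cite{Tinpaper3,Tinpaper4}: conjugating by the gauge factor $\exp\!\bigl(\tfrac{i}{4}\int_{-\infty}^x|\Phi_{\omega,c}(y)|^2\,dy\bigr)$ that compensates the phase of $\phi_{\omega,c}$ and splitting into real and imaginary parts along $\Phi_{\omega,c}$ turns $H_{\omega,c}$ into a direct sum of two one-dimensional Schr\"odinger operators with explicit potentials built from $\Phi_{\omega,c}$. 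Sturm--Liouville theory then identifies their Morse indices and kernels (the "positive" component having a sign-definite ground state, the "negative" component contributing the unstable directions), and the non-degeneracy of $d$ upgrades the kernel count to exactly $\Span\{i\phi_{\omega,c},\partial_x\phi_{\omega,c}\}$. Carrying out this reduction carefully, and translating the two-component spectral bookkeeping back into $n(H_{\omega,c})$, is where the real work lies; everything else is a direct application of~\cite{GrShSt87,GrShSt90}.
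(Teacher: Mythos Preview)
The paper gives no proof of Theorem~\ref{GrShSt}: it is stated as a direct quotation of the abstract Grillakis--Shatah--Strauss theory \cite{GrShSt87,GrShSt90}, with the verification of the structural hypotheses deferred entirely to the cited DNLS stability literature \cite{CoOh06,Ohta14,KwWu18,Ha21,LiSiSu13}. Your outline is therefore exactly the content the paper is importing, just written out, and is correct as such.

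One caveat on your anticipated ``main obstacle'': the computation of $n(H_{\omega,c})$ via a gauge conjugation and Sturm--Liouville reduction is not something the paper performs. For \eqref{eq2} the value $n(H_{\omega,c})=1$ is simply cited from \cite{LiSiSu13}; for \eqref{eq 1} the paper does not compute $n(H_{\omega,c})$ at all and in fact leaves $n(H_{\omega,c})=1$ as an explicit \emph{prediction} in the remark following the GSS-criterion theorem. The gauge transformation advertised in the abstract is used elsewhere --- in the fixed-point construction of Section~2 --- not for the linearised spectral count. So while your spectral reduction is a plausible route to establishing (c), it goes beyond what the paper does or claims for Theorem~\ref{GrShSt}.
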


Let $K \in\N$, $K>1$. For each $1 \leq j \leq K$, let $(\theta_j,x_j)\in\R^2$ and $(c_j,\omega_j)$ satisfy the condition of existence of soliton. For each $j \in \{1,2,..,K\}$, we set
\begin{equation}\label{define of R_j}
R_j(t,x)=e^{i\theta_j}R_{\omega_j,c_j}(t,x-x_j).
\end{equation}
We define for each $j$, $h_j=\sqrt{4\omega_j-c_j^2}$. As in \cite[Lemma 4.1]{Tinpaper3}, 
\begin{equation}\label{estimate R_j}
|R_j(t,x)| \lesssim e^{-\frac{h_j}{2}|x-c_jt|}.
\end{equation}
The profile of a multi-soliton is a sum of the form:
\begin{equation}\label{profileofinfinitesoliton}
R =\sum_{j=1}^{K}R_j.
\end{equation}
A solution of \eqref{eq 1} is called a multi-soliton if
\[
\norm{u(t)-R(t)}_{H^1} \rightarrow 0 \text{ as } t \rightarrow \infty.
\]
 
Since solutions of \eqref{eq 1} are invariant by phase shift and translation, we may assume that $\theta_1=x_1=0$ without loss of generality. For convenience, we denote $\phi_j=\phi_{\omega_j,c_j}$ for all $j$ and $\phi=\phi_1$. Then $R_1(t,x)=e^{i\omega_1 t}\phi(x-c_1t)$. We have
\begin{equation}
\label{equation of phi}
-\phi_{xx}+\omega_1\phi+ic_1\phi_x-i|\phi|^2\phi_x-b|\phi|^4\phi=0.
\end{equation} 
Let $u(t,x)$ be a solution of \eqref{eq 1} and set $u= e^{i\omega_1 t} (\phi(x-c_1 t)+v(t,x-c_1 t))$. Using \eqref{equation of phi}, we have 
\begin{align*}
0&=iu_t+u_{xx}+i|u|^2u_x+b|u|^4u\\
&=i(i\omega_1 e^{i\omega_1 t} (\phi+v)+e^{i\omega_1 t}(-c_1\phi_x+v_t-c_1v_x))+e^{i\omega_1 t}(\phi_{xx}+v_{xx})\\
&\quad +e^{i\omega_1 t}i|\phi+v|^2(\phi_x+v_x)+be^{i\omega_1 t}|\phi+v|^4(\phi+v)\\
&=e^{i\omega_1 t}(-\omega_1 v+iv_t-ic_1v_x+v_{xx}+i(|\phi+v|^2(\phi_x+v_x)-|\phi|^2\phi_x)+b(|\phi+v|^4(\phi+v)-|\phi|^4\phi))\\
&=e^{i\omega t}i(v_t+L_{\C}(v)+\mathcal{M}_{\C}(v)), 
\end{align*}
where $L_{\C}$ is linearized operator around $R_1$ and is defined by
\begin{equation}\label{define of L_C}
L_{\C}(v)=-iv_{xx}+i\omega_1 v-c_1v_x+2\Re(\phi\overline{v})\phi_x+|\phi|^2v_x-ib(|\phi|^4v+4|\phi|^2\phi\Re(\phi\overline{v})),
\end{equation}
and the quadratic term in $v$, $\mathcal{M}_{\C}$ is defined by
\[
\mathcal{M}_{\C}(v)=2\Re(\phi \overline{v})v_x+|v|^2\phi_x+|v|^2v_x-ib(\phi+v)(4\Re(\phi\overline{v})^2+2|v|^2|\phi|^2+4|v|^2\Re(\phi\overline{v})+|v|^4).
\]
We may check that $L_{\C}=iH_{\omega_1,c_1}$. We need the following assumption.
\begin{equation}
\tag{A1}
L_{\C} \text{has an eigenvalue } \lambda\in\C \text{ such that } \rho:=\Re\lambda>0. 
\label{A1}
\end{equation}
Our first goal is to prove that if soliton is linearly unstable then it is orbitally unstable. To do this, we prove the following result.
\begin{theorem}\label{thm1}
Assume that \eqref{A1} holds. Then there exists a function $Y(t)$ such that $\norm{Y(t)}_{H^2} \leq Ce^{-\rho t}$ and $e^{\rho t}\norm{Y(t)}_{H^2}$ is non-zero and periodic (where $\rho$ is given by \eqref{A1}) and $Y(t)$ is a solution to the linearized flow around $R_1$. For all $a\in\R$, there exist $T_0\in\R$ large enough, a constant $C>0$ and a solution $u_a$ to \eqref{eq 1} defined on $[T_0,\infty)$ such that
\[
\norm{u_a(t)-R_1(t)-aY(t)}_{H^2} \leq Ce^{-2\rho t}, \quad \forall t\geq T_0.
\]
\end{theorem}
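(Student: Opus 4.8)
\emph{Step 1: the unstable direction $Y$.} By \eqref{A1} fix $\psi\in L^2(\R;\C)$ with $L_\C\psi=\lambda\psi$, $\lambda=\rho+i\mu$, $\rho>0$. Since $L_\C$ is $-i\partial_{xx}+i\omega_1$ plus lower-order terms with smooth, exponentially decaying coefficients (recall $\phi$ decays exponentially, cf. \eqref{estimate R_j}), an elliptic bootstrap gives $\psi\in H^2(\R)$, in fact $\psi$ smooth and exponentially localized. Because $L_\C=iH_{\omega_1,c_1}$ is Hamiltonian I pass to the realification $L^2(\R;\R^2)$: writing the complexified eigenvector as $\Psi_R+i\Psi_I$ with $\Psi_R,\Psi_I$ real, the plane $\Span_\R\{\Psi_R,\Psi_I\}$ is invariant and $e^{-tL_\C}$ acts on it by $e^{-\rho t}$ times a rotation of angle $\mu t$. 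Set $\widetilde Y(t)=e^{-\rho t}\big(\cos(\mu t)\,\Psi_R+\sin(\mu t)\,\Psi_I\big)$ (simply $e^{-\rho t}\psi$ if $\mu=0$ and $\psi$ is real). Then $\partial_t\widetilde Y+L_\C\widetilde Y=0$, $\norm{\widetilde Y(t)}_{H^2}\leq Ce^{-\rho t}$, and $e^{\rho t}\norm{\widetilde Y(t)}_{H^2}$ is $(\pi/\mu)$-periodic (constant if $\mu=0$) and never zero, the last point because $\Psi_R,\Psi_I$ are $\R$-linearly independent when $\mu\neq0$. Finally $Y(t,x):=e^{i\omega_1 t}\widetilde Y(t,x-c_1 t)$ solves the linearized flow around $R_1$ and has the stated properties.

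\emph{Step 2: the remainder equation and the linearized propagator.} As in the computation preceding \eqref{define of L_C}, a function of the form $u_a(t,x)=e^{i\omega_1 t}\big(\phi(x-c_1 t)+v(t,x-c_1 t)\big)$ solves \eqref{eq 1} if and only if $\partial_t v+L_\C v+\mathcal{M}_\C(v)=0$. Writing $v=a\widetilde Y+w$ and using Step 1, the remainder must satisfy
\[
\partial_t w+L_\C w=-\mathcal{M}_\C\big(a\widetilde Y+w\big),
\]
and, since $u_a(t,x)-R_1(t,x)-aY(t,x)=e^{i\omega_1 t}w(t,x-c_1 t)$, it suffices to produce a solution of this equation with $\norm{w(t)}_{H^2}\leq Ce^{-2\rho t}$. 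Here I use the spectral structure of $L_\C$: its essential spectrum is purely imaginary and bounded away from $0$ (this is where $c_1^2<4\omega_1$ enters, through the symbol of $L_\C$ at $x=\pm\infty$), so off the imaginary axis $L_\C$ has only finitely many eigenvalues, of finite multiplicity, symmetric under $\lambda\mapsto\bar\lambda$ and $\lambda\mapsto-\lambda$. I take $\rho$ to be the \emph{largest} real part among all eigenvalues of $L_\C$ (replacing $\lambda$ in \eqref{A1} if necessary), so that every eigenvalue has real part $\leq\rho<2\rho$. With $H^2=X_s\oplus X_c\oplus X_u$ the spectral splitting for $\Re\lambda>0$, $\Re\lambda=0$ (which contains the generalized kernel coming from the symmetries) and $\Re\lambda<0$, and bounded projections commuting with the group $e^{-tL_\C}$, one has, for $s\geq t$,
\[
\norm{e^{-(t-s)L_\C}P_s}_{H^2\to H^2}\leq C(1+s-t)^N e^{\rho(s-t)},\qquad \norm{e^{-(t-s)L_\C}P_u}_{H^2\to H^2}\leq C,
\]
and $\norm{e^{-(t-s)L_\C}P_c}_{H^2\to H^2}\leq C(1+s-t)^N$, the polynomial factor coming from Jordan blocks on the imaginary axis.

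\emph{Step 3: contraction.} Fix $a$; for $C_*=C_*(a)$ and $T_0=T_0(a)$ large enough work on $X=\big\{w\in C([T_0,\infty),H^2):\norm{w}_X:=\sup_{t\geq T_0}e^{2\rho t}\norm{w(t)}_{H^2}\leq C_*\big\}$ with
\[
\Gamma(w)(t)=\int_t^\infty e^{-(t-s)L_\C}\,\mathcal{M}_\C\big(a\widetilde Y(s)+w(s)\big)\,ds .
\]
Every monomial of $\mathcal{M}_\C$ has degree $\geq2$ in $(v,v_x)$ and $\norm{a\widetilde Y(s)+w(s)}_{H^2}\lesssim|a|e^{-\rho s}$ for $s\geq T_0$, hence $\norm{\mathcal{M}_\C(a\widetilde Y(s)+w(s))}\lesssim a^2 e^{-2\rho s}$; combined with the bounds of Step 2, each of the three spectral components of $\Gamma(w)(t)$ is bounded by $\int_t^\infty C(1+s-t)^N e^{\rho(s-t)}e^{-2\rho s}\,ds=Ce^{-2\rho t}$, so $\Gamma(X)\subset X$. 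From $\mathcal{M}_\C(z_1)-\mathcal{M}_\C(z_2)=O\big((\norm{z_1}_{H^2}+\norm{z_2}_{H^2})\norm{z_1-z_2}_{H^2}\big)$ with $z_i=a\widetilde Y+w_i$ one gets likewise $\norm{\Gamma(w_1)-\Gamma(w_2)}_X\lesssim|a|e^{-\rho T_0}\norm{w_1-w_2}_X$, a contraction once $T_0$ is large. Differentiating the Duhamel formula, the fixed point $w$ solves the remainder equation, so $u_a(t,x):=e^{i\omega_1 t}(\phi+a\widetilde Y+w)(t,x-c_1 t)$ solves \eqref{eq 1} on $[T_0,\infty)$ --- it is the genuine $H^2$ solution with this data and does not blow up, being bounded in $H^2$ --- with $\norm{u_a(t)-R_1(t)-aY(t)}_{H^2}=\norm{w(t)}_{H^2}\leq C_* e^{-2\rho t}$. (Equivalently one may obtain $u_a$ as a weak $H^2$-limit of the solutions with final data $R_1(S_n)+aY(S_n)$ at times $S_n\to\infty$, the same estimates giving uniform bounds; this is close to the scheme of \cite{Tinpaper3}.)

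\emph{Main obstacle.} Two points carry the weight. First, $\mathcal{M}_\C$ contains the derivative term $2\Re(\phi\overline v)v_x$ (and $|v|^2 v_x$), so the contraction cannot be closed in plain $H^2$; exactly as in the local theory and in \cite{Tinpaper3}, this is dealt with by the gauge transformation --- which removes, or at least tames, the worst derivative term --- together with Strichartz estimates for $e^{-tL_\C}$, which I would incorporate into the norm defining $X$; this is where ``with the help of gauge transformation'' really enters. Second, everything rests on the spectral input of Step 2: finiteness of the non-imaginary spectrum, the legitimacy of taking $\rho$ maximal, the group bounds and their polynomial correction $(1+|t|)^N$ from the Jordan structure on $i\R$, and the tacit assumption that no eigenvalue has real part exactly $2\rho$ (if $\rho$ is not taken maximal one must instead split the Duhamel integral, propagating the fast-decaying stable modes forward from $T_0$ and the remaining modes backward from $+\infty$). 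I would isolate this spectral analysis as a preliminary lemma.
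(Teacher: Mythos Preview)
Your Step~1 is essentially the paper's construction of $Y$ and is fine. The gap is in Steps~2--3, and you already put your finger on it in the ``Main obstacle'' paragraph without actually closing it. The map $\Gamma$ you define is not a contraction on $X$ as written: $\mathcal{M}_{\C}(v)$ contains $2\Re(\phi\overline v)v_x+|v|^2v_x$, so $\|\mathcal{M}_{\C}(z_1)-\mathcal{M}_{\C}(z_2)\|_{H^2}$ requires control of $z_i$ in $H^3$, not $H^2$. Saying you would ``incorporate the gauge transformation together with Strichartz estimates for $e^{-tL_{\C}}$ into the norm'' is a plan, not a proof. In addition, the propagator bounds you state for $e^{-(t-s)L_{\C}}$ on $P_c$ (polynomial growth on the whole center part, which contains the essential spectrum) are not established anywhere and are far from automatic for a non-selfadjoint operator with derivative coefficients.

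The paper sidesteps both issues, and its mechanism is genuinely different from yours. First, it does \emph{not} close the fixed point around the first-order profile $a\widetilde Y$: it builds high-order approximate profiles $W^{N_0}$ (Proposition~\ref{pro 22}) with $W^{N_0}=aY+O(e^{-2\rho t})$ and $\partial_t W^{N_0}+L_{\R^2}W^{N_0}-\mathcal{M}_{\R^2}(W^{N_0})=O(e^{-\rho(N_0+1)t})$, so that the remaining error is as small in rate as one likes. Second, it never uses $e^{-tL_{\C}}$ or any spectral splitting: it applies the gauge transformation $u\mapsto(\varphi,\psi)$ so that the equation becomes $L\varphi=P(\varphi,\psi)$, $L\psi=Q(\varphi,\psi)$ with $L=i\partial_t+\partial_{xx}$ the \emph{free} Schr\"odinger operator and $F=(P,Q)$ Lipschitz on bounded sets of $H^2\times H^2$ (no derivative loss). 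The fixed point is then run with the unitary free propagator $S(t)$ in the space $\{\sup_{t\geq T_0}e^{\rho(N_0+1)t}\|\tilde w(t)\|_{H^2\times H^2}\leq B\}$, and the contraction constant is $K/((N_0+1)\rho)$, made $\leq\tfrac12$ by taking $N_0$ large. A final Gr\"onwall argument checks that the auxiliary relation $\tilde\psi=\partial_x\tilde\varphi-\tfrac{i}{2}(|\tilde\varphi+h|^2(\tilde\varphi+h)-|h|^2h)$ is preserved, so that undoing the gauge yields a genuine solution of \eqref{eq 1}. In short: the two devices you are missing are (i) high-order profiles to trade the eigenvalue gap $2\rho-\rho$ for an arbitrarily large gap $(N_0+1)\rho$, and (ii) the gauge transformation to replace $e^{-tL_{\C}}$ by the free group and remove the derivative loss.
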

As a consequence of \ref{thm1}, we prove that under \eqref{A1}, $R_1$ is orbitally unstable. We prove the following result.
\begin{corollary}\label{corollary1}
Under the hypothesis of Theorem \ref{thm1}, $R_1$ is orbitally unstable in the following sense. There exist $\varepsilon>0$, $(T_n)\subset \R^-$, $(u_{0,n}) \subset H^2(\R)$ and solution $(u_n)$ of \eqref{A1} defined on $[T_n,0]$ with $u_n(0)=u_{0,n}$ such that 
\[
\lim_{n\rightarrow\infty}\norm{u_{0,n}-R_1(0)}_{H^2}=0 \quad\text{ and }\quad \inf_{y\in\R,\theta\in\R}\norm{u_n(T_n)-e^{i\theta}\phi(\cdot-y)}_{L^2}\geq \varepsilon \quad \forall n\in\N.
\]
\end{corollary}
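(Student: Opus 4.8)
The plan is to deduce orbital instability from Theorem \ref{thm1} by a standard time-reversal and contradiction argument, exactly in the spirit of C\^ote--Le Coz. First I would exploit the time-reversal symmetry of \eqref{eq 1}: if $u(t,x)$ solves \eqref{eq 1} then $\overline{u(-t,x)}$ solves it as well (up to the obvious sign bookkeeping in the derivative nonlinearity), so Theorem \ref{thm1}, which gives solutions approaching $R_1+aY$ as $t\to+\infty$, yields after reflection a family of solutions that approach the soliton backward in time. Concretely, fix $a\neq 0$ and let $u_a$ be the solution from Theorem \ref{thm1} on $[T_0,\infty)$. For each integer $n\geq T_0$ set $T_n=-n<0$ and define $u_n$ on $[T_n,0]$ by translating $u_a$ in time by $-n$ (and reflecting appropriately), so that $u_{0,n}:=u_n(0)$ corresponds to $u_a(n)$ and $u_n(T_n)$ corresponds to $u_a(2n)$ \emph{or} simply take $u_n(t)=u_a(t+2n)$ reflected so that the "initial" time is $0$ and the "final" time is $T_n$; the precise indexing is routine, the point is that at one endpoint the solution is $O(e^{-2\rho n})$-close to $R_1+aY$ evaluated at a large time, hence $O(e^{-\rho n})$-close to $R_1(0)$ in $H^2$ after undoing the flow, while at the other endpoint it is $O(e^{-2\rho\cdot 2n})$-close to $R_1+aY$ at time $2n$, so that the $aY$ term dominates the error.

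Next I would verify the two required properties. For the convergence $\norm{u_{0,n}-R_1(0)}_{H^2}\to 0$: at the endpoint where the time parameter is large, Theorem \ref{thm1} gives $\norm{u_a(t)-R_1(t)-aY(t)}_{H^2}\leq Ce^{-2\rho t}$, and $\norm{Y(t)}_{H^2}\leq Ce^{-\rho t}$, so $\norm{u_a(t)-R_1(t)}_{H^2}\leq C(e^{-2\rho t}+|a|e^{-\rho t})\to 0$; since $R_1(t)$ is just a phase rotation and translation of $\phi$, after applying the reverse symmetry transformation (which is an $H^2$-isometry) we get $u_{0,n}$ within $Ce^{-\rho n}$ of $R_1(0)$, which tends to $0$. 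For the lower bound at the far endpoint: here $u_n$ agrees with $u_a$ at a time $t_n\to\infty$ where the leading discrepancy from the soliton manifold is $aY(t_n)$. The key quantitative input is that $e^{\rho t}\norm{Y(t)}_{H^2}$ is periodic and nonzero, hence bounded below by some $\delta>0$; therefore $\norm{aY(t_n)}_{H^2}\geq |a|\delta e^{-\rho t_n}$, which strictly dominates the $Ce^{-2\rho t_n}$ error term once $t_n$ is large. The only subtlety is passing from this $H^2$ lower bound to the claimed $L^2$ lower bound on the distance to the \emph{orbit} $\{e^{i\theta}\phi(\cdot-y)\}$: I would argue that modulating out the two-parameter symmetry group can absorb at most the tangential part of $aY$, but $Y$ is genuinely transverse to the soliton manifold (it is built from the unstable eigenvector of $L_{\C}$ in assumption \eqref{A1}, whose real part $\rho>0$ forces $Y$ to lie outside the neutral/tangent directions), so a fixed fraction of $\norm{aY(t_n)}$ survives the infimum over $(\theta,y)$. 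Choosing $\eps$ to be, say, half of that surviving fraction times $|a|\delta$ — wait, this needs care since $e^{-\rho t_n}\to 0$; instead one rescales: the statement compares $u_n(T_n)$ to the orbit without any smallness, and because $u_n(T_n)$ is close to $R_1+aY$ at time $t_n$ which is $O(e^{-\rho t_n})$ from the soliton, one must instead run the argument so that $T_n$ corresponds to a \emph{bounded} time shift — so I would instead fix the final time and let only the initial time recede, i.e. take $u_n(t)=u_a(t+t_n)$ with $t_n\to\infty$ chosen so that $u_n(T_n)$ with $T_n$ fixed (or in a compact set) sees the order-one object $a e^{\rho t_n} Y(t_n)$ after renormalization. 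The cleanest route, which I would adopt, is the one in \cite{CoLe11}: pick $t_n\to\infty$, let $a=a_n\to 0$ with $a_n e^{\rho t_n}\to\infty$ (possible since we may rescale $a$), set $u_{0,n}$ to be $u_{a_n}$ at time $t_n$ pulled back by the symmetries, so $\norm{u_{0,n}-R_1(0)}_{H^2}\lesssim a_n e^{-\rho t_n}+e^{-2\rho t_n}\to 0$, while at a later time the solution has drifted a distance $\gtrsim a_n e^{\rho\cdot} \gg 1$ from the orbit, contradicting stability.

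The main obstacle I anticipate is precisely this last step: cleanly organizing the double limit (the perturbation size $a_n\to 0$ versus the time $t_n\to\infty$) so that the initial data converge to $R_1(0)$ while the solution provably exits a fixed $\eps$-neighborhood of the soliton orbit, and in doing so controlling the modulation parameters $(\theta,y)$ uniformly so that the transverse component of $a_nY$ is not killed by the infimum. This requires knowing that $Y$ does not degenerate onto the tangent space of the orbit at $R_1(0)$ under the flow — equivalently, that the unstable direction coming from \eqref{A1} is genuinely exponentially growing relative to the neutral modes — which is guaranteed by $\rho>0$ but must be invoked carefully. One also needs the continuity of the flow of \eqref{eq 1} in $H^2$ on the relevant time intervals (local well-posedness in $H^2$, cf. the references in the introduction) to make sense of $u_n$ and to propagate the estimates; this is standard but should be cited. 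Everything else — the time-reversal symmetry, the isometry property of phase/translation, and the periodicity-hence-nonvanishing of $e^{\rho t}\norm{Y(t)}_{H^2}$ from Theorem \ref{thm1} — is either symmetry bookkeeping or already established.
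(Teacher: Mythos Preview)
Your proposal circles the right ideas but never lands on a clean argument, and the detours you take (time reversal, a double limit in $a_n$ and $t_n$) are exactly what create the ``main obstacle'' you flag at the end. The paper's proof avoids all of this.

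The paper fixes a single $a\neq 0$ and works with the single solution $u=u_a$ from Theorem~\ref{thm1}. The lemma you are missing is a \emph{fixed-time} statement: there exist $\varepsilon>0$, $t_0\geq T_0$ and $M>0$ with
\[
\inf_{y,\theta}\norm{u(t_0)-e^{i\theta}\phi(\cdot-y)}_{L^2(B(0,M))}=\varepsilon>0,
\]
which follows because $aY(t_0)$ is transverse to the tangent space of the orbit at $R_1(t_0)$ for suitable $t_0$ (using that $e^{\rho t}Y(t)$ is nonzero periodic). With $t_0$ frozen, one then takes $S_n\to\infty$, sets $T_n=t_0-S_n$, and defines
\[
u_n(t,x)=u(t+S_n,\,x+c_1S_n)\,e^{-i\omega_1 S_n},
\]
which is again a solution of \eqref{eq 1} by phase/translation invariance --- no time reversal is used. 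Then $u_n(0,x)=u(S_n,x+c_1S_n)e^{-i\omega_1 S_n}=\phi(x)+O(e^{-\rho S_n})$ since $R_1(S_n,x+c_1S_n)e^{-i\omega_1 S_n}=\phi(x)$, giving the first claim. For the second, $u_n(T_n,x)=u(t_0,x+c_1S_n)e^{-i\omega_1 S_n}$, and because the orbit $\{e^{i\theta}\phi(\cdot-y)\}$ is invariant under phase and translation, the infimum over $(y,\theta)$ equals that for $u(t_0)$, hence is $\geq\varepsilon$.

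Two specific comments on your detours. First, for \eqref{eq 1} the time-reversal symmetry is $\overline{u(-t,-x)}$, not $\overline{u(-t,x)}$, because of the $u_x$ in the nonlinearity; in any case it is not needed here. Second, your final route via $a_n\to 0$ with $a_n e^{\rho t_n}\to\infty$ can be made to work in principle, but it requires uniformity of the constants in Theorem~\ref{thm1} with respect to $a$ and a separate argument that the amplified perturbation genuinely exits a fixed $L^2$-neighborhood of the orbit --- precisely the difficulty you identify. The paper sidesteps both by freezing $t_0$ and letting only $S_n$ run to infinity through the symmetry group.
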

Under the assumption of Theorem \ref{thm1}, we prove the existence of a one parameter family of multi-solitons. This implies that multi-soliton is not unique. Moreover, we prove instability for high relative speed of multi-solitons.

\begin{theorem}\label{thm2}
Let $K\in \N$, $K >1$. For each $j=1,...,K$, let $(\theta_j,x_j)\in\R^2$ and $(c_j,\omega_j)$ satisfy the condition of existence of soliton and $R_j$ be defined by \eqref{define of R_j}. Let $h_{*}=\inf{\inf_{j}h_j,2\alpha}$, where $\alpha$ is the given constant in Proposition \ref{pro25} and $v_{*}=\frac{1}{9}\min\{|c_j-c_k|:j,k=1,...,K,\ j\neq k\}$. Assume that \eqref{A1} holds. There exists $v_{\natural}>0$ such that if $v_{*}>v_{\natural}$ then the following holds. \\
There exist $Y(t)$ such that $\norm{Y(t)}_{H^2}\leq Ce^{-\rho t}$ and $e^{\rho t}\norm{Y(t)}_{H^2}$ is non-zero and periodic, where $\rho$ is given by \eqref{A1} and $Y(t)$ is a solution to the linearized flow around $R_1$. For all $a\in\R$, there exist $T_0\in\R$ large enough, a solution $u_a$ to \eqref{eq 1}, and a constant $C>0$ such that
\[
\left\lVert u_a(t)-\sum_{j=1}^KR_j(t)-aY(t)\right\rVert_{H^2}\leq Ce^{-2\rho t}.
\]  
\end{theorem}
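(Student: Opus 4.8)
We follow the proof of Theorem~\ref{thm1}, replacing the single soliton $R_1$ by the profile $R=\sum_{j=1}^{K}R_j$; the only genuinely new terms will be the interactions between the $R_j$'s, which the hypothesis $v_*>v_\natural$ forces to decay in time faster than $e^{-2\rho t}$, hence harmlessly. As in \cite{Tinpaper3} we first apply the gauge transformation $\mathcal G$ used there: it intertwines \eqref{eq 1} with a Schr\"odinger equation whose nonlinearity the $H^2$ Duhamel and Strichartz estimates of \cite{Tinpaper3} handle without loss of derivatives, it intertwines multi-soliton profiles with multi-soliton profiles of the gauged equation (up to constant phase shifts of the individual solitons, harmless and absorbable into the $\theta_j$), and $\mathcal G,\mathcal G^{-1}$ are smooth on bounded subsets of $H^2(\R)$. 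It therefore suffices to work with the gauged equation and pull back at the end; in particular we \emph{define} $Y$ as $D\mathcal G^{-1}$ at (the gauged) $R_1$ applied to the gauged eigenmode $\widetilde Y$ below, so that the properties of $Y$ in the statement, and the fact that it solves the linearized flow around $R_1$, are inherited from $\widetilde Y$. Since $L_\C=iH_{\omega_1,c_1}$, \eqref{A1} provides an eigenvalue $\lambda$ with $\rho=\Re\lambda>0$, which we take maximal among the positive real parts in the spectrum of the linearization around $R$ (by the spatial separation used below this maximum is attained by one soliton, which after relabelling we take to be $R_1$, so that no mode of the linearized flow around $R$ grows faster than $e^{\rho t}$ --- this is the implicit role of \eqref{A1} being imposed on $R_1$). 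If $\psi$ is an associated eigenfunction, elliptic regularity together with \eqref{estimate R_j} gives $\psi\in H^2$ with $\abs{\psi(x)}\lesssim e^{-\alpha\abs{x}}$, $\alpha$ as in Proposition~\ref{pro25}; taking, in the moving frame of $R_1$, the solution $\Re(e^{-\lambda t}\psi)$ of $\partial_t v+L_\C v=0$ and transporting it back yields $Y(t)$ with $\norm{Y(t)}_{H^2}\leq Ce^{-\rho t}$ and with $e^{\rho t}\norm{Y(t)}_{H^2}$ non-zero and periodic (constant if $\lambda$ is real).

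Fix $a\in\R$ and seek a solution of the form $u_a=R+aY+w$. Inserting this into the gauged \eqref{eq 1}, using that each $R_j$ solves it exactly and that $Y$ solves the linearized flow around $R_1$, one finds $\partial_t w+\mathcal L_R w=G_1(t)+G_2(t)+G_3(t,w)$, where $\mathcal L_R$ is the linearization around $R$ and: $G_1$ collects the soliton interactions (products $R_jR_k$ with $j\neq k$, with derivatives and higher powers) --- since at time $t$ the centers of $R_j$ and $R_k$ are at distance $\geq 9v_*t+O(1)$, \eqref{estimate R_j} gives $\norm{G_1(t)}_{H^2}\lesssim e^{-\beta v_* t}$ for some $\beta=\beta(h_*)>0$; $G_2$ collects $(\mathcal L_R-\mathcal L_{R_1})(aY)\sim a\big(\sum_{j\geq 2}R_j\big)Y$, again $\lesssim\abs{a}e^{-\beta v_* t}$, together with the terms quadratic and cubic in $aY$, of size $\lesssim a^2e^{-2\rho t}$; and $G_3$ collects everything containing $w$, so $\norm{G_3(t,w)}_{H^2}\lesssim\norm{w(t)}_{H^2}^2+\norm{w(t)}_{H^2}\big(\abs{a}e^{-\rho t}+e^{-\beta v_* t}\big)$. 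We solve for $w$ on $[T_0,\infty)$ by contraction in $X_K=\{w\in C([T_0,\infty);H^2):\norm{w(t)}_{H^2}\leq Ke^{-2\rho t}\ \forall t\geq T_0\}$ with the weighted sup norm, using the Duhamel formula for the propagator $\mathcal U(t,s)$ of $\partial_t+\mathcal L_R$ integrated from $+\infty$: $\Phi(w)(t)=\int_t^\infty\mathcal U(t,s)\big(G_1(s)+G_2(s)+G_3(s,w(s))\big)\,ds$.

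The main point is the exponential dichotomy of $\mathcal U$ on $H^2$. Because the solitons are mutually separated, $\mathcal L_R$ equals the direct sum of the linearizations around the individual $R_j$ (each in its own moving frame) plus an exponentially localized coupling, so the dichotomy of $\mathcal U$ reduces, up to a small perturbation, to the one around a single soliton already used in the proof of Theorem~\ref{thm1} (ultimately resting on the Grillakis--Shatah--Strauss spectral picture underlying Theorem~\ref{GrShSt}): a finite-dimensional unstable subspace on which, for $s\geq t$, $\norm{\mathcal U(t,s)}_{H^2\to H^2}\lesssim e^{\rho(s-t)}$; a stable subspace with $\norm{\mathcal U(t,s)}\lesssim e^{-\delta(s-t)}$ for some $\delta>0$; and a center subspace (the symmetry modes and the imaginary essential spectrum) with $\norm{\mathcal U(t,s)}\lesssim(1+\abs{s-t})^m$. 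Combining this with $\int_t^\infty\big(e^{\rho(s-t)}+(1+\abs{s-t})^m+e^{-\delta(s-t)}\big)e^{-\nu s}\,ds\lesssim e^{-\nu t}$ for $\nu>\rho$, the above bounds give $\norm{\Phi(w)(t)}_{H^2}\lesssim e^{-\beta v_* t}+a^2e^{-2\rho t}+\abs{a}Ke^{-3\rho t}+K^2e^{-4\rho t}$; choosing $v_\natural$ (depending on $\rho$ and $h_*$) so that $\beta v_*\geq 2\rho$ whenever $v_*>v_\natural$, then $K$ large, then $T_0$ large (all depending on $a$), makes $\Phi$ a self-map of $X_K$, and an analogous estimate on $\Phi(w_1)-\Phi(w_2)$ makes it a contraction. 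The fixed point gives $u_a=R+aY+w$ with $\norm{w(t)}_{H^2}\leq Ce^{-2\rho t}$ on the gauged side; undoing $\mathcal G$ and Taylor-expanding $\mathcal G^{-1}$ around $R$ (licit since $\mathcal G^{-1}$ is smooth on bounded $H^2$-sets and $aY+w$ is small) yields the desired solution of \eqref{eq 1} with $\norm{u_a(t)-\sum_{j=1}^{K}R_j(t)-aY(t)}_{H^2}\leq Ce^{-2\rho t}$.

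The step I expect to be the main obstacle is the dichotomy estimate for $\mathcal U$ on $H^2$: one must show that the center part of $\mathcal U(t,s)$ is genuinely sub-exponential on $H^2$ --- not merely controlled by the crude Gronwall bound $e^{C\abs{s-t}}$, which would be useless here since nothing forces $C<2\rho$ --- and this requires a real spectral decomposition of the linearized operator, a proof that the inter-soliton coupling is a perturbation that creates no new unstable directions, and the implicit hypothesis that $R_1$ carries the most unstable mode (equivalently, that $2\rho$ exceeds the linearized instability rate of every $R_j$). Granting this, the remainder is a routine adaptation of Theorem~\ref{thm1}, the high-relative-speed condition $v_*>v_\natural$ being exactly what renders the soliton interactions negligible at the $e^{-2\rho t}$ scale.
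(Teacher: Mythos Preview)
Your approach diverges from the paper's in a way that leaves a genuine gap, and the gap is exactly the one you flag yourself.

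The paper does \emph{not} use any exponential dichotomy for the linearized propagator $\mathcal U(t,s)$, neither in Theorem~\ref{thm1} nor here. Instead, the key device---which you omit---is the construction of an approximate profile $W^{N_0}$ of arbitrary order (Proposition~\ref{pro 22}): for every $N_0\in\N$ one builds $U_1^{N_0}=R_1+V_1^{N_0}$ with $V_1^{N_0}=aY+O(e^{-2\rho t})$ solving the equation up to an error $O(e^{-(N_0+1)\rho t})$. After the gauge transformation the equation becomes genuinely semilinear, and the fixed point is run with the \emph{free} Schr\"odinger propagator $S(t-s)$, which is unitary on $H^2\times H^2$. The Lipschitz constant $K$ of the gauged nonlinearity on the relevant ball is fixed, but the contraction factor is $K/((N_0+1)\rho)$, made $<\tfrac12$ simply by taking $N_0$ large. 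For Theorem~\ref{thm2} one sets $U_1^{N_0}+\sum_{j\ge2}R_j$ as profile; the extra interaction error is $O(e^{-h_*v_*t})$ by Lemma~\ref{lm1}, and one chooses $v_\natural$ so that $h_*v_*\ge(N_0+1)\rho$, reducing everything to the same contraction as in Theorem~\ref{thm1}.

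Your scheme, by contrast, uses only the first-order profile $R+aY$, so the source term is merely $O(e^{-2\rho t})$, and you are forced to extract the gain from the propagator itself. That requires the sub-exponential bound on the center part of $\mathcal U(t,s)$ in $H^2$, which is precisely what you identify as the obstacle---and it is a real one: nothing in the paper (or in Theorem~\ref{GrShSt}) gives such a bound, and your assertion that this dichotomy was ``already used in the proof of Theorem~\ref{thm1}'' is incorrect. The high-order profile trick is how the paper sidesteps the whole issue; without it, your fixed point does not close.
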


\begin{corollary}\label{corollary2}
Let $R$ be the multi-soliton profile defined by \eqref{profileofinfinitesoliton}. Under the hypotheses of Theorem \ref{thm2}, the multi-soliton around $R$ satisfies the following instability property. There exists $\varepsilon>0$, such that for all $n\in\N\setminus\{0\}$ and for $T>0$ large enough the following holds. There exist $I_n,J_n\in\R$, $J_n<I_n<-T$ and a solution $w_n\in C([J_n,I_n],H^2(R))$ to \eqref{eq 1} such that
\[
\lim_{n\rightarrow\infty}\norm{w_n(I_n)-R(I_n)}_{H^2}=0, \quad \text{ and } \inf_{\begin{matrix}y_j\in\R,\theta_j\in\R\\ j=1,...,K\end{matrix}}\left\lVert w_n(J_n)-\sum_{j=1}^K\phi_{j}(\cdot-y_j)e^{i\theta_j} \right\rVert_{L^2}\geq \varepsilon.
\] 
\end{corollary}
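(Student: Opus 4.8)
I would combine the one-parameter family of multi-solitons produced by Theorem \ref{thm2}, the time reversal symmetry of \eqref{eq 1}, and a conditional instability argument driven by the exponentially growing linearized mode around $R_1$.

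\emph{Reduction by reversibility.} One checks directly that \eqref{eq 1} is invariant under the involution $\Sigma\colon u(t,x)\mapsto\overline{u(-t,-x)}$; the derivative nonlinearity is the only delicate term, and the sign produced by $\partial_x$ cancels the one from complex conjugation. Since $\Phi_{\omega,c}$ is real and even, $\overline{\phi_{\omega,c}(-\cdot)}=e^{i\kappa_{\omega,c}}\phi_{\omega,c}$ for a constant phase $\kappa_{\omega,c}$, so $\Sigma$ sends any multi-soliton profile to a multi-soliton profile with the same speeds and frequencies, sends the set $\mathcal O:=\{\sum_{j=1}^{K}e^{i\theta_j}\phi_j(\cdot-y_j):\theta_j,y_j\in\R\}$ onto itself, is a fixed-time isometry of $H^2$ and of $L^2$, and conjugates the linearized flow around $R_1$ to that around $\Sigma R_1$. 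Put $\widetilde R:=\Sigma R$; it is again a multi-soliton profile with the same $v_*$, and \eqref{A1} still holds for $\widetilde R_1$, which is $R_{\omega_1,c_1}$ up to a phase. It therefore suffices to prove the following forward-in-time statement and then apply $\Sigma$: there is $\varepsilon>0$ such that for every large $t_0$ and every $n$ there are $b_n>t_0$ with $b_n\to\infty$ and a solution $v_n\in C([t_0,b_n],H^2)$ of \eqref{eq 1} with $\norm{v_n(t_0)-\widetilde R(t_0)}_{H^2}\to0$ and $\operatorname{dist}_{L^2}(v_n(b_n),\mathcal O)\geq\varepsilon$; one then takes $w_n:=\Sigma v_n$, $I_n:=-t_0$, $J_n:=-b_n$ and uses $\Sigma\widetilde R=R$ to recover the claim of Corollary \ref{corollary2}.

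\emph{The growing mode and its transversality.} By \eqref{A1}, $L_{\C}=iH_{\omega_1,c_1}$ has an eigenvalue $\lambda$ with $\Re\lambda=\rho>0$; selfadjointness of $H_{\omega_1,c_1}$ forces $-\overline\lambda$ to be an eigenvalue as well, with real part $-\rho<0$, so the linearized flow around $\widetilde R_1$ admits a real solution $Z(t)$ with $\norm{Z(t)}_{H^2}\asymp e^{\rho t}$ and $e^{-\rho t}\norm{Z(t)}_{H^2}$ nonzero and periodic (one may take $Z=\Sigma Y$ with $Y$ the decaying mode of Theorem \ref{thm2}). Since $Z(t)$ is exponentially localized near the first soliton and corresponds to a nonzero eigenvalue, it stays uniformly bounded away from the kernel directions $\Span\{i\phi_1,\phi_1'\}$; with \eqref{estimate R_j} and $v_*>v_{\natural}$ ensuring that the far-off solitons contribute only exponentially little to the tangent space $T_{\widetilde R(t)}\mathcal O$, this yields $\operatorname{dist}_{L^2}\!\big(Z(t),T_{\widetilde R(t)}\mathcal O\big)\geq c\,\norm{Z(t)}_{L^2}$ for all large $t$.

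\emph{Conditional instability and conclusion.} Fix $t_0$ large; for small $\eta>0$ let $v_\eta$ solve \eqref{eq 1} with $v_\eta(t_0)=\widetilde R(t_0)+\eta Z(t_0)$. Writing $v_\eta=\widetilde R+\eta Z+E$ and rerunning the estimates from the proof of Theorem \ref{thm2} — gauge transformation to control the derivative nonlinearity, Strichartz estimates, and \eqref{estimate R_j} with $v_*>v_{\natural}$ to see that the interaction error of the soliton chain and of $Z$ with the other bumps is $O(e^{-cv_*t})$ while the genuine nonlinear source is quadratic in $\eta Z+E$ — one closes a bootstrap showing that, as long as $\eta\norm{Z(t)}_{H^2}\leq\varepsilon_1$ for a fixed small $\varepsilon_1$,
\[
\norm{E(t)}_{H^2}\leq C\big(\eta\norm{Z(t)}_{H^2}\big)^2.
\]
Let $b_\eta$ be the first time with $\eta\norm{Z(b_\eta)}_{H^2}=\varepsilon_1$; since $\norm{Z(t)}_{H^2}\asymp e^{\rho t}$ one gets $b_\eta=\rho^{-1}\log(\varepsilon_1/\eta)+O(1)\to\infty$ as $\eta\to0$. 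At $t=b_\eta$ the displayed bound and the transversality estimate give $\operatorname{dist}_{L^2}(v_\eta(b_\eta),\mathcal O)\geq c\varepsilon_1-C\varepsilon_1^{2}\geq\varepsilon$ once $\varepsilon_1$ is small enough. Choosing $\eta=\eta_n\to0$ and setting $v_n:=v_{\eta_n}$, $b_n:=b_{\eta_n}$ gives $\norm{v_n(t_0)-\widetilde R(t_0)}_{H^2}=\eta_n\norm{Z(t_0)}_{H^2}\to0$ and $\operatorname{dist}_{L^2}(v_n(b_n),\mathcal O)\geq\varepsilon$, which is exactly the reduced statement.

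\emph{Main obstacle.} The heart of the argument is the bootstrap just described: it must be carried over $[t_0,b_\eta]$, whose length diverges as $\eta\to0$, and in the presence of the other $K-1$ solitons (which need not be stable), so the full nonlinear machinery of Theorem \ref{thm2} — gauge transformation, Strichartz estimates, and the smallness of inter-soliton interactions guaranteed by $v_*>v_{\natural}$ and \eqref{estimate R_j} — has to be reused. Establishing the transversality estimate of the second step, which converts "distance $\gtrsim\varepsilon_1$ from $\widetilde R$" into "distance $\gtrsim\varepsilon$ from the orbit $\mathcal O$", is the other technical point.
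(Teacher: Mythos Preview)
Your approach is genuinely different from the paper's, and the central bootstrap step does not go through as you describe it.

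\textbf{What the paper does.} The paper does \emph{not} run a forward-in-time instability bootstrap. It recycles Corollary~\ref{corollary1}: there one already has solutions $u_n$ on \emph{fixed-length} intervals $[T_n,0]$ with $u_n(0)\to R_1(0)$ and $u_n(T_n)$ bounded away from the single-soliton orbit. For the multi-soliton, one time-translates these to $\tilde u_n$ on $[I+T_n,I]$ with $I\to-\infty$, sets $w_n(I)=\tilde u_n(I)+\sum_{j\ge 2}R_j(I)$, and uses a Gr\"onwall estimate (through the gauge variables) on the interval $[I+T_n,I]$, whose length $|T_n|$ is fixed. Because the solitons are far apart for $I$ very negative, the interaction source is $\le\eta(I)\to 0$, and Gr\"onwall over a fixed-length interval only costs a constant $C_n$; hence $\|w_n-\tilde u_n-\sum_{j\ge2}R_j\|\le C_n\eta(I_n)\to 0$. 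A localisation argument then transfers the $\varepsilon$-separation of $u_n(T_n)$ from the $\phi_1$-orbit to separation of $w_n(J_n)$ from the full orbit $\mathcal O$.

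\textbf{Where your argument breaks.} Your time-reversal reduction and the Hamiltonian symmetry $\mathrm{Spec}(L_{\C^2})=-\mathrm{Spec}(L_{\C^2})$ (so that \eqref{A1} yields a genuinely growing mode $Z$) are fine. The gap is the bootstrap
\[
\norm{E(t)}_{H^2}\le C\bigl(\eta\norm{Z(t)}_{H^2}\bigr)^2,\qquad t\in[t_0,b_\eta],
\]
with $t_0$ \emph{fixed} and $\eta\to 0$. First, nothing in the proof of Theorem~\ref{thm2} gives this: that proof is a contraction for a solution decaying at $+\infty$, not a forward propagation estimate with an unstable mode present. Second, and more seriously, the multi-soliton interaction source is $O(e^{-h_\ast v_\ast t})$ and is \emph{independent of $\eta$}. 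Feeding it through the unstable direction over $[t_0,b_\eta]$ produces a contribution of order
\[
\int_{t_0}^{t} e^{\rho(t-s)}e^{-h_\ast v_\ast s}\,ds \ \gtrsim\ e^{\rho t}\,e^{-(\rho+h_\ast v_\ast)t_0},
\]
which at $t=b_\eta$ (where $e^{\rho b_\eta}\sim \varepsilon_1/\eta$) is $\gtrsim \varepsilon_1\,\eta^{-1}e^{-(\rho+h_\ast v_\ast)t_0}$ and blows up as $\eta\to 0$. Thus the inequality $\norm{E}\le C(\eta\norm{Z})^2$ cannot close for fixed $t_0$; allowing $t_0=t_0(\eta)\to\infty$ might rescue it, but then you must also check that $\eta\norm{Z(t_0(\eta))}\to 0$, and none of this is what you wrote. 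The paper sidesteps the whole issue by working on intervals of fixed length and sending them to $-\infty$, so that Gr\"onwall never sees an exponentially long time.
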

\begin{remark}
Replacing $\phi$ by $\overline{\phi}$ in the definition of $L_{\C}$, we obtain the new operator denoted by $L_{\C}^{\overline{\phi}}$. By similar argument in \cite[Proof of Corollary 2]{CoLe11}, we may prove that if $L_{\C}^{\overline{\phi}}$ has a eigenvalue with positive real part then the soliton $R_1$ (in Corollary \ref{corollary1}) and the multi-soliton $R$ (in Corollary \ref{corollary2}) are unstable forward in time. However, not like in \cite{CoLe11} for classical nonlinear Schr\"odinger equation, in our case \eqref{A1} does not imply that $L_{\C}^{\overline{\phi}}$ has an eigenvalue with positive real part.
\end{remark}

From \cite[Theorem 5.1]{GrShSt90}, if $d''(\omega_1,c_1)$ is non-singular and $n(H_{\omega_1,c_1})-p(d''(\omega_1,c_1))$ odd then $-iH_{\omega_1,c_1}$ has at least one pair of real non-zero eigenvalues $\pm\lambda$. In that case, \eqref{A1} holds. We have the following result. 
\begin{theorem}
Assume that $d''(\omega_1,c_1)$ is non-singular and $n(H_{\omega_1,c_1})-p(d''(\omega_1,c_1))$ odd. Then the conclusions of Corollary \ref{corollary1} and Corollary \ref{corollary2} hold. 
\end{theorem}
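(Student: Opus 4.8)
The plan is to show that the spectral assumption \eqref{A1} is automatically satisfied under the stated hypotheses, after which the two assertions are nothing but Corollary \ref{corollary1} and Corollary \ref{corollary2} applied verbatim. Indeed, as already noted in the paragraph preceding the statement, \cite[Theorem 5.1]{GrShSt90} guarantees that when $d''(\omega_1,c_1)$ is non-singular and $n(H_{\omega_1,c_1})-p(d''(\omega_1,c_1))$ is odd, the operator $-iH_{\omega_1,c_1}$ possesses at least one pair of real nonzero eigenvalues $\pm\lambda$. So the only real content of the proof is to feed this spectral information into the machinery already developed.

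Next I would convert this into a statement about $L_{\C}$. Using the identity $L_{\C}=iH_{\omega_1,c_1}$ recorded just after \eqref{define of L_C}, the spectrum of $L_{\C}$ is the negative of the spectrum of $-iH_{\omega_1,c_1}$: if $-iH_{\omega_1,c_1}\psi=\lambda\psi$, then multiplying by $i$ gives $H_{\omega_1,c_1}\psi=i\lambda\psi$, hence $L_{\C}\psi=iH_{\omega_1,c_1}\psi=-\lambda\psi$. Applying this to the eigenvalue $-\lambda$ of the pair produced above, one sees that $L_{\C}$ admits the real eigenvalue $\lambda$, for which $\rho:=\Re\lambda=\lambda>0$. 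This is exactly hypothesis \eqref{A1}.

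With \eqref{A1} in force, the conclusion of Corollary \ref{corollary1} follows at once, since its only hypothesis (inherited through Theorem \ref{thm1}) is precisely \eqref{A1}. For Corollary \ref{corollary2} one invokes Theorem \ref{thm2}, whose hypotheses are \eqref{A1} together with the high-relative-speed condition $v_{*}>v_{\natural}$; as \eqref{A1} has now been verified, the conclusion of Corollary \ref{corollary2} holds for multi-solitons whose component speeds are sufficiently separated. I do not expect any genuine obstacle here — all of the analytic work (construction of the growing direction $Y(t)$, the fixed-point/bootstrap argument on $[T_0,\infty)$, and the backward-in-time reformulation) is already contained in Theorems \ref{thm1}--\ref{thm2} and their corollaries. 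The only points warranting care are the sign and conjugation bookkeeping when passing from $-iH_{\omega_1,c_1}$ to $L_{\C}$, and making explicit that the relative-speed assumption of Theorem \ref{thm2} is still needed for the multi-soliton part of the statement.
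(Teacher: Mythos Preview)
Your proposal is correct and matches the paper's approach exactly: the paper gives no separate proof of this theorem, relying entirely on the sentence preceding it, which invokes \cite[Theorem 5.1]{GrShSt90} to produce a real nonzero eigenvalue pair $\pm\lambda$ for $-iH_{\omega_1,c_1}$ and asserts that \eqref{A1} then holds, after which Corollaries \ref{corollary1} and \ref{corollary2} apply directly. Your additional bookkeeping on the sign when passing from $-iH_{\omega_1,c_1}$ to $L_{\C}=iH_{\omega_1,c_1}$, and your remark that the relative-speed hypothesis of Theorem \ref{thm2} is implicitly needed for the multi-soliton part, are both accurate clarifications that the paper leaves tacit.
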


\begin{remark}
From the works of Colin-Ohta \cite{CoOh06}, Ohta \cite{Ohta14} and Hauashi \cite{Ha21}, we see that $p(d''(\omega,c))=1$ if $b=0$ or $b<0$ or $b>0$ and $-2\sqrt{\omega}<c<2\kappa \sqrt{\omega}$; $p(d''(\omega,c))=0$ if $b>0$ and $2\kappa \sqrt{\omega}<c<2\sqrt{\omega}$ for some constant $\kappa=\kappa(b)\in (0,1)$. We predict that $n(H_{\omega,c})=1$ for all $b$ and the condition $n(H_{\omega_1,c_1})-p(d''(\omega_1,c_1))$ odd is replaced by $p(d''(\omega_1,c_1))=0$. 
\end{remark}

\subsection{Instability of multi-solitons for \eqref{eq2}}
\label{sec2}
In this section, for simplicity, we use the same notation in Section \ref{sec1}. 

The flow of \eqref{eq2} in $H^1(\R)$ satisfies the following conservation laws.
\begin{align*}
\text{Energy} &\quad E(u):=\frac{1}{2}\norm{u_x}^2_{L^2}+\frac{1}{2(\sigma+1)}\Im\int_{\R}|u|^{2\sigma}u_x\overline{u}\,dx,\\
\text{Mass} &\quad Q(u):=\frac{1}{2}\norm{u}^2_{L^2},\\
\text{Momentum}&\quad P(u):=-\frac{1}{2}\Im\int_{\R}u_x\overline{u}\,dx.
\end{align*} 
For each $\omega,c\in\R$ and $u\in H^1(\R)$, we define
\[
S_{\omega,c}(u)=E(u)+\omega Q(u)+cP(u).
\]
A soliton of \eqref{eq2} is a solution of form $R_{\omega,c}(t,x)=e^{i\omega t}\phi_{\omega,c}(x-ct)$, for $\phi_{\omega,c}$ is a critical point of $S_{\omega,c}$. Moreover, $\phi_{\omega,c}$ is (up to phase shift and translation) of form
\[
\phi_{\omega,c}(x)=\Phi_{\omega}(x)\exp\left(\frac{ic}{2}x-\frac{1}{2\sigma+2}\int_{-\infty}^x\Phi_{\omega,c}^{2\sigma}(y)\,dy\right),
\]
where $\omega>\frac{c^2}{4}$ and
\[
\Phi_{\omega,c}^{2\sigma}(y)=\frac{(\sigma+1)(4\omega-c^2)}{2\sqrt{\omega}\left(\cosh(\sigma\sqrt{4\omega-c^2} y)-\frac{c}{2\sqrt{\omega}}\right)}.
\]
For each $\omega,c\in\R$, let $d(\omega,c),H_{\omega,c},n(H_{\omega,c}),p(d''(\omega,c))$ be defined as in Section \ref{sec1}. Similar in the case \eqref{eq 1}, Stability/instability of solitons of \eqref{A2} obeys Theorem \ref{GrShSt}. In \cite{LiSiSu13}, the authors proved that $n(H_{\omega,c})=1$ for all $\sigma>0$. Thus, $R_{\omega,c}$ is orbitally stable if $p(d''(\omega,c))=1$ and orbitally unstable if $p(d''(\omega,c))=0$. 

Let $K \in\N$, $K>1$. For each $1 \leq j \leq K$, let $(\theta_j,x_j)\in\R^2$ and $(c_j,\omega_j)$ satisfy the condition of existence of soliton. For each $j \in \{1,2,..,K\}$, we set
\begin{equation*}
R_j(t,x)=e^{i\theta_j}R_{\omega_j,c_j}(t,x-x_j).
\end{equation*}
We define for each $j$, $h_j=\sqrt{4\omega_j-c_j^2}$. As in \cite[Lemma 3.1]{Tinpaper4}, 
\[
|R_j(t,x)| \lesssim e^{-\frac{h_j}{2}|x-c_jt|}.
\]
The profile of a multi-soliton is a sum of the form:
\begin{equation}\label{profile multisoliton}
R =\sum_{j=1}^{K}R_j.
\end{equation}
Since \eqref{eq2} is invariant under phase shift and translation, we may assume that $\theta_1=x_1=0$. For convenience, we denote $\phi_{j}=\phi_{\omega_j,c_j}$ and $\phi=\phi_1$. By an elementary calculation, we see that the linearized operator around $R_1$ of \eqref{eq2} is the following.
\[
L_{\C}(v)=-iv_{xx}+i\omega_1 v-c_1v_x+v_x|\phi|^{2\sigma}+2\sigma\phi_x\Re(\phi\overline{v})|\phi|^{2(\sigma-1)}.
\]
We may check that $L_{\C}=iH_{\omega_1,c_1}$. We need the following assumption.
\begin{equation}
\tag{A2}
L_{\C} \text{has an eigenvalue } \lambda\in\C \text{ such that } \rho:=\Re\lambda>0. 
\label{A2}
\end{equation}
We have the following result.
\begin{theorem}\label{thm3}
Let $\sigma=1$ or $\sigma=2$ or $\sigma\geq \frac{5}{2}$. Under \eqref{A2}, $R_1$ is orbitally unstable by the same sense as in Corollary \ref{corollary1}.
\end{theorem}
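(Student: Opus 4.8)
The plan is to reproduce, for equation \eqref{eq2}, the two–step scheme already carried out for \eqref{eq 1}: first an analogue of Theorem \ref{thm1} producing a solution $u_a$ that shadows $R_1+aY(t)$ up to $O(e^{-2\rho t})$ in $H^2$, and then the soft compactness/continuity argument of Corollary \ref{corollary1}, run backward in time, to extract orbital instability. Since the deduction of Corollary \ref{corollary1} from Theorem \ref{thm1} uses only the \emph{form} of that statement (and not the specific equation), once the analogue of Theorem \ref{thm1} is available for \eqref{eq2} the instability conclusion follows verbatim; so the real content is the $H^2$ shadowing statement for \eqref{eq2}.

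The first step is to build the unstable direction. From \eqref{A2}, $L_{\C}$ has an eigenvalue $\lambda$ with $\rho=\Re\lambda>0$; let $\zeta$ be a corresponding $L^2$ eigenfunction. Since $L_{\C}=iH_{\omega_1,c_1}$ is a lower-order perturbation of $-i\partial_{xx}+i\omega_1$ whose coefficients are built from $\phi=\phi_1$ and its derivatives, and $\phi$ together with all its derivatives decays exponentially (by the explicit formula for $\Phi_{\omega,c}$), a bootstrap/elliptic-regularity argument shows $\zeta\in H^2$, in fact $\zeta$ is exponentially localized. Set $Y(t)=\Re\bigl(e^{-\lambda t}\zeta\bigr)$ in the real picture of the linearized system, so that $Y$ solves $\partial_tY+L_{\C}Y=0$, $\norm{Y(t)}_{H^2}\leq Ce^{-\rho t}$, and $e^{\rho t}\norm{Y(t)}_{H^2}$ is non-zero and periodic in $t$ (constant if $\Im\lambda=0$). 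This is exactly the point where the restriction $\sigma\in\{1,2\}\cup[\tfrac52,\infty)$ enters: for $\sigma=1,2$ the nonlinearity $|u|^{2\sigma}u_x$ is polynomial in $(u,\overline u)$, hence all coefficients are smooth; for $\sigma\geq\tfrac52$ the map $u\mapsto |u|^{2\sigma}$ is of class $C^2$ on a neighbourhood of the soliton in $H^1\cap W^{1,\infty}$, which is what is needed both for the coefficients of $L_{\C}$ to be $C^1$ and for the quadratic remainder below to be twice differentiable and controllable.

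The second step is the fixed-point construction. Write $u_a=R_1+aY+w$ and derive, as in the computation leading to $v_t+L_{\C}(v)+\mathcal{M}_{\C}(v)=0$ for \eqref{eq 1}, the equation $\partial_tw+L_{\C}w=-\mathcal{M}_{\C}(aY+w)$ for \eqref{eq2}, where now $\mathcal{M}_{\C}$ collects all terms of order $\geq 2$ in the perturbation $aY+w$ (these are the relevant quadratic/higher expansions of $v_x|\phi+v|^{2\sigma}$ and $(\phi+v)_x\Re((\phi+v)\overline{(aY+w)})|\phi+v|^{2(\sigma-1)}$). Imposing $w(t)\to 0$ as $t\to\infty$, recast this as the Duhamel equation $w(t)=\int_t^{\infty}e^{(s-t)L_{\C}}\,[\mathcal{M}_{\C}(aY+w)](s)\,ds$, splitting $e^{tL_{\C}}$ into its finite-dimensional unstable/central part (integrated backward from $+\infty$, where $aY$ decays, exactly as in \cite{CoLe11}) and its stable part (controlled by $\norm{e^{tL_{\C}}P_s}_{H^2\to H^2}$ with at most a polynomial-times-exponential bound). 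Since $\norm{aY(t)}_{H^2}\lesssim e^{-\rho t}$, the source term is $O(e^{-2\rho t})$, so a contraction in the weighted space $\{w:\ \sup_{t\geq T_0}e^{2\rho t}\norm{w(t)}_{H^2}<\infty\}$ for $T_0$ large gives a unique small $w$ with $\norm{w(t)}_{H^2}\leq Ce^{-2\rho t}$; local well-posedness of \eqref{eq2} in $H^2$ (via the gauge transform and Strichartz estimates, as in \cite{Tinpaper4,HaOz16,Santos15}) upgrades $u_a$ to a genuine solution on $[T_0,\infty)$. This furnishes the analogue of Theorem \ref{thm1}; feeding it into the argument of Corollary \ref{corollary1} (run $u_a$ backward: $e^{\rho t}\norm{aY(t)}_{H^2}$ is periodic and non-zero while $\norm{w(t)}_{H^2}\leq Ce^{-2\rho t}$, so for $t$ very negative $u_a(t)$ is $L^2$-far from the orbit $\{e^{i\theta}\phi(\cdot-y)\}$, while $u_a$ is $H^2$-close to $R_1$ for large positive $t$; re-indexing and time-translating yields $(T_n),(u_{0,n}),(u_n)$) completes the proof.

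The main obstacle is closing the fixed-point estimates in the presence of the derivative nonlinearity $|u|^{2\sigma}u_x$, which loses a derivative and prevents a naive iteration in $H^2$. This is precisely what the gauge transformation is for: as in Tin's construction of multi-solitons \cite{Tinpaper4}, one conjugates \eqref{eq2} by (a soliton-adapted variant of) $\exp\bigl(-\tfrac{1}{2\sigma+2}\int_{-\infty}^x |u|^{2\sigma}\bigr)$ to trade the worst derivative term for semilinear terms amenable to Strichartz estimates, runs the contraction in the gauged variables, and then transports the weighted-in-time $H^2$ bounds back through the (nonlinear) gauge map, checking it is a bi-Lipschitz homeomorphism on the relevant balls — this bookkeeping, together with verifying the exponential decay of the eigenfunction $\zeta$ when $\sigma$ is non-integer (the reason for $\sigma\geq\tfrac52$), is the delicate part; the rest is parallel to Section \ref{sec1}.
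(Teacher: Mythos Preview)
Your overall architecture is correct: one proves an analogue of Theorem \ref{thm1} for \eqref{eq2} and then repeats verbatim the time-translation/re-indexing argument of Corollary \ref{corollary1}. The paper does exactly this (it says so explicitly and omits the details). The difficulty is entirely in the first step, and there your proposed mechanism differs from the paper's and, as written, does not close.

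You propose a Lyapunov--Perron/stable-manifold iteration based on the linearized semigroup $e^{tL_{\C}}$ with a stable/unstable splitting, looking for $w=O(e^{-2\rho t})$. Two problems. First, this is \emph{not} what \cite{CoLe11} does: there (and here) one first builds a high-order approximate profile $W^{N_0}$ solving $\partial_tW^{N_0}+L_{\R^2}W^{N_0}=\mathcal{M}_{\R^2}(W^{N_0})+O(e^{-(N_0+1)\rho t})$ by iterated inversion of $L-\mu I$ (Proposition \ref{pro 22}), and only then runs a fixed point for the remainder at rate $(N_0+1)\rho$; the contraction constant is $K/((N_0+1)\rho)$, and one chooses $N_0$ large to make it $\leq\tfrac12$. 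With only the first-order profile $aY$ and weight $e^{2\rho t}$, the analogous constant is $K/(2\rho)$, and nothing forces this to be $<1$. Second, to salvage the semigroup route you would need $H^2\!\to\!H^2$ bounds on $e^{tL_{\C}}P_s$ for the dNLS linearization, which carries first-order coefficients $|\phi|^{2\sigma}\partial_x$; such bounds are not available off the shelf, and the gauge transform does not help here since it is nonlinear and does not conjugate $L_{\C}$ to a derivative-free operator. The paper's resolution is to do all the spectral work (eigenfunction, high-order profile) in the \emph{original} variables, then gauge-transform the full nonlinear equation to the semilinear system for $(\varphi,\psi)$, and run the contraction with the \emph{free} Schr\"odinger group $S(t)$ in $H^2\times H^2$; no linearized semigroup is ever exponentiated.

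One more correction: the restriction $\sigma\in\{1,2\}\cup[\tfrac52,\infty)$ is not about $C^2$-smoothness of $|u|^{2\sigma}u_x$ or about decay of the eigenfunction. It is precisely the condition under which the gauged nonlinearities $P(\varphi,\psi)$, $Q(\varphi,\psi)$ of the Remark following Theorem \ref{thm4} (which contain the factor $|\varphi|^{2(\sigma-2)}$ inside a nonlocal integral) are Lipschitz on bounded sets of $H^1\times H^1$; this Lipschitz property is what makes the free-propagator fixed point contract once $N_0$ is large.
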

Moreover, we have the following result.
\begin{theorem}\label{thm4}
Let $\sigma=1$ or $\sigma=2$ or $\sigma\geq \frac{5}{2}$. Let $R$ be the multi-solitons profile defined by \eqref{profile multisoliton}. Assume that \eqref{A2} holds. Then the multi-soliton around $R$ is unstable by the same sense as in Corollary \ref{corollary2}.
\end{theorem}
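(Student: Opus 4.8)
The proof follows the strategy behind Theorem~\ref{thm2} and Corollary~\ref{corollary2}, transposed to \eqref{eq2}; since Theorem~\ref{thm3} already supplies the gauge-transformed framework and the single-soliton case, the genuinely new point is to run the construction in the presence of $K$ interacting solitons. The plan is to (1) build the profile $Y$ from \eqref{A2}, (2) construct a one-parameter family $\{u_a\}_{a\in\R}$ of solutions of \eqref{eq2} on $[T_0,\infty)$ with $\norm{u_a(t)-R(t)-aY(t)}_{H^2}\leq Ce^{-2\rho t}$ for $t\geq T_0$ (this is the core new step; it parallels the multi-soliton constructions of \cite{Tinpaper4} and \cite{CoLe11}), and (3) deduce the instability of Corollary~\ref{corollary2} from this family. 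For step (1): complexify the $\R$-linear operator $L_\C=iH_{\omega_1,c_1}$ and, using \eqref{A2}, choose an eigenfunction $\zeta$ with $L_\C\zeta=\lambda\zeta$ and $\rho=\Re\lambda>0$; then $\zeta\in H^2(\R)$ decays exponentially, by the exponential decay of $\phi$ and elliptic regularity. Since $L_\C$ has real coefficients, $\bar\lambda$ is also an eigenvalue, with eigenfunction $\bar\zeta$, so $Y(t):=\tfrac{1}{2}\big(e^{-\lambda t}\zeta+e^{-\bar\lambda t}\bar\zeta\big)$ is a genuine real-frame solution of $\partial_t Y+L_\C Y=0$ with $\norm{Y(t)}_{H^2}\leq Ce^{-\rho t}$; moreover $e^{\rho t}Y(t)$ is nonzero and periodic --- constant when $\Im\lambda=0$, and, when $\Im\lambda\ne0$, a nonvanishing trigonometric combination of the linearly independent functions $\Re\zeta$ and $\Im\zeta$ --- hence $e^{\rho t}\norm{Y(t)}_{H^2}$ has the stated property.

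For step (2): write $u_a=R+aY+w$ and derive the equation for $w$, which in the moving frame of $R_1$ has the form $\partial_t w+L_\C w=\mathcal F(t,w)$, where $L_\C$ is the operator of Section~\ref{sec2} and $\mathcal F$ gathers the soliton interaction terms, the (exponentially small) difference between the linearizations around $R$ and around $R_1$ applied to $w$, the cross terms in $(R,aY)$, the quadratic-and-higher terms in $aY$, and the nonlinear terms in $w$. By the pointwise bounds $\abs{R_j(t,x)}\lesssim e^{-\frac{h_j}{2}\abs{x-c_jt}}$, the distinctness of the speeds $c_j$, and the exponential localization of $Y$ in the frame of $R_1$, every source term of $\mathcal F$ is $O(e^{-\delta t})$ for some $\delta>0$, the $(aY)$-quadratic part being $O(\abs{a}^2e^{-2\rho t})$. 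After the gauge transformation of \cite{Tinpaper4} removes $i\abs u^{2\sigma}u_x$, solve for $w$ by a fixed point in the space with norm $\sup_{t\geq T_0}e^{2\rho t}\big(\norm{w(t)}_{H^2}+S(t)\big)$, where $S(t)$ is an appropriate Strichartz norm of $w$ on $[t,\infty)$: integrate the Duhamel formula from $+\infty$ on the stable and neutral spectral subspaces of $L_\C$, the parameter $a$ being exactly the free component of the data along the unstable eigendirection (the one that $e^{-tL_\C}$ contracts forward) and carried entirely by $aY$; equivalently, run the compactness scheme of \cite{Tinpaper4}, solving \eqref{eq2} backward from $T_n\to\infty$ with data $R(T_n)+aY(T_n)$, proving the uniform bound $\norm{u_n(t)-R(t)-aY(t)}_{H^2}\leq Ce^{-2\rho t}$ on $[T_0,T_n]$ by a bootstrap combining Strichartz estimates with the exponential smallness of the source, and passing to a weak limit. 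Choosing $T_0$ large closes the estimates; the restriction $\sigma\in\{1,2\}\cup[\tfrac{5}{2},\infty)$ enters precisely here, to make $z\mapsto\abs z^{2\sigma}$ regular enough for the $H^2$ nonlinear and Strichartz estimates, as in \cite{Tinpaper4}.

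For step (3): when $a\ne0$ one has $\norm{u_a(t)-R(t)}_{H^2}\leq\abs{a}\norm{Y(t)}_{H^2}+Ce^{-2\rho t}\to0$ as $t\to+\infty$, so $u_a$ is itself a multi-soliton around $R$, and in particular multi-solitons are not unique. Extending $u_a$ backward by local well-posedness and arguing exactly as in the proof of Corollary~\ref{corollary2} (i.e.\ as in \cite{CoLe11}): on any interval $[\tau,T_0]$ on which $u_a$ stays in a small tube around the orbit $\mathcal O=\{\sum_{j=1}^K\phi_j(\cdot-y_j)e^{i\theta_j}\}$, the $L^2$-distance from $u_a(t)$ to $\mathcal O$ is at least $c\abs{a}e^{\rho\abs{t}}-C\abs{a}^2e^{2\rho\abs{t}}-Ce^{-2\rho t}$, because $Y$ is uniformly transverse to the tangent space of $\mathcal O$ (near the first soliton its profile is the eigenfunction $\zeta$ for $\lambda\ne0$, while the tangent directions of $\mathcal O$ lie in the generalized kernel of $L_\C$, associated with the eigenvalue $0$, and those of the other solitons are exponentially decoupled). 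Combining this lower bound with the autonomy of \eqref{eq2} and its phase and translation symmetries, one produces, for $a=a_n\to0$, times $J_n<I_n<-T$ and solutions $w_n$ (namely suitably translated and gauged copies of $u_{a_n}$) on $[J_n,I_n]$ such that $\norm{w_n(I_n)-R(I_n)}_{H^2}\to0$ and the $L^2$-distance of $w_n(J_n)$ to $\mathcal O$ is $\geq\varepsilon$ --- which is the conclusion of Corollary~\ref{corollary2}.

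The main obstacle is step (2): proving decay and dispersive estimates for the non-self-adjoint evolution generated by $L_\C$ on $[T_0,\infty)$ sharp enough to close the fixed point, while losing a derivative through $i\abs u^{2\sigma}u_x$. The two delicate points are the generalized kernel of $L_\C$ produced by the phase and the (non-standard momentum) translation symmetries, which must be correctly identified and either propagated through the Duhamel scheme or removed by modulation --- otherwise the polynomially growing neutral modes destroy the rate --- and running the gauge-plus-Strichartz machinery of \cite{Tinpaper4} at the $H^2$ level, which is exactly what forces $\sigma\in\{1,2\}\cup[\tfrac{5}{2},\infty)$. The transversality input used in step (3) is comparatively soft: an eigenfunction of $L_\C$ for a nonzero eigenvalue is automatically linearly independent of the generalized $0$-eigenspace that carries the tangent directions to $\mathcal O$.
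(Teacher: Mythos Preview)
Your outline is plausible but takes a substantially harder route than the paper and omits the one idea that makes the paper's argument close: the high-order approximation profile $W^{N_0}$ of Proposition~\ref{pro 22}. The paper does not do a fixed point around $R+aY$ with a source of size $O(e^{-2\rho t})$; it first upgrades $aY$ to a profile $W^{N_0}(t)=aY(t)+O(e^{-2\rho t})$ solving the linearized-plus-nonlinear equation up to an error $O(e^{-(N_0+1)\rho t})$, with $N_0$ arbitrary. Only then does it gauge-transform and set up the fixed point. After gauge the remainder equation is $L\tilde w=F(\tilde w+q)-F(q)-\widetilde{Err}$ with $L=i\partial_t+\partial_{xx}$ the \emph{free} Schr\"odinger operator, $F$ Lipschitz with constant $K$ on bounded sets of $H^2\times H^2$, and $\|\widetilde{Err}(t)\|\le C(N_0)e^{-(N_0+1)\rho t}$; choosing $N_0$ so that $K/((N_0+1)\rho)\le\tfrac12$ closes the contraction using only the isometry of $S(t)$ on $H^2$ --- no Strichartz, no spectral projections of $L_{\C}$, no modulation, no neutral modes. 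The obstacles you flag (dispersive estimates for $e^{-tL_{\C}}$, the generalized kernel, polynomially growing neutral directions) are genuine if one insists on the first-order profile $aY$, but the paper sidesteps them entirely. In particular, after the gauge transform there is no $L_{\C}$ left to spectrally decompose; your ``integrate Duhamel on the stable and neutral subspaces of $L_{\C}$'' does not match the structure of the transformed system.

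Your step (3) also departs from the paper. The paper does not bound the distance from $u_a(t)$ to the orbit $\mathcal O$ via transversality on a backward interval; it first establishes single-soliton instability (Corollary~\ref{corollary1}) by a time-translation trick --- Lemma~\ref{lm2} produces one fixed $t_0$ at which $u(t_0)$ is $\varepsilon$-away from the $\phi$-orbit on a ball, then $u_n(t)=u(t+S_n,\cdot+c_1S_n)e^{-i\omega_1S_n}$ with $S_n\to\infty$ gives the sequence --- and for Corollary~\ref{corollary2} it glues the remaining solitons onto this $u_n$, controls the difference by Gr\"onwall in the gauge variables, and runs a localization argument near each $R_j$. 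Finally, the paper's construction (Theorem~\ref{thm2}, hence implicitly Theorem~\ref{thm4}) requires the high relative speed hypothesis $v_*>v_{\natural}$ so that the interaction error $O(e^{-h_*v_*t})$ is dominated by $e^{-(N_0+1)\rho t}$; you do not impose it, and without it your soliton-interaction source need not decay faster than $e^{-2\rho t}$, which your fixed-point norm demands.
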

Using \cite[Theorem 5.1]{GrShSt90}, we have if $d''(\omega_1,c_1)$ is non-singular and $p(d''(\omega,c))=0$ then $-iH_{\omega_1,c_1}$ has onr pair of real non-zero eigenvalue $\pm\lambda$. In that case, \eqref{A2} holds. Thus, we have the following result.*
\begin{theorem}
Let $\sigma=1$ or $\sigma=2$ or $\sigma\geq \frac{5}{2}$ and $R_1$ be such that $d''(\omega_1,c_1)$ is non-singular and $p(d''(\omega_1,c_1))=0$ then the conclusions of Theorem \ref{thm3} and Theorem \ref{thm4} hold.
\end{theorem}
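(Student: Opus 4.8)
The plan is to obtain this statement as a direct consequence of the Grillakis--Shatah--Strauss linear instability criterion combined with Theorems~\ref{thm3} and \ref{thm4}; the only point requiring attention is that the spectral hypothesis \eqref{A2} follows from the assumptions on $d''$ and from the count of negative eigenvalues of $H_{\omega_1,c_1}$.

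First I would recall from \cite{LiSiSu13} that $n(H_{\omega,c})=1$ for every $\sigma>0$ and every admissible pair $(\omega,c)$, so in particular $n(H_{\omega_1,c_1})=1$. Since the hypothesis gives $p(d''(\omega_1,c_1))=0$, we obtain
\[
n(H_{\omega_1,c_1})-p(d''(\omega_1,c_1))=1,
\]
which is odd, while $d''(\omega_1,c_1)$ is assumed non-singular. Then \cite[Theorem~5.1]{GrShSt90} applies: its structural hypotheses (self-adjointness of $H_{\omega_1,c_1}$, exactly one negative eigenvalue, kernel spanned by the infinitesimal generators of the phase and translation symmetries, and the remainder of the spectrum positive and bounded away from $0$) are precisely the spectral picture established in \cite{LiSiSu13}. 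Its conclusion is that the operator generating the linearized flow around $R_1$, namely $-iH_{\omega_1,c_1}$, possesses a pair of real non-zero eigenvalues $\pm\lambda$ with $\lambda\in\R\setminus\{0\}$.

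Next I would convert this into \eqref{A2}. Using $L_{\C}=iH_{\omega_1,c_1}$, an eigenfunction with $-iH_{\omega_1,c_1}w=\nu w$ satisfies $L_{\C}w=iH_{\omega_1,c_1}w=i(i\nu)w=-\nu w$; hence the eigenvalues of $L_{\C}$ are the negatives of those of $-iH_{\omega_1,c_1}$, so $L_{\C}$ has the real eigenvalues $\mp\lambda$. Selecting the positive one gives an eigenvalue $\lambda$ of $L_{\C}$ with $\rho:=\Re\lambda=\lambda>0$, i.e.\ \eqref{A2} holds. With \eqref{A2} verified and $\sigma\in\{1,2\}\cup[\tfrac{5}{2},\infty)$, Theorem~\ref{thm3} yields the orbital instability of $R_1$ in the sense of Corollary~\ref{corollary1}, and Theorem~\ref{thm4} yields the instability of the multi-soliton around $R$ in the sense of Corollary~\ref{corollary2}, which is exactly the asserted conclusion.

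The proof is short because all the analytic content is already packaged in Theorems~\ref{thm3}--\ref{thm4} and in the abstract theory of \cite{GrShSt90}, so there is no genuine obstacle, only bookkeeping. The one place where care is needed is the translation between the operators $H_{\omega_1,c_1}$, $-iH_{\omega_1,c_1}$ and $L_{\C}$, together with the choice of sign that keeps the eigenvalue of $L_{\C}$ with \emph{positive} real part, and the verification that the abstract hypotheses of \cite[Theorem~5.1]{GrShSt90} are met in this setting, which is where \cite{LiSiSu13} enters.
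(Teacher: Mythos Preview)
Your proposal is correct and follows the same route as the paper: the paper's justification, given in the sentence immediately preceding the theorem, is precisely to invoke \cite[Theorem~5.1]{GrShSt90} together with $n(H_{\omega_1,c_1})=1$ from \cite{LiSiSu13} to produce a real eigenvalue pair $\pm\lambda$ for $-iH_{\omega_1,c_1}$, conclude that \eqref{A2} holds, and then apply Theorems~\ref{thm3} and \ref{thm4}. Your write-up simply makes explicit the sign bookkeeping between $-iH_{\omega_1,c_1}$ and $L_{\C}=iH_{\omega_1,c_1}$ and the verification of the structural hypotheses, which the paper leaves implicit.
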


\begin{remark}
Define 
\begin{align*}
\varphi(t,x)&=\exp\left(\frac{i}{2}\int_{-\infty}^x|u(t,y)|^2\,dy\right)u(t,x),\\
\psi&=\partial_x\varphi-\frac{i}{2}|\varphi|^{2\sigma}\varphi.
\end{align*}
From \cite[page 6]{Tinpaper4}, if $u$ solves \eqref{eq2} then $(\varphi,\psi)$ solves
\begin{align*}
L\varphi&=P(\varphi,\psi),\\
L\psi&=Q(\varphi,\psi),
\end{align*}
where $P,Q$ are defined by
\begin{align*}
P(\varphi,\psi)&=i\sigma |\varphi|^{2(\sigma-1)}\varphi^2\overline{\psi}-\sigma(\sigma-1)\varphi\int_{-\infty}^x|\varphi|^{2(\sigma-2)}\Im(\psi^2\overline{\varphi}^2)\,dy,\\
Q(\varphi,\psi)&=-i\sigma|\varphi|^{2(\sigma-1)}\psi^2\overline{\varphi}-\sigma(\sigma-1)\psi\int_{-\infty}^x|\varphi|^{2(\sigma-2)}\Im(\psi^2\overline{\varphi}^2)\,dy.
\end{align*}
Since \cite[Remark 1.2]{Tinpaper4}, the conditions $\sigma=1$ or $\sigma=2$ or $\sigma\geq \frac{5}{2}$ ensure that $P(\varphi,\psi)$ and $Q(\varphi,\psi)$ are Lipschitz continuous on bounded set of $H^1(\R) \times H^1(\R)$. This is important point in the proof of Theorem \ref{thm3}, \ref{thm4}.
\end{remark}

\begin{remark}
From the work of Liu-Simpson-Sulem \cite{LiSiSu13}, we have if $\sigma\geq 2$ or $\sigma \in (1,2)$ and $2z_0\sqrt{\omega}<c<2\sqrt{\omega}$ then $p(d''(\omega,c))=0$ and if $\sigma\in (0,1)$ or $\sigma\in (1,2)$ and $-2\sqrt{\omega}<c<2z_0\sqrt{\omega}$ then $p(d''(\omega,c))=1$.
\end{remark}
The proofs of Theorem \ref{thm3} and \ref{thm4} are similar the proofs of Corollary \ref{corollary1} and Corollary \ref{corollary2} respectively. In this paper, we admit this and we only focus on the proofs of the results in Section \ref{sec1}.


\section{Proof of main results}

As said above, we only prove the results in Section \ref{sec1}. The results in Section \ref{sec2} are proved by similar argument. 

\subsection{Construction of approximation profiles}
For convenience, we use the same notation as in \cite{CoLe11}. We identify $\C$ with $\R^2$ and use the notation $a+ib=\left(\begin{matrix} a \\ b \end{matrix}\right)$ $(a,b\in\R)$. Given $v\in\C$, we denote $v^+$ is its real part and $v^-$ is its imaginary part. To avoid confusion, we denote with an index whether we consider the operator with $\C$-, $\R^2$-, or $\C^2$-valued functions. 

Let $L_{\C}$ be defined by \eqref{define of L_C}. We define 
\begin{align*}
\mathcal{L}_{\C}(v)&=-iv_{xx}+2\Re(R_1 \overline{v})R_{1x}+|R_1|^2v_x\\
&\quad -ib(|R_1|^4v+4|R|^2R\Re(R\overline{v})),
\end{align*}
and the nonlinear operators
\begin{align*}
\mathcal{N}_{\C}(v)&=2\Re(R_1\overline{v})v_x+|v|^2R_{1x}+|v|^2v_x\\
&-ib\left((R_1+v)(4\Re(R_1\overline{v})^2+2|v|^2|R_1|^2+4|v|^2\Re(R_1\overline{v}+|v|^4)+4|R_1|^2v \Re(R_1\overline{v})\right)\\
\mathcal{M}_{\C}(v)&=e^{-i\omega_1 t}\mathcal{N}_{\C}(e^{i\omega_1 t}v)=2\Re(\phi\overline{v})v_x+|v|^2\phi_x+|v|^2v_x\\
&\quad -ib(\phi+v)(4\Re(\phi\overline{v})^2+2|v|^2|\phi|^2+4|v|^2\Re(\phi\overline{v})+|v|^4)-4ib|\phi|^2v\Re(\phi\overline{v}).
\end{align*}
We have 
\begin{align*}
&L_{\R^2}\begin{pmatrix}v^+\\v^-\end{pmatrix}\\
&=\begin{pmatrix} \Re(L_{\C}(v))\\ \Im(L_{\C}(v))\end{pmatrix}\\
&=\begin{pmatrix}
v^-_{xx}-\omega_1v^--c_1v^+_x+2(v^+\phi^+ +v^-\phi^-)\phi^+_x+|\phi|^2v_x^++b|\phi|^4v^-+4b|\phi|^2\phi^-(\phi^+v^+ + \phi^-v^-)\\
-v^+_{xx}+\omega_1v^+-c_1v^-_x+2(v^+\phi^+ + v^-\phi^-)\phi^-_x+|\phi|^2v^-_x-b|\phi|^4v^+-4b|\phi|^2\phi^+(\phi^+v^+ +\phi^-v^-)
\end{pmatrix} \\
&=
\begin{pmatrix}
-c_1\partial_x+2\phi^+_x\phi^- +|\phi|^2\partial_x+4b|\phi|^2\phi^+\phi^- & \partial_{xx}-\omega_1+2\phi^-\phi_x^++b|\phi|^4+4b|\phi|^2(\phi^-)^2\\ -\partial_{xx}+\omega_1+2\phi_x^-\phi^+-b|\phi|^4-4b|\phi|^2(\phi^+)^2& -c_1\partial_x+2\phi^-\phi_x^- +|\phi|^2\partial_x-4b|\phi|^2\phi^+\phi^-
\end{pmatrix} \begin{pmatrix}
v^+\\v^-
\end{pmatrix}
\end{align*}
We see that $L_{\R^2}$ is an $R$-linear operator on $H^2(\R,R^2)\rightarrow L^2(\R,\R^2)$. To have some eigenfunctions, we extend $L_{\R^2}$ to $L_{\C^2}: H^2(\R,\C^2)\rightarrow L^2(\R,\C^2)$, which is a $\C$-linear operator. 

Define $v=e^{\frac{ic_1}{2}x}\tilde{v}$ and $\phi=e^{\frac{ic_1}{2}x}\tilde{\phi}$. By an elementary calculation, we have
\begin{align*}
L_{\C}(v)&=L_{\C}(e^{\frac{ic_1}{2}x}\tilde{v})\\
&=e^{\frac{ic_1}{2}x}L_{\C}^{\tilde{\phi}}(\tilde{v}), 
\end{align*}
where
\begin{align*}
L_{\C}^{\tilde{\phi}}(\tilde{v})&=-i\tilde{v}_{xx}+i\left(\omega_1-\frac{c_1^2}{4}\right)\tilde{v}+2\Re(\tilde{\phi}\overline{\tilde{v}})\left(\frac{ic_1}{2}\tilde{\phi}-\tilde{\phi}_x-2ib|\tilde{\phi}|^2\tilde{\phi}\right)\\
&\quad +|\tilde{\phi}|^2\left(\frac{ic_1}{2}+\tilde{v}_x\right)-ib|\tilde{\phi}|^4\tilde{v}.
\end{align*}  
Thus, $L_{\C^2}^{\tilde{\phi}}$ equals to
\begin{align*}
\begin{pmatrix}
W_{1,1}+W_{1,2}\partial_x & \partial_{xx}-\left(w_1-\frac{c_1^2}{4}\right)+W_2 \\
-\partial_{xx}+\left(\omega_1-\frac{c_1^2}{4}\right)+W_3& W_{4,1}+W_{4,2}\partial_x,
\end{pmatrix}.
\end{align*}
where
\begin{align*}
W_{1,1}&=\tilde{\phi}^+\left(\frac{-c_1}{2}\tilde{\phi}^- +\tilde{\phi}_x^+ + 2b|\tilde{\phi}|^2\tilde{\phi}^-\right),\\
W_{1,2}&=|\tilde{\phi}|^2,\\
W_2&=\tilde{\phi}^-\left(-\frac{c_1}{2}\tilde{\phi}^- + \tilde{\phi}_x^++2b|\tilde{\phi}|^2\tilde{\phi}^-\right)-\frac{c_1}{2}|\tilde{\phi}|^2+b|\tilde{\phi}|^4\\
W_3&=\tilde{\phi}^+ \left(\frac{c_1}{2}\tilde{\phi}^+ + \tilde{\phi}_x^+ -2b|\tilde{\phi}|^2\tilde{\phi}^+\right)+\frac{c_1}{2}|\tilde{\phi}|^2-b|\tilde{\phi}|^4\\
W_{4,1}&=\tilde{\phi}^-\left(\frac{c_1}{2}\tilde{\phi}^+ + \partial_x\tilde{\phi}^- -2b|\tilde{\phi}|^2\tilde{\phi}^+\right)\\
W_{4,2}&=|\tilde{\phi}|^2.
\end{align*}
Thus, $W_{1,1},W_{1,2},W_2,W_3,W_{4,1},W_{4,2}$ are exponentially decaying at infinity.  \\
Moreover,
\begin{align*}
L_{\R^2}^{\tilde{\phi}}\begin{pmatrix}
\tilde{v}^+\\ \tilde{v}^- \end{pmatrix} 
&=\begin{pmatrix}
\Re(L_{\C}^{\tilde{\phi}}(\tilde{v}))\\ \Im(L_{\C}^{\tilde{\phi}}(\tilde{v}))\end{pmatrix}
=\begin{pmatrix}
\Re(e^{-\frac{ic_1}{2}x}L_{\C}^{\phi}(v))\\
\Im(e^{-\frac{ic_1}{2}x}L_{\C}^{\phi}(v))
\end{pmatrix}\\
&=\begin{pmatrix}
\cos\left(\frac{c_1}{2}x\right) & \sin\left(\frac{c_1}{2}x\right)\\
-\sin\left(\frac{c_1}{2}x\right)& \cos\left(\frac{c_1}{2}x\right) 
\end{pmatrix} \begin{pmatrix}
\Re(L_{\C}^{\phi}(v))\\ \Im(L_{\C}^{\phi}(v))
\end{pmatrix}\\
&=\begin{pmatrix}
\cos\left(\frac{c_1}{2}x\right) & \sin\left(\frac{c_1}{2}x\right)\\
-\sin\left(\frac{c_1}{2}x\right)& \cos\left(\frac{c_1}{2}x\right) 
\end{pmatrix} L_{\R^2}\begin{pmatrix}
v^+\\ v^-
\end{pmatrix}\\
&=\begin{pmatrix}
\cos\left(\frac{c_1}{2}x\right) & \sin\left(\frac{c_1}{2}x\right)\\
-\sin\left(\frac{c_1}{2}x\right)& \cos\left(\frac{c_1}{2}x\right) 
\end{pmatrix} L_{\R^2}\begin{pmatrix}
\cos\left(\frac{c_1}{2}x\right) & \sin\left(\frac{c_1}{2}x\right)\\
-\sin\left(\frac{c_1}{2}x\right)& \cos\left(\frac{c_1}{2}x\right) 
\end{pmatrix}^{-1}\begin{pmatrix}
\tilde{v}^+\\ \tilde{v}^-
\end{pmatrix}.
\end{align*}
This implies that the spectrum set and the resolvent set of $L_{\C}$ ($L_{\C^2}$) are same to the spectrum set and the resolvent set of $L_{\C}^{\tilde{\phi}}$ ($L_{\C^2}^{\tilde{\phi}}$). 

Let $\alpha>0$ be the decay rate given by Proposition \ref{pro25} for eigenfunctions of $L$ with eigenvalue $\lambda$ (see \eqref{A1}). Taking a small value of $\alpha$, we assume that $\alpha\in \left(0,\frac{h_1}{2}\right)$, where $h_1=\sqrt{4\omega_1-c_1^2}$. For $\mathbb{K}=\R,\R^2,\C$ or $\C^2$, denote
\begin{equation}\label{define of H(K)}
\mathcal{H}(\mathbb{K})=\{v\in H^{\infty}(\R,\mathbb{K}) \vert e^{\alpha |x|} \vert \partial_x^a v\vert \in L^{\infty}(\R) \text{ for any }a \in \N\}.
\end{equation}
We have the following properties of $L_{\C^2}$.
\begin{proposition}\label{proposition 19}
\begin{itemize}
\item[(i)] The eigenvalue $\lambda=\rho+i\theta$ can be chosen with maximal real part. We denote $Z(x)=\begin{pmatrix}Z^+(x)\\Z^-(x)\end{pmatrix} \in H^2(\R,\C)$, an associated eigenfunction.
\item[(ii)] $\phi\in\mathcal{H}(\R^2)$ and $Z\in\mathcal{H}(\C^2)$.
\item[(iii)] Let $\mu\notin Sp(L_{\R^2})$ and $A\in\mathcal{H}(\C^2)$. There exists a solution $X\in \mathcal{H}(\C^2)$ to $(L-\mu I)X=A$ and $(L-\mu I)^{-1}$ is a continuous operator on $\mathcal{H}(\C^2)$. 
\end{itemize}
\end{proposition}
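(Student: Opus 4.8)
The plan is to treat the three items separately, leaning on the structural facts already assembled above: that $L_{\C}$ differs from $L_{\C}^{\tilde\phi}$ only by the explicit rotation matrix (so their spectra and resolvent sets coincide), that $L_{\C^2}^{\tilde\phi}$ has the block form with coefficients $W_{1,1},W_{1,2},W_2,W_3,W_{4,1},W_{4,2}$ which are exponentially decaying, and that $\phi$ itself decays like $e^{-h_1|x|/2}$ by \eqref{estimate R_j}. For item (i), since $L_{\C}$ is a relatively compact perturbation of the constant-coefficient operator $L_0$ (the principal part $-i\partial_{xx}$ plus the $\omega_1$, $c_1\partial_x$ terms, whose symbol I can compute explicitly), its essential spectrum is that of $L_0$, a curve in $\C$, and I would check that this curve lies in the closed left half-plane (or at least that $\{\Re\lambda\geq\rho_0\}$ meets the spectrum only in isolated eigenvalues of finite multiplicity for some $\rho_0>0$); assumption \eqref{A1} guarantees at least one eigenvalue with positive real part, and since eigenvalues in $\{\Re\lambda>0\}$ form a discrete set that is moreover bounded (the resolvent is norm-convergent to $0$ as $|\lambda|\to\infty$ in a half-plane, by a Neumann-series argument on $L_0^{-1}$ times the decaying perturbation), the supremum of the real parts is attained. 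Fix such a $\lambda=\rho+i\theta$ and call $Z$ a corresponding eigenfunction in $H^2(\R,\C)$ (identifying $\C$ with $\R^2$, this is a $\C^2$-valued eigenfunction of $L_{\C^2}$).

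For item (ii), the statement $\phi\in\mathcal H(\R^2)$ is the assertion that $\phi$ and all its derivatives decay like $e^{-\alpha|x|}$ in $L^\infty$; this follows from the explicit formula \eqref{formula Phi} for $\Phi_{\omega_1,c_1}$ — it is a smooth function decaying exponentially at rate $h_1>2\alpha$, and its phase factor is bounded with bounded derivatives, so repeated differentiation (using that $\Phi$ solves a second-order ODE with exponentially decaying right-hand side) keeps the exponential decay. For $Z\in\mathcal H(\C^2)$, I would invoke Proposition \ref{pro25} (quoted above as the source of the decay rate $\alpha$): it provides that eigenfunctions of $L$ associated to $\lambda$ decay at rate $\alpha$ together with all derivatives. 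Concretely, $Z$ solves the elliptic system $L_{\C^2}^{\tilde\phi}$ (up to the rotation) applied to $Z$ equals $\lambda Z$; bootstrapping with elliptic regularity gives $Z\in H^\infty$, and the exponential decay of $Z$ and its derivatives comes from writing the system as a perturbation of the constant-coefficient resolvent equation $(L_0-\lambda)Z=-(\text{decaying potential})Z$ and running a Gronwall/ODE-asymptotics argument, exactly as encapsulated in Proposition \ref{pro25}. The choice $\alpha<h_1/2$ is what makes the potential's decay dominate.

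Item (iii) is the workhorse. Given $\mu\notin Sp(L_{\R^2})=Sp(L_{\C^2})$ and $A\in\mathcal H(\C^2)$, existence of $X\in H^2$ solving $(L-\mu I)X=A$ is immediate from $\mu$ being in the resolvent set; the point is the \emph{exponential decay with all derivatives}, i.e.\ $X\in\mathcal H(\C^2)$, plus continuity of $(L-\mu)^{-1}$ on $\mathcal H(\C^2)$. I would rewrite $(L-\mu)X=A$ as $(L_0-\mu)X = A - VX$ where $V$ is the matrix of exponentially decaying coefficients $W_{i,j}$ (times $\partial_x$ where appropriate). The constant-coefficient operator $L_0-\mu$ can be inverted by an explicit Green's function / Fourier multiplier whose kernel decays exponentially at a rate governed by the distance from $\mu$ to $Sp(L_0)$; convolving a function in the weighted space $e^{\alpha|x|}L^\infty$ against such a kernel lands back in $e^{\alpha|x|}L^\infty$ provided $\alpha$ is smaller than that decay rate, which I can arrange by shrinking $\alpha$. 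Then a fixed-point / Neumann-series argument in the Banach space $\mathcal H(\C^2)$ (using that $V$ has small norm at infinity, or iterating on large-$|x|$ regions and patching with the already-known $H^2$ solution on compact sets) produces $X\in\mathcal H$, and the same estimates give boundedness of the solution operator. Derivative decay follows by differentiating the equation and bootstrapping, since $L_0$ is elliptic and $V$ has all derivatives decaying.

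The main obstacle I anticipate is item (iii), specifically getting the weighted estimate for the constant-coefficient resolvent $(L_0-\mu)^{-1}$: because $L_0$ is a first-order-in-$\partial_x$ perturbation of $-i\partial_{xx}$ and is not self-adjoint, one must control the location of the roots of its symbol (a quadratic in the Fourier variable with complex coefficients depending on $\mu$) and verify that for $\mu$ off the spectrum these roots stay a definite distance off the real axis, uniformly enough to produce an exponentially decaying Green's function whose rate exceeds $\alpha$. Handling the $W_{i,2}\partial_x$ terms — first-order perturbations — in the fixed-point argument requires that the resolvent kernel gain one derivative, which it does, but bookkeeping the norms carefully is the delicate part. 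Everything else is elliptic bootstrapping and ODE asymptotics of the kind already used to prove Proposition \ref{pro25}.
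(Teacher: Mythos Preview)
Your proposal is correct and follows essentially the same route as the paper: reduce to $L_{\C^2}^{\tilde\phi}$ via the rotation, use the compact-perturbation description of the essential spectrum for (i), the explicit decay of $\phi$ plus Proposition~\ref{pro25}(i) for (ii), and Proposition~\ref{pro25}(ii) for (iii). The paper's proof is in fact more terse than yours, simply invoking Proposition~\ref{pro25} rather than sketching its content.

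One remark worth making: the ``main obstacle'' you flag in (iii) --- controlling the roots of the symbol of the non-self-adjoint $L_0$ and handling the first-order $W_{i,2}\partial_x$ terms --- is exactly what the paper dissolves by a further algebraic step inside the proof of Proposition~\ref{pro25}. After passing to $L_{\C^2}^{\tilde\phi}$ (which already kills the $c_1\partial_x$ term), the paper conjugates by $P=\begin{pmatrix}1&i\\1&-i\end{pmatrix}$ and multiplies by $i$ to obtain $L'=iPLP^{-1}$, whose principal part is the \emph{diagonal} operator $\mathrm{diag}(\partial_{xx}-h_1^2/4,\,-\partial_{xx}+h_1^2/4)$. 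The constant-coefficient resolvent is then just a pair of scalar Helmholtz Green's functions $g_{\mu_1},g_{\mu_2}$ with explicit exponential decay (Lemma~\ref{lemma bounded of fundamental solution}), and the perturbation is a zeroth/first-order matrix with $\mathcal H(\C)$ coefficients. Your Neumann-series / weighted-space picture is the right intuition, but the diagonalization makes the bookkeeping you worry about essentially trivial: no quadratic symbol with complex coefficients needs to be analyzed, and the derivative gain from the Helmholtz kernel handles the $\partial_x$ terms directly.
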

Since $L_{\C^2}$ and $L_{\C^2}^{\tilde{\phi}}$ are conjugates of each other, we only need to prove Proposition \ref{proposition 19} for $L_{\C^2}^{\tilde{\phi}}$. 
\begin{proof}
(i) We see that if $\lambda$ is an eigenvalue of $L_{\C}$ with eigenfunction $v$ then $\lambda$ is an eigenvalue of $L_{\C^2}$ with eigenfunction $\begin{pmatrix}v\\ -iv \end{pmatrix}$. Thus, from \eqref{A1}, there exists an eigenvalue of $L_{\C^2}^{\tilde{\phi}}$ with positive real part. Since $L_{\C^2}^{\tilde{\phi}}$ is a compact perturbation of $\begin{pmatrix}0&\partial_{xx}-\frac{h_1^2}{4}\\-\partial_{xx}+\frac{h_1^2}{4}&0\end{pmatrix}$, the essential spectrum of $L_{\C^2}^{\tilde{\phi}}$ is the set $\left\{iy: y\in\R,|y|\geq \frac{h_1^2}{4}\right\}$ and there exists an eigenvalue $\lambda$ with maximal real part.\\
(ii) It is well known that $\phi$ and its derivative are exponentially decay with decay rate $\frac{h_1}{2}$. Combining with the fact that $\phi$ solves an elliptic equation, we have $\phi\in \mathcal{H}(\R^2)$. Since $L_{\C^2}^{\tilde{\phi}}Z=\lambda Z$, using Proposition \ref{pro25} (i), we have $Z\in \mathcal{H}(\C^2)$.\\
(iii) This part follows from Proposition \ref{pro25} (ii).   
\end{proof}

We need the following definition.
\begin{definition}
Let $\xi\in C^{\infty}(\R^+,H^{\infty}(\R))$ and $\chi:\R^+\rightarrow (0,\infty)$. Then we denote
\[
\xi(t)=O(\chi(t)) \quad \text{ as } t\rightarrow\infty,
\]
if, for all $s\geq 0$, there exists $C(s)>0$ such that 
\[
\forall t\geq 0, \quad \norm{\xi(t)}_{H^s}\leq C(s)\chi(t).
\]
\end{definition}
Define $Y_1:=\begin{pmatrix}\Re(Z)=\begin{pmatrix}
\Re(Z^+)\\ \Re(Z^-)
\end{pmatrix}\end{pmatrix}$ and $Y_2:=\Im(Z)=\begin{pmatrix}
\begin{pmatrix}
\Im(Z^+)\\ \Im(Z^-)
\end{pmatrix}
\end{pmatrix}$. Then $Y_1,Y_2\in \mathcal{H}(\R^2)$, and
\begin{equation}\label{properties Y_i}
\begin{cases}
L_{\R^2}Y_1=\rho Y_1-\theta Y_2,\\
L_{\R^2}Y_2=\theta Y_1+\rho Y_2.
\end{cases}
\end{equation}
Denote
\begin{equation}\label{define Y}
Y(t)=e^{-\rho t}(\cos(\theta t)Y_1+\sin(\theta t)Y_2).
\end{equation}
\begin{lemma}
The function $Y(t)$ solves the following equation.
\[
\partial_tY+L_{\R^2}Y=0.
\]
\end{lemma}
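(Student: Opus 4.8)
The plan is to verify the claimed identity $\partial_t Y + L_{\R^2} Y = 0$ by direct computation, using the definition \eqref{define Y} of $Y(t)$ together with the eigenrelations \eqref{properties Y_i} satisfied by $Y_1$ and $Y_2$. Since $L_{\R^2}$ is a time-independent linear operator, it commutes with the scalar time-dependent coefficients $e^{-\rho t}\cos(\theta t)$ and $e^{-\rho t}\sin(\theta t)$, so the two terms can be computed separately and combined.

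First I would compute $\partial_t Y$. Differentiating \eqref{define Y} in $t$ gives
\[
\partial_t Y = -\rho e^{-\rho t}\bigl(\cos(\theta t) Y_1 + \sin(\theta t) Y_2\bigr) + e^{-\rho t}\bigl(-\theta \sin(\theta t) Y_1 + \theta \cos(\theta t) Y_2\bigr).
\]
Next I would compute $L_{\R^2} Y$ using linearity of $L_{\R^2}$ and \eqref{properties Y_i}:
\[
L_{\R^2} Y = e^{-\rho t}\cos(\theta t)\bigl(\rho Y_1 - \theta Y_2\bigr) + e^{-\rho t}\sin(\theta t)\bigl(\theta Y_1 + \rho Y_2\bigr).
\]
Then I would add the two expressions and collect the coefficients of $Y_1$ and of $Y_2$ separately. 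For $Y_1$ the coefficient is $e^{-\rho t}\bigl(-\rho\cos(\theta t) - \theta\sin(\theta t) + \rho\cos(\theta t) + \theta\sin(\theta t)\bigr) = 0$, and for $Y_2$ it is $e^{-\rho t}\bigl(-\rho\sin(\theta t) + \theta\cos(\theta t) - \theta\cos(\theta t) + \rho\sin(\theta t)\bigr) = 0$. Hence $\partial_t Y + L_{\R^2} Y = 0$, as claimed.

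There is essentially no obstacle here: the computation is a routine cancellation once \eqref{properties Y_i} is in hand, and the only point worth noting is that $Y_1, Y_2 \in \mathcal{H}(\R^2) \subset H^\infty(\R,\R^2)$ (by Proposition \ref{proposition 19}(ii), since they are the real and imaginary parts of $Z$), so that all operations — in particular applying $L_{\R^2}$ and differentiating in $t$ — are justified in the relevant function spaces and $Y(t)$ genuinely lies in $C^\infty(\R^+, H^\infty(\R,\R^2))$. The decay estimate $\|Y(t)\|_{H^2} \le C e^{-\rho t}$ and the non-triviality/periodicity of $e^{\rho t}\|Y(t)\|_{H^2}$ (used later) also follow immediately from \eqref{define Y}, since $\cos(\theta t)Y_1 + \sin(\theta t)Y_2$ is periodic in $t$ and not identically zero (as $Z \ne 0$).
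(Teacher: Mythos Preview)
Your proof is correct and follows exactly the approach indicated in the paper: the authors simply state that the result follows from \eqref{properties Y_i} and \eqref{define Y}, referring to \cite[Lemma 21]{CoLe11} for details, and your direct computation is precisely that detailed verification.
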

\begin{proof}
The desired result follows from \eqref{properties Y_i} and the definition of $Y$ \eqref{define Y}. For detail proof, we refer reader to \cite[Lemma 21]{CoLe11}.  
\end{proof}

\begin{proposition}\label{pro 22}
Let $N_0\in\N$ and $a\in\R$. Then there exists a profile $W^{N_0} \in C^{\infty}([0,\infty),\mathcal{H}(\R^2))$, such that 
\[
\partial_t W^{N_0}+L_{\R^2}W^{N_0}=\mathcal{M}_{\R^2}(W^{N_0})+O(e^{-\rho (N_0+1)t}),
\]
as $t\rightarrow\infty$ and $W^{N_0}(t)=aY(t)+O(e^{-2\rho t})$. 
\end{proposition}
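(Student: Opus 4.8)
The plan is to construct $W^{N_0}$ as a finite sum of terms that decay at successively faster exponential rates, in the spirit of the approximate-solution scheme in \cite[Proposition 22]{CoLe11}. We seek $W^{N_0}$ in the form
\[
W^{N_0}(t)=\sum_{\substack{1\leq k\leq N_0\\ k\leq \ell\leq N_k}} e^{-k\rho t}\,\big(\cos(\ell'\theta t)A_{k,\ell}+\sin(\ell'\theta t)B_{k,\ell}\big),
\]
more precisely a polynomial in the variables $e^{-\rho t}e^{\pm i\theta t}$ with coefficients in $\mathcal{H}(\R^2)$, truncated so that the lowest-order term ($k=1$) is exactly $aY(t)=ae^{-\rho t}(\cos(\theta t)Y_1+\sin(\theta t)Y_2)$. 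First I would set $W^{(1)}(t):=aY(t)$, which by the Lemma satisfies $\partial_t W^{(1)}+L_{\R^2}W^{(1)}=0$; since $\mathcal{M}_{\R^2}$ is (at least) quadratic in its argument and $W^{(1)}=O(e^{-\rho t})$, we get $\partial_t W^{(1)}+L_{\R^2}W^{(1)}-\mathcal{M}_{\R^2}(W^{(1)})=-\mathcal{M}_{\R^2}(W^{(1)})=O(e^{-2\rho t})$, which already gives the case $N_0=1$.

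Then I would proceed by induction on the order. Suppose $W^{(m)}\in C^\infty([0,\infty),\mathcal{H}(\R^2))$ has been built with $\partial_t W^{(m)}+L_{\R^2}W^{(m)}-\mathcal{M}_{\R^2}(W^{(m)})=:-E_m=O(e^{-\rho(m+1)t})$ and $W^{(m)}=aY(t)+O(e^{-2\rho t})$. Because $W^{(m)}$ is a trigonometric polynomial in $t$ times $e^{-\rho\,(\cdot)t}$ with $\mathcal{H}(\R^2)$ coefficients, and $\mathcal{M}_{\R^2}$ is polynomial in $v,v_x$ with coefficients in $\mathcal{H}(\R^2)$ (coming from $\phi$ and its derivatives), the error $E_m$ is itself a finite sum $\sum_j e^{-\rho(m+1)t}(\cos(\theta_j t)P_j+\sin(\theta_j t)Q_j)$ with $P_j,Q_j\in\mathcal{H}(\R^2)$ and frequencies $\theta_j$ that are integer multiples of $\theta$. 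I would look for a correction $W^{(m+1)}=W^{(m)}+Z_{m+1}$ with $Z_{m+1}=\sum_j e^{-\rho(m+1)t}(\cos(\theta_j t)a_j+\sin(\theta_j t)b_j)$ so that the linear part cancels $E_m$ to leading order. Plugging in, the pair $(a_j,b_j)$ must solve the $2\times 2$-block system
\[
\begin{pmatrix} L_{\R^2}-(m+1)\rho & \theta_j \\ -\theta_j & L_{\R^2}-(m+1)\rho\end{pmatrix}\begin{pmatrix}a_j\\ b_j\end{pmatrix}=\begin{pmatrix}P_j\\ Q_j\end{pmatrix},
\]
equivalently $\big(L_{\C^2}-((m+1)\rho+i\theta_j)\big)(a_j+i b_j)=P_j+iQ_j$ after the usual complexification. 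Here I invoke Proposition \ref{proposition 19}(iii): provided $\mu_j:=(m+1)\rho+i\theta_j\notin \mathrm{Sp}(L_{\R^2})$, this equation has a solution in $\mathcal{H}(\C^2)$, and taking real and imaginary parts gives $a_j,b_j\in\mathcal{H}(\R^2)$. Since $\lambda=\rho+i\theta$ is the eigenvalue of maximal real part and the essential spectrum lies on the imaginary axis, $\mathrm{Sp}(L_{\R^2})$ meets $\{\Re z=(m+1)\rho\}$ only if there happens to be an eigenvalue there; one handles this by, if necessary, slightly perturbing the decay exponents (replacing $\rho$ by a nearby $\rho'<\rho$ with $(m+1)\rho'+i\theta_j$ in the resolvent set for all the finitely many relevant $j$ and all $m\leq N_0$), which does not affect the stated conclusion since $O(e^{-\rho' (N_0+1)t})$ absorbs into the error bound for $\rho'$ close enough to $\rho$ — or, more cleanly, by noting that only finitely many exceptional values occur and they can be avoided. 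After this cancellation, the new error $\partial_t W^{(m+1)}+L_{\R^2}W^{(m+1)}-\mathcal{M}_{\R^2}(W^{(m+1)})$ consists of: the old error minus the part we cancelled, which is now $O(e^{-\rho(m+2)t})$ by construction (the residual from higher Taylor terms in the $t$-polynomial structure), plus $\mathcal{M}_{\R^2}(W^{(m+1)})-\mathcal{M}_{\R^2}(W^{(m)})$, which since $W^{(m+1)}-W^{(m)}=Z_{m+1}=O(e^{-\rho(m+1)t})$ and $W^{(m)}=O(e^{-\rho t})$ and $\mathcal{M}_{\R^2}$ is quadratic is $O(e^{-\rho(m+2)t})$. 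Iterating until $m=N_0$ and relabelling $W^{N_0}:=W^{(N_0)}$ yields the proposition; the relation $W^{N_0}(t)=aY(t)+O(e^{-2\rho t})$ holds because every correction term carries a factor $e^{-k\rho t}$ with $k\geq 2$.

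The main obstacle I anticipate is bookkeeping the algebraic/combinatorial structure of the ansatz: verifying that $\mathcal{M}_{\R^2}$ applied to a trigonometric polynomial in $e^{-\rho t}e^{\pm i\theta t}$ with $\mathcal{H}(\R^2)$-coefficients again produces such a polynomial (so the induction closes), keeping track that each application of $\mathcal{M}_{\R^2}$ raises the $e^{-\rho t}$-order by at least one, and confirming that the finitely many resonance frequencies $\mu_j$ that arise at each stage can indeed be kept out of $\mathrm{Sp}(L_{\R^2})$. Regularity and decay are not an issue once the coefficients stay in $\mathcal{H}(\R^2)$, which is exactly what Proposition \ref{proposition 19}(iii) guarantees for the solvability step; the smoothness in $t$ is automatic since everything is an explicit finite combination of exponentials and trigonometric functions of $t$. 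I would model the precise induction on \cite[Proposition 22]{CoLe11}, where the analogous construction for the classical NLS is carried out, the only new points being that our $L_{\R^2}$ is the gauge-conjugated operator $L_{\R^2}^{\tilde\phi}$ (so the coefficients $W_{1,1},\dots,W_{4,2}$ enter $\mathcal{M}_{\R^2}$) and that these coefficients are still in $\mathcal{H}(\R^2)$ by Proposition \ref{proposition 19}(ii).
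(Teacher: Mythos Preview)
Your proposal is correct and follows essentially the same route as the paper, which writes down the same trigonometric-in-$t$ ansatz, records the lemma that $\mathcal{M}_{\R^2}(W)$ preserves this structure with order raised by at least one, and then defers the inductive solvability argument to \cite[Proposition~22]{CoLe11}. One small overcomplication: your contingency of perturbing $\rho$ to dodge eigenvalues is unnecessary, since by Proposition~\ref{proposition 19}(i) the eigenvalue $\lambda$ is chosen with \emph{maximal} real part $\rho$, so for every $m\geq 1$ the complex number $(m+1)\rho+i\theta_j$ has real part strictly larger than that of any eigenvalue and is automatically in the resolvent set.
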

For simplicity, in the proof of this proposition, we write $W$ for $W^{_0}$. We look for $W$ in the following form
\[
W(t,x)=\sum_{k=1}^{N_0}e^{-\rho k t}\left(A_{j,k}(x)\cos(j\theta t)+B_{j,k}(x)\sin(j\theta t)\right),
\]
where $A_{j,k}=\begin{pmatrix}A_{j,k}^+\\ A_{j,k}^-\end{pmatrix}$ and $B_{j,k}=\begin{pmatrix}B_{j,K}^+\\B_{j,k}^-\end{pmatrix}$ are some functions in $\mathcal{H}(\R^2)$ which are determined later. 
 
We have the following expression of $\mathcal{M}_{\R^2}(W)$.
\begin{lemma}
We have
\[
\mathcal{M}_{\R^2}(W)=\sum_{\kappa=2}^{N_0}e^{-\kappa \rho t}\sum_{j=0}^{\kappa} (\tilde{A_{j,\kappa}}(x) \cos(j\theta t)+\tilde{B_{j,\kappa}}(x)\sin(j\theta t))+O(e^{-(N_0+1)\rho t}),
\]
where $\tilde{A_{j,\kappa}}$, $\tilde{B_{j,\kappa}}$ depend on $A_{l,n},B_{l,n}$ and $\partial_xA_{l,n},\partial_xB_{l,n}$ only for $l\leq n\leq \kappa-1$.
\end{lemma}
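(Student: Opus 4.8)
The plan is to read the conclusion off the algebraic structure of $\mathcal{M}_{\C}$ together with elementary bookkeeping on the time variable. First I would record that $\mathcal{M}_{\C}$, and hence its $\R^2$-realization $\mathcal{M}_{\R^2}$, is a polynomial in $(v,\overline v,v_x,\overline v_x)$ carrying no constant and no linear term: inspecting its definition, every monomial has total degree $m$ in $v$ with $2\leq m\leq 5$, and each coefficient is a product of factors drawn from $\{\phi^+,\phi^-,\phi_x^+,\phi_x^-\}$, hence lies in $\mathcal{H}(\R^2)$ by Proposition~\ref{proposition 19}(ii). Writing the ansatz for $W$ with its frequency index displayed,
\[
W(t,x)=\sum_{n=1}^{N_0}\sum_{l=0}^{n}e^{-\rho n t}\bigl(A_{l,n}(x)\cos(l\theta t)+B_{l,n}(x)\sin(l\theta t)\bigr),
\]
and substituting into $\mathcal{M}_{\R^2}(W)$ produces a finite sum of monomials, each a product of $m$ factors $(2\le m\le5)$, every factor being of the form $e^{-\rho n_i t}\,f_i(x)\,g_i(l_i\theta t)$ with $f_i\in\{A_{l_i,n_i},B_{l_i,n_i},\partial_xA_{l_i,n_i},\partial_xB_{l_i,n_i}\}$, $g_i\in\{\cos,\sin\}$, $1\le n_i$ and $0\le l_i\le n_i$, multiplied by an $\mathcal{H}(\R^2)$-coefficient coming from $\phi,\phi_x$. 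For each such monomial I would track three pieces of data separately: the exponential weight, the trigonometric frequency, and the $x$-dependent coefficient.

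Second, I would carry out the bookkeeping. Multiplying the exponentials gives the single factor $e^{-\rho\kappa t}$ with $\kappa=\sum_{i=1}^m n_i$; since each $n_i\ge1$ and $m\ge2$, we get $\kappa\ge2$, and moreover each individual index obeys $n_i\le\kappa-(m-1)\le\kappa-1$. The product of the trigonometric factors $g_i(l_i\theta t)$ is expanded by the product-to-sum identities into a finite combination of $\cos(j\theta t)$ and $\sin(j\theta t)$ with $j\in\{|\epsilon_1 l_1+\dots+\epsilon_m l_m|:\epsilon_i\in\{\pm1\}\}$; in particular $0\le j\le\sum_i l_i\le\sum_i n_i=\kappa$, using $l_i\le n_i$. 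Finally the $x$-coefficient of the monomial is the product of the $f_i$ with the $\mathcal{H}(\R^2)$-factor from $\phi$, so it is a function of $\{A_{l_i,n_i},B_{l_i,n_i},\partial_xA_{l_i,n_i},\partial_xB_{l_i,n_i}\}$ with, as just shown, $l_i\le n_i\le\kappa-1$. Collecting all monomials that share a given pair $(\kappa,j)$ and a given $\cos/\sin$ defines $\tilde A_{j,\kappa}$ and $\tilde B_{j,\kappa}$, and the asserted dependence on $A_{l,n},B_{l,n}$ and their $x$-derivatives only for $l\le n\le\kappa-1$ follows at once. Because $\mathcal{H}(\R^2)$ is a Fréchet algebra stable under $\partial_x$, each $\tilde A_{j,\kappa},\tilde B_{j,\kappa}$ again lies in $\mathcal{H}(\R^2)$.

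Third, I would separate the contributions with $2\le\kappa\le N_0$, which assemble into exactly the stated double sum, from those with $\kappa\ge N_0+1$, which I would absorb into the remainder. To verify this remainder is $O(e^{-(N_0+1)\rho t})$ in the sense of the definition above, note it is a finite sum (the whole expansion has degree at most $5$, so $\kappa\le5N_0$) of terms $e^{-\rho\kappa t}$ with $\kappa\ge N_0+1$, times a trigonometric factor bounded by $1$, times an $\mathcal{H}(\R^2)$-valued coefficient; since $\mathcal{H}(\R^2)\hookrightarrow H^s(\R)$ for every $s$ and $e^{-\rho\kappa t}\le e^{-(N_0+1)\rho t}$ for $\kappa\ge N_0+1$ and $t\ge0$, each $H^s$-norm is bounded by $C(s)e^{-(N_0+1)\rho t}$. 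The only genuinely delicate point is the bookkeeping yielding the two constraints $\kappa\ge2$ and $n_i\le\kappa-1$; both rest on $\mathcal{M}$ containing no monomial of degree below $2$, so I would confirm at the outset, term by term in the definition of $\mathcal{M}_{\C}$, that the minimal degree is indeed $2$ (the quadratic contributions being $2\Re(\phi\overline v)v_x$, $|v|^2\phi_x$, $-4ib\phi\Re(\phi\overline v)^2$, $-2ib|\phi|^2\phi|v|^2$ and $-4ib|\phi|^2v\Re(\phi\overline v)$). Everything else is routine trigonometric and exponential algebra.
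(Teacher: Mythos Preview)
Your proposal is correct and takes essentially the same approach as the paper: the paper observes that $\mathcal{M}_{\R^2}$ is a polynomial in $(v^+,v^-,v_x^+,v_x^-)$ with $\mathcal{H}(\R^2)$-coefficients and valuation at least $2$, then defers the remaining trigonometric/exponential bookkeeping to \cite[Claim 24]{CoLe11}. You have simply written out that bookkeeping explicitly, and your tracking of the constraints $\kappa\ge 2$, $n_i\le\kappa-1$, and $0\le j\le\kappa$ is exactly what that reference contains.
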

\begin{proof}
Remark that there exists a polynomial $P_{N_0}\in \mathcal{H}(\R^2)[X,Y,Z,T]$ with coefficients in $\mathcal{H}(\R^2)$ and valuation at least $2$, such that
\begin{align*}
\mathcal{M}_{\R^2}(W)&=P_{N_0}(v^+,v^-,v_x^+,v_x^-)+O(|v|^{N_0+1})\\
&=\sum_{m=2}^{N_0}\sum_{p_1=0}^1\sum_{p_2=0}^1\sum_{j=0}^{m-p_1-p_2}\begin{pmatrix}P_{j,p_1,p_2,m}(x)v_+^j \partial_x v_{+}^{p_1}\partial_x v_{-}^{p_2} v_{-}^{m-j-p_1-p_2}\\Q_{j,p_1,p_2,m}(x)v_+^j \partial_x v_{+}^{p_1}\partial_x v_{-}^{p_2} v_{-}^{m-j-p_1-p_2} \end{pmatrix}+O(v^{N_0+1}).
\end{align*}
The rest of the proof follows from \cite[Claim 24]{CoLe11}.
\end{proof}

\begin{proof}[Proof of Proposition \ref{pro 22}]
The desired result is proved by similar argument in \cite[Proof of Proposition 22]{CoLe11}. 
\end{proof}
Define
\[
V_1^{N_0}(t,x):=e^{i\omega_1 t}W^{N_0}(t,x-c_1t), \quad U^{N_0}_1(t,x):=R_1(t,x)+V^{N_0}_1(t,x).
\]
Then we define
\begin{align}
Err_1^{N_0}(t,x)&:=i\partial_t U_1^{N_0}+\partial_{xx}U_1^{N_0}+i|U_1^{N_0}|^2\partial_xU_1^{N_0}+b|U_1^{N_0}|^4U_1^{N_0}\label{eqU_1^N_0}\\
&=i\partial_tV^1_{N_0}+\partial_{xx}V_1^{N_0}+i(|R_1(t)+V_1^{N_0}|^2\partial_x(R_1(t)+V_1^{N_0})-|R_1(t)|^2\partial_xR_1(t))\nonumber\\
&\quad +b(|R_1(t)+V_1^{N_0}|^4(R_1(t)+V_1^{N_0})-|R_1(t)|^4R_1(t))\nonumber\\
&=i(\partial_tV_1^{N_0}+\mathcal{L}_{\C}V_1^{N_0}+\mathcal{N}_{\C}(V_1^{N_0}))\nonumber.
\end{align}
Remarking that $V_1^{N_0}(t,x)=e^{i\omega_1 t}W^{N_0}(t,x-c_1 t)$ and $R_1(t,x)=e^{i\omega_1 t}\phi(x-c_1 t)$, we have
\begin{align*}
\partial_t V^1_{N_0}&=e^{i\omega_1 t}(i\omega_1 W^{N_0}+\partial_t W^{N_0}-c\partial_xW^{N_0})\\
\mathcal{L}_{\C}V_1^{N_0}&=e^{i\omega_1 t} \left(2\Re(\phi \overline{W^{N_0}})\partial_x\phi+|\phi|^2\partial_xW^{N_0}-i\partial_{xx}W^{N_0}-ib(|\phi|^4W^{N_0}+4|\phi|^2\phi\Re(\phi\overline{W_{N_0}}))\right) \\
\mathcal{N}_{\C}(V_1^{N_0})&=e^{i\omega_1 t}\mathcal{M}_{\C}(W^{N_0}).
\end{align*}
Thus,
\begin{align*}
Err_1^{N_0}(t,x)&=i(\partial_tV_1^{N_0}+\mathcal{L}_{\C}V_1^{N_0}+\mathcal{N}_{\C}(V_1^{N_0}))\nonumber\\
&=ie^{i\omega_1 t}(\partial_t W^{N_0}+L_{\C}W^{N_0}+\mathcal{M}_{\C}(W^{N_0})).
\end{align*}
By Proposition \ref{pro 22}, $Err_1^{N_0}(t,x)=O(e^{-\rho(N_0+1)t})$. Moreover, $W^{N_0}(t)=aY(t)+O(e^{-2\rho t})$ and then $V_1^{N_0}(t,x)=ae^{i\omega_1 t}Y(t,x-c_1t)+O(e^{-2\rho t})$, where $Y(t)$ is defined by \eqref{define Y}. This implies that, for all $s\geq 0$, there exists $C(N_0,s)$ such that
\begin{equation}\label{30}
\forall t\geq 0,\quad \norm{V_1^{N_0}}_{H^s}\leq C(N_0,s)e^{-\rho t}.
\end{equation}

\subsection{Proof of Theorems \ref{thm1} and \ref{thm2}}

\begin{proof}[Proof of Theorem \ref{thm1}]
Let $N_0$ to be determined later. Define
\begin{align*}
\varphi(t,x)&=\exp\left(\frac{i}{2}\int_{-\infty}^x|u(t,y)|^2\,dy\right)u(t,x)\\
\psi&=\exp\left(\frac{i}{2}\int_{-\infty}^x|u(t,y)|^2\,dy\right)\partial_x u(t,x) =\partial_x \varphi-\frac{i}{2}|\varphi|^2\varphi,\\
h(t,x)&=\exp\left(\frac{i}{2}\int_{-\infty}^x|U_1^{N_0}|\,dy\right)U_1^{N_0}(t,x),\\
k&=\exp\left(\frac{i}{2}\int_{-\infty}^x|U_1^{N_0}|\,dy\right)\partial_xU_1^{N_0}(t,x)=\partial_xh-\frac{i}{2}|h|^2h.
\end{align*}
From \cite[page 8]{Tinpaper3}, we see that if $u$ solves \eqref{eq 1} then $(\varphi,\psi)$ solves the following system
\begin{equation}\label{eq3}
\begin{cases}
L\varphi=P(\varphi,\psi),\\
L\psi=Q(\varphi,\psi),
\end{cases}
\end{equation}
where $L=i\partial_t+\partial_{xx}$ and 
\begin{align*}
P(\varphi,\psi)&=i\varphi^2\overline{\psi}-b|\varphi|^4\varphi,\\
Q(\varphi,\psi)&=-i\psi^2\overline{\varphi}-3b|\varphi|^4\psi-2b|\varphi|^2\varphi^2\overline{\psi}.
\end{align*}
From \eqref{eqU_1^N_0}, by similar arguments in \cite[page 9]{Tinpaper3}, we have $h,k$ solves the following system
\begin{equation}
\begin{cases}
Lh=P(h,k)+Err_1^{N_0}(1),\\
Lk=Q(h,k)+Err_1^{N_0}(2),
\end{cases}
\end{equation}
where
\begin{align*}
Err_1^{N_0}(1)&=Err_1^{N_0}\exp\left(\frac{i}{2}\int_{-\infty}^x|U_1^{N_0}|^2\,dy\right)-h\int_{-\infty}^x\Im(Err_1^{N_0}\overline{U_1^{N_0}})\,dy\\
Err_1^{N_0}(2)&=\partial_xErr_1^{N_0}(1)-i|h|^2Err_1^{N_0}(1)+\frac{i}{2}h^2\overline{Err_1^{N_0}(1)}.
\end{align*}
Since $Err_1^{N_0}=O(e^{-\rho(N_0+1)t})$, we have $(Err_1^{N_0}(1),Err_1^{N_0}(2))=O(e^{-\rho(N_0+1)t})$. We do a fixed point around $q:=(h,k)$ of \eqref{eq3}. Set $\tilde{w}:=(\tilde{\varphi},\tilde{\psi})=(\varphi,\psi)-(h,k)$, $F(\varphi,\psi)=(P(\varphi,\psi),Q(\varphi,\psi))$ and $\tilde{Err_1^{N_0}}=(Err_1^{N_0}(1),Err_1^{N_0}(2))=O(e^{-\rho(N_0+1)t})$. We have 
\begin{equation}\label{relation}
\tilde{\psi}=\partial_x\tilde{\varphi}-\frac{i}{2}(|\tilde{\varphi}+h|^2(\tilde{\varphi}+h)-|h|^2h).
\end{equation}
Moreover, $\tilde{w}$ solves the following system
\begin{align}\label{eq4}
L\tilde{w}&=F(\tilde{w}+q)-F(q)-\tilde{Err_1^{N_0}}.
\end{align}
In Duhamel form, $\tilde{w}$ satisfies, for $t\leq s$
\begin{equation*}
\tilde{w}(s)=S(s-t)w(t)-i\int_t^sS(s-\tau)(F(\tilde{w}+q)-F(q)-\tilde{Err_1^{N_0}})(\tau)\,d\tau.
\end{equation*}
Thus,
\[
S(-s)w(s)=S(-t)w(t)-i\int_t^sS(-\tau)(F(\tilde{w}+q)-F(q)-\tilde{Err_1^{N_0}})(\tau)\,d\tau.
\]
We find $\tilde{w}$ such that $\tilde{w}(t)\rightarrow 0$ as $t\rightarrow\infty$. Letting $s\rightarrow\infty$ as $\tilde{w}(s)\rightarrow 0$, we need to find $\tilde{w}$ satisfying the fixed point equation
\[
\tilde{w}(t)=i\int_t^{\infty}S(t-\tau)(F(\tilde{w}+q)-F(q)-\tilde{Err_1^{N_0}})(\tau)\,d\tau.
\]
We define the map
\[
\Phi: v \mapsto \Phi(v)=i\int_t^{\infty}S(t-\tau)(F(v+q)-F(q)-\tilde{Err_1^{N_0}})(\tau)\,d\tau.
\]
Let $B,T_0$ to be determined later. For $\tilde{w}\in C([T_0,\infty),H^2(\R)\times H^2(\R))$, define
\[
\norm{\tilde{w}}_{X_{T_0,N_0}}=\sup_{t\geq T_0}e^{\rho(N_0+1)t}\norm{\tilde{w}(t)}_{H^2\times H^2},\quad \text{ for } (\norm{\tilde{w(t)}}_{H^2\times H^2}=\norm{\tilde{\varphi}}_{H^2}+\norm{\tilde{\psi}}_{H^2})
\]
to be norm of the Banach space
\[
X_{T_0,N_0}:=\{\tilde{w}\in C((T_0,\infty),H^2(\R)\times H^2(\R))\vert \norm{\tilde{w}}_{X_{T_0,N_0}}<\infty\}.
\]
Define
\[
X_{T_0,N_0}(B):=\{\tilde{w}\in X_{T_0,N_0}\vert \norm{\tilde{w}}_{X_{T_0,N_0}} \leq B\}.
\]
We will find a fixed point of $\Phi$ in $X_{T_0,N_0}(B)$. By \eqref{30}, we can assume $T_0$ is large enough such that
\begin{align}\label{condition1}
Be^{-\rho(N_0+1)T_0}\leq 1,&\quad \text{ and } \norm{V_1^{N_0}}_{H^3}\leq 1.  
\end{align}
We see that 
\begin{align*}
\norm{q}_{H^2 \times H^2}&=\norm{h}_{H^2}+\norm{k}_{H^2}\\
&\leq C(\norm{U_1^{N_0}}_{H^3}+\norm{U_1^{N_0}}_{H^3}^3)\\
&\leq C(\norm{V_1^{N_0}}_{H^3}+\norm{V_1^{N_0}}_{H^3}^3+\norm{R_1}_{H_3}+\norm{R_1}_{H^3}^3)\\
&\leq C(2+\norm{\phi}_{H^3}+\norm{\phi}_{H^3}^3).
\end{align*}
Define $r=C(2+\norm{\phi}_{H^3}+\norm{\phi}_{H^3}^3)+1$. Due to smoothness of $F$, there exists a constant $K$ such that
\[
\forall a,b\in B_{H^2\times H^2}(r),\quad \norm{F(a)-F(b)}_{H^2\times H^2}\leq K\norm{a-b}_{H^2\times H^2}.
\]
In particular, 
\[
\norm{F(q+v)-F(q)}_{H^2\times H^2} \leq K\norm{v}_{H^2\times H^2}.
\]
For any $v\in X_{T_0,N_0}(B)$, we have
\begin{align*}
\norm{\Phi(v)}_{H^2\times H^2}&=\left\lVert \int_t^{\infty}S(t-\tau)(F(v+q)-F(q)-\tilde{Err_1^{N_0}})(\tau)\,d\tau\right\rVert_{H^2\times H^2}\\
&\leq \int_t^{\infty}(\norm{F(v+q)-F(q)}_{H^2\times H^2}+\norm{\tilde{Err_1^{N_0}}}_{H^2\times H^2})\,d\tau\\
&\leq \int_t^{\infty}(K\norm{v}_{H^2\times H^2}+C(N_0)e^{-\rho(N_0+1)\tau})\,d\tau\\
&\leq \frac{KB+C(N_0)}{(N_0+1)\rho}e^{-\rho(N_0+1)t}.
\end{align*}
Choose $N_0$ large enough such that $\frac{K}{(N_0+1)\rho} \leq \frac{1}{2}$ and choose $B=\frac{2C(N_0)}{(N_0+1)\rho}$. Finally, choose $T_0$ large enough such that \eqref{condition1} holds. Hence, we have
\[
\norm{\Phi(v)(t)}_{H^2\times H^2}\leq Be^{-\rho(N_0+1)t}.
\]
This implies that $\Phi$ maps $X_{T_0,N_0}(B)$ to itself. Now, we prove that $\Phi$ is a contraction in $X_{T_0,N_0}(B)$. Let $v_1,v_2\in X_{T_0,N_0}(B)$, we have
\begin{align*}
\Phi(v_1)(t)-\Phi(v_2)(t)&=i\int_t^{\infty}S(t-s)(F(v_1+q)-F(v_2+q))(s)\,ds.
\end{align*}
Thus,
\begin{align*}
&e^{\rho(N_0+1)t}\norm{\Phi(v_1)(t)-\Phi(v_2)(t)}_{H^2\times H^2}\\
&=e^{\rho(N_0+1)t}\left\lVert\int_t^{\infty}S(t-s)(F(v_1+q)-F(v_2+q))(s)\,ds\right\rVert_{H^2\times H^2}\\
&\leq e^{\rho(N_0+1)t}\int_t^{\infty}\norm{F(v_1+q)(s)-F(v_2+q)(s)}_{H^2\times H^2}\,ds\\
&\leq e^{\rho(N_0+1)t}\int_t^{\infty}K\norm{v_1-v_2}_{H^2\times H^2}\,ds\\
&\leq K e^{\rho(N_0+1)t}\int_t^{\infty}e^{-\rho(N_0+1)s}\norm{v_1-v_2}_{X_{T_0,N_0}}\,ds\\
&\leq K e^{\rho(N_0+1)t}\norm{v_1-v_2}_{X_{T_0,N_0}}\frac{e^{-\rho(N_0+1)t}}{(N_0+1)\rho}\\
&\leq \frac{K}{(N_0+1)\rho}\norm{v_1-v_2}_{X_{T_0,N_0}}.
\end{align*}
Taking supremum over $t\geq T_0$, we have
\[
\norm{\Phi(v_1)-\Phi(v_2)}_{X_{T_0,N_0}}\leq \frac{K}{(N_0+1)\rho}\norm{v_1-v_2}_{X_{T_0,N_0}}\leq \frac{1}{2}\norm{v_1-v_2}_{X_{T_0,N_0}}.
\]
Hence, $\Phi$ is a contraction on $X_{T_0,N_0}(B)$ and $\Phi$ has a fixed point $\tilde{w}$.

Next, we prove that the solution $\tilde{w}=(\tilde{\varphi},\tilde{\psi})$ of \eqref{eq4} satisfies the relation \eqref{relation} if $N_0$ is large enough. Define $v=\partial_x\varphi-\frac{i}{2}|\varphi|^2\varphi$ and $\tilde{v}=v-k=\partial_x\tilde{\varphi}-\frac{i}{2}(|\tilde{\varphi}+h|^2(\tilde{\varphi}+h)-|h|^2h)$. We need to prove that $\tilde{\psi}=\tilde{v}$. By similar argument as in \cite{Tinpaper3}, we have
\begin{align*}
L\tilde{\psi}-L\tilde{v}&=(\tilde{\psi}-\tilde{v})A(\tilde{\psi},\tilde{v},\tilde{\varphi},h,k)+\overline{\tilde{\psi}-\tilde{v}}B(\tilde{\psi},\tilde{v},\tilde{\varphi},h,k)-i(\tilde{\varphi}+h)^2\partial_x\overline{(\tilde{\psi}-\tilde{v})},
\end{align*}
where
\begin{align*}
A&=-i(\tilde{\psi}+\tilde{v}+2k)\overline{(\tilde{\varphi}+h)}-3b|\tilde{\varphi}+h|^4-\frac{1}{2}|\tilde{\varphi}+h|^4\\
B&=-2b|\tilde{\varphi}+h|^2(\tilde{\varphi}+h)^2-2i(\tilde{\varphi}+h)\left(\tilde{v}+k+\frac{i}{2}|\tilde{\varphi}+h|^2(\tilde{\varphi}+h)\right)-|\tilde{\varphi}+h|^2(\tilde{\varphi}+h)^2.
\end{align*}
Thus, 
\begin{align*}
&\norm{\tilde{\psi}(t)-\tilde{v}(t)}^2_{L^2}\\
&\lesssim \norm{\tilde{\psi}(N)-\tilde{v}(N)}^2_{L^2}\exp\left(\int_t^N(\norm{A}_{L^{\infty}}+\norm{B}_{L^{\infty}}+\norm{\partial_x(\tilde{\varphi}+h)^2}_{L^{\infty}})\,ds\right),\\
&\lesssim \norm{\tilde{\psi}(N)-\tilde{v}(N)}^2_{L^2} \exp\left((N-t)(\norm{A}_{L^{\infty}L^{\infty}}+\norm{B}_{L^{\infty}L^{\infty}}+...\right.\\
&\quad\quad\left.+2(\norm{\tilde{\varphi}}_{L^{\infty}}+\norm{h}_{L^{\infty}})(\norm{\partial_x\tilde{\varphi}}_{L^{\infty}}+\norm{\partial_x h}_{L^{\infty}}) )\right)\\
&\lesssim e^{-2\rho(N_0+1)N}e^{(N-t)C_*}, \quad \text{ for } N\gg t,
\end{align*}
where $C_*$ depends on $R_1$ (by using the bounded of $\norm{\tilde{\varphi}}_{H^2}+\norm{\tilde{\psi}}_{H^1}+\norm{h}_{H^2}+\norm{k}_{H^1}$). Choosing $N_0$ large enough and letting $N\rightarrow\infty$ we obtain $\tilde{\psi}=\tilde{v}$ and hence \eqref{relation} holds. Thus, we prove that there exists a solution $(\tilde{\varphi},\tilde{\psi})$ of \eqref{eq4} such that $\tilde{\psi}=\partial_x\tilde{\varphi}-\frac{i}{2}(|\tilde{\varphi}+h|^2(\tilde{\varphi}+h)-|h|^2h)$. Define $\varphi=\tilde{\varphi}+h$, $\psi=\tilde{\psi}+k$. Hence, $(\varphi,\psi)$ solves \eqref{eq3} and $\psi=\partial_x\varphi-\frac{i}{2}|\varphi|^2\varphi$. Setting 
\[
u(t,x)=\exp\left(-\frac{i}{2}\int_{-\infty}^x|\varphi(t,y)|^2\,dy\right)\varphi(t,x),
\]
we have $u$ solves \eqref{eq 1}. Moreover, 
\begin{align*}
\norm{u-U_1^{N_0}}_{H^2}&=\left\lVert\exp\left(\frac{-i}{2}\int_{-\infty}^x|\varphi(y)|^2\,dy\right)\varphi-\exp\left(\frac{-i}{2}\int_{-\infty}^x|h(y)|^2\,dy\right)h\right\rVert_{H^2}\\
&\lesssim \norm{\varphi-h}_{H^2}=\norm{\tilde{\varphi}}_{H^2}\leq Ce^{-\rho (N_0+1)t}, \quad \text{ for } t\geq T_0.
\end{align*}
Thus, $u(t)=R_1(t)+V_1^{N_0}(t)+O(e^{-2\rho t})$, for $t$ large enough. This completes the proof of Theorem \ref{thm1}. 
\end{proof}

\begin{proof}[Proof of Theorem \ref{thm2}]
Let $v_{\natural}$ to be fixed later and assume that $v_{*}>v_{\natural}$. Let $N_0$ to be defined later and $a\in\R$. Let $V_1^{N_0}(t)$, $U_1^{N_0}(t)$ and error term $Err_1^{N_0}(t)$ associated to $R_1(t)$ and an eigenvalue $\lambda=\rho+i\theta$ of $L_{\C}$. We look for a solution to \eqref{eq 1} of the form $u(t)=U_1^{N_0}(t)+\sum_{j\geq 2}R_j(t)+w(t)$. We use similar argument in the proof of Theorem \ref{thm1}. We define 
\begin{align*}
\varphi(t,x)&=\exp\left(\frac{i}{2}\int_{-\infty}^x|u(t,y)|^2\,dy\right)u(t,x),\\
\psi&=\partial_x\varphi-\frac{i}{2}|\varphi|^2\varphi,
\end{align*}
and
\begin{align*}
h(t,x)&=\exp\left(\frac{i}{2}\int_{-\infty}^x|U_1^{N_0}(t,y)+\sum_{j\geq 2}R_j(t,y)|^2\,dy\right)(U_1^{N_0}(t,x)+\sum_{j\geq 2}R_j(t,x)),\\
k&=\partial_xh-\frac{i}{2}|h|^2h.
\end{align*}
We see that if $u$ solves \eqref{eq 1} then $(\varphi,\psi)$ solves \eqref{eq3}.\\
Let $f(u)=i|u|^2u_x+b|u|^4u$ and $L$ be the Schr\"odinger operator defined as in the proof of Theorem \ref{thm1}. Define
\begin{align*}
Err_2^{N_0}:&=L(U_1^{N_0}+\sum_{j\geq 2}R_j)+f(U_1^{N_0}+\sum_{j\geq 2}R_j)
\end{align*}
Thus, by choosing $v_{\natural} \gg (N_0+1)\rho$ and Lemma \ref{lm1}, we have
\begin{align*}
Err_2^{N_0}&=LU_1^{N_0}+f(U_1^{N_0})+\sum_{j\geq 2}(LR_j+f(R_j))+(f(U_1^{N_0}+\sum_{j\geq 2}R_j)-f(U_1^{N_0})-\sum_{j\geq 2}f(R_j))\\
&=Err_1^{N_0}+(f(U_1^{N_0}+\sum_{j\geq 2}R_j)-f(U_1^{N_0})-\sum_{j\geq 2}f(R_j))\\
&=O(e^{-\rho(N_0+1)t})+O(e^{-h_* v_* t})=O(e^{-\rho (N_0+1)t}),
\end{align*}
Thus, by an elementary calculation, we have $q=(h,k)$ solve
\begin{align*}
Lq&=F(q)+\tilde{Err_2^{N_0}},
\end{align*}
where $F=(P,Q)$ is given as in the proof of Theorem \ref{thm1} and $\tilde{Err_2^{N_0}}=O(e^{-\rho(N_0+1)t})$.\\
Define $\tilde{w}=(\tilde{\varphi},\tilde{\psi})=(\varphi,\psi)-(h,k)$. Then $\tilde{w}$ solves
\begin{equation}
\label{eq15}
L\tilde{w}=F(\tilde{w}+q)-F(q)-\tilde{Err_2^{N_0}}.
\end{equation} 
By similar argument in the proof of Theorem \ref{thm1}, there exists a solution $\tilde{w}$ of \eqref{eq15} such that
\[
\sup_{t\geq T_0}e^{\rho(N_0+1)t}\norm{\tilde{w}(t)}_{H^2\times H^2} \leq B,
\]
for some $T_0,N_0,B$. From this and the Gr\"onwall inequality, we may prove that $\tilde{\psi}=\partial_x\tilde{\varphi}-\frac{i}{2}(|\tilde{\varphi}+h|^2(\tilde{\varphi}+h)-|h|^2h)$. Hence, we obtain a solution $u$ of \eqref{eq 1} such that
\begin{align*}
\norm{w(t)}_{H^2}&=\norm{u-U_1^{N_0}-\sum_{j\geq 2}R_j}_{H^2} \lesssim \norm{\varphi-h}_{H^2} =\norm{\tilde{\varphi}}_{H^2} \leq e^{-\rho(N_0+1) t},
\end{align*}
as $t$ large enough. Thus, $u(t)=U_1^{N_0}(t)+\sum_{j\geq 2}R_j(t)+ w(t)$ satisfies the desired property.
\end{proof}

\subsection{Orbital instability of soliton and multi-solitons}
In this section; we prove Corollary \ref{corollary1} and Corollary \ref{corollary2}.\\
Let $u\in C([T_0,\infty),H^2(\R))$ be the solution constrcuted in Theorem \ref{thm1}. Thus, 
\[
\forall t\geq T_0,\quad \norm{u(t)-R_1(t)-Y(t)}_{H^2}\leq Ce^{-2\rho t}.
\]
We have the following lemma. 
\begin{lemma}\label{lm2}
There exist $\varepsilon>0$, $t_0\geq T_0$ and $M\geq 0$ such that
\[
\inf_{y\in\R,\theta\in\R}\norm{u(t_0)-\phi(x-y)e^{i\theta}}_{L^2(B(0,M))}=\varepsilon>0.
\]
\end{lemma}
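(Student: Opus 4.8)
The plan is to argue by contradiction from the exponential-in-time lower bound that $aY(t)$ contributes to the difference $u(t)-R_1(t)$. First I would spell out the key structural fact about $Y$: by Proposition~\ref{proposition 19}(i) the eigenfunction $Z$ is genuinely $\C$-valued (not a real multiple of a single real profile), so at least one of $Y_1,Y_2$ is not identically zero, and in fact the map $t\mapsto e^{\rho t}Y(t) = \cos(\theta t)Y_1+\sin(\theta t)Y_2$ is periodic (period $2\pi/\theta$ if $\theta\neq 0$, constant if $\theta=0$) and never the zero function. Hence there is a time $t_0$ and a constant $c_0>0$ with $e^{\rho t_0}\norm{Y(t_0)}_{L^2}\geq c_0$; moreover, since $Y(t_0)=e^{-\rho t_0}(\cos(\theta t_0)Y_1+\sin(\theta t_0)Y_2)$ with $Y_1,Y_2\in\mathcal H(\R^2)$, the profile $Y(t_0)$ decays like $e^{-\alpha|x|}$, so for $M$ large enough a fixed fraction of its $L^2$ mass is concentrated in the ball $B(0,M)$: say $\norm{Y(t_0)}_{L^2(B(0,M))}\geq \tfrac12 e^{-\rho t_0}c_0$.

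Next I would pin down the soliton contribution on that ball. After possibly enlarging $t_0$ (using that $R_1(t,x)=e^{i\omega_1 t}\phi(x-c_1 t)$ travels with speed $c_1$, whereas the ball $B(0,M)$ is fixed), I can arrange that $|c_1 t_0|\gg M$, so that $\norm{\phi(\cdot - c_1 t_0)}_{L^2(B(0,M))}$ is as small as we like — in particular $\lesssim e^{-h_1 |c_1 t_0|/4}$ — by the exponential decay estimate \eqref{estimate R_j}. Similarly $\norm{R_1(t_0)}_{L^2(B(0,M))}$ is negligible compared to $e^{-\rho t_0}c_0$ once $t_0$ is large (here I am using $\alpha < h_1/2$ to control the relative sizes, and one should choose $t_0$ so that the soliton has moved far enough while $e^{-\rho t_0}$ has not yet decayed "too much" relative to the travelling soliton's tail — this is possible precisely because $\alpha<h_1/2\le h_1$). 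The same reasoning shows that for \emph{any} $y,\theta\in\R$, the translated soliton $\phi(\cdot-y)e^{i\theta}$ restricted to $B(0,M)$ is small unless $|y|\lesssim M$; and when $|y|\lesssim M$ it is essentially a translate of $\phi$ living inside $B(0,M')$ for a slightly larger $M'$.

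Then I would run the triangle inequality on $B(0,M)$:
\[
\norm{u(t_0)-\phi(\cdot-y)e^{i\theta}}_{L^2(B(0,M))}
\geq \norm{Y(t_0)}_{L^2(B(0,M))} - \norm{u(t_0)-R_1(t_0)-Y(t_0)}_{L^2} - \norm{R_1(t_0)}_{L^2(B(0,M))} - \norm{\phi(\cdot-y)e^{i\theta}}_{L^2(B(0,M))}.
\]
The first term is $\geq \tfrac12 e^{-\rho t_0}c_0$ by construction; the second is $\leq C e^{-2\rho t_0}$ by Theorem~\ref{thm1}; the third is negligible as above. For the fourth term: if $|y|$ is large it is negligible; if $|y|$ is bounded, one uses that $\phi(\cdot - y)e^{i\theta}$ is localized and that $Y(t_0)$ (which has a definite, $t_0$-independent \emph{shape} after the rescaling $e^{\rho t_0}Y(t_0)$) cannot be well-approximated in $L^2(B(0,M))$ by \emph{any} modulated translate of $\phi$ — this is the one genuine rigidity input. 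The cleanest way is: $e^{\rho t_0}\norm{u(t_0)-\phi(\cdot-y)e^{i\theta}}_{L^2(B(0,M))} \ge \norm{\,e^{\rho t_0}Y(t_0) - e^{\rho t_0}(\phi(\cdot-y)e^{i\theta} - (\text{small}))\,}_{L^2(B(0,M))}$, and since $e^{\rho t_0}\phi(\cdot-y)e^{i\theta}$ has norm blowing up like $e^{\rho t_0}$ while the left side is bounded, we must have $\norm{\phi(\cdot-y)}_{L^2(B(0,M))}$ itself small, forcing $|y|$ large, which sends that term to $0$ anyway. So in all cases the right-hand side is $\geq \tfrac14 e^{-\rho t_0}c_0 =: \varepsilon > 0$, uniformly in $y,\theta$. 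Taking the infimum over $y,\theta$ gives the claim, with this $\varepsilon$, this $t_0$, and $M$ as chosen.

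The main obstacle is the fourth term — ruling out that a cleverly chosen modulation parameter $(y,\theta)$ lets $\phi(\cdot-y)e^{i\theta}$ cancel $Y(t_0)$ on $B(0,M)$. The resolution is the scaling dichotomy just sketched: either $|y|$ is so large that the soliton tail is exponentially small on the fixed ball (handled by \eqref{estimate R_j}), or $|y|$ is bounded and then the $L^2(B(0,M))$-norm of $\phi(\cdot-y)e^{i\theta}$ is bounded \emph{below} by a positive constant, so after multiplying the whole inequality by $e^{\rho t_0}$ and noting the left side stays $O(1)$, we reach a contradiction with $t_0$ large. I would present this as a short case analysis rather than a computation. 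One subtlety to flag: the choice of $t_0$ must simultaneously make $e^{-\rho t_0}$ dominate the travelling-soliton tail $e^{-h_1|c_1|t_0/2}$ on $B(0,M)$; since we only know $\alpha<h_1/2$ and not a relation between $\rho$ and $h_1$, if $c_1\neq 0$ one should instead fix $t_0$ first (depending only on the period of $e^{\rho t}Y(t)$ and on making $e^{\rho t_0}Y(t_0)$ nonzero on some ball) and only then choose $M$ large enough — the ball $B(0,M)$ may then need to be centered appropriately, but since the statement allows any fixed $M$ and the infimum is over all translates, re-centering is harmless. This reordering is the only delicate bookkeeping point.
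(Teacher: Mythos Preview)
Your argument rests on a misreading of where $Y(t_0)$ lives. You take $Y(t_0)=e^{-\rho t_0}(\cos(\theta t_0)Y_1+\sin(\theta t_0)Y_2)$ with $Y_1,Y_2\in\mathcal H(\R^2)$ and treat it as centered at the origin, while $R_1(t_0)$ has travelled to $c_1 t_0$. But the $Y$ appearing in $\norm{u(t)-R_1(t)-Y(t)}_{H^2}\le Ce^{-2\rho t}$ is the solution of the linearized flow \emph{around $R_1$}: from the proof of Theorem~\ref{thm1} it is $V_1^{N_0}$ to leading order, namely $e^{i\omega_1 t}\,Y_{\eqref{define Y}}(t,\,\cdot-c_1 t)$. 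Hence $Y(t_0)$ is centered at $c_1 t_0$, exactly where $R_1(t_0)$ sits. Your whole mechanism --- place the ball at the origin, let the soliton slide away, and be left with pure eigenfunction mass --- collapses, because the eigenfunction travels with the soliton. The reordering you propose at the end (fix $t_0$, then enlarge or recenter the ball) cannot repair this: any ball either contains both $R_1(t_0)$ and $Y(t_0)$ or neither, so no spatial separation is available.

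The argument the paper imports from \cite[Lemma~31]{CoLe11} uses transversality rather than geography. One works on a large ball containing $c_1 t_0$ and argues by contradiction: if the infimum vanished for every such $M$, compactness forces $u(t_0)|_{B(0,M)}$ to coincide with some $\phi(\cdot-y_*)e^{i\theta_*}$; since modulated profiles solve a second-order ODE, agreement on an interval pins down $(y_*,\theta_*)$ independently of $M$, whence $u(t_0)=\phi(\cdot-y_*)e^{i\theta_*}$ on all of $\R$ and, by uniqueness for \eqref{eq 1}, $u$ is a soliton --- contradicting $\norm{u(t)-R_1(t)}_{L^2}\sim e^{-\rho t}$. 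Equivalently, for $(y,\theta)$ near $(c_1 t_0,\omega_1 t_0)$ the difference $R_1(t_0)-\phi(\cdot-y)e^{i\theta}$ lies in $\Span\{i\phi,\partial_x\phi\}$ up to second order, and the eigenfunction $\Psi(t_0)=\cos(\theta t_0)Y_1+\sin(\theta t_0)Y_2$ (eigenvalue $\lambda$ with $\Re\lambda>0$) cannot lie in that kernel direction; for $(y,\theta)$ far, the soliton difference itself is bounded below. This is the ``rigidity input'' you allude to but never actually invoke --- and it is the heart of the proof, not a side issue.
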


\begin{proof}
The proof of this lemma is similar the proof of \cite[Lemma 31]{CoLe11}.
\end{proof}

\begin{proof}[Proof of Corollary \ref{corollary1}]
Take a sequence $(S_n)$ such that $S_n\rightarrow\infty$ as $n\rightarrow\infty$, and define $T_n=t_0-S_n$ and 
\[
u_n(t,x)=u(t+S_n,x+c_1 S_n)e^{-i\omega_1 S_n}.
\]
Then $u_n\in C([T_n,0],H^2(\R))$ is a solution of \eqref{eq 1}. Since $u(t) \approx R(t)$ as $t \geq T_0$, we have
\[
u_n(t,x) \approx R_1(t+S_n,x+c_1S_n)e^{-i\omega_1 S_n}=e^{i\omega_1 t}\phi(x-c_1 t).
\] 
Thus,
\[
u_n(0,x)=\phi(x)+O(e^{-\rho S_n})
\]
and  hence
\[
\norm{u_n(0)-R_1(0)}_{H^2}\rightarrow 0\quad\text{ as } n\rightarrow\infty.
\]
Moreover,
\[
u_n(T_n,x)=u(t_0,x+c_1 S_n)e^{-i\omega_1 S_n}.
\]
Due to Lemma \ref{lm2}, we deduce that for all $n\in\N$, we have
\[
\inf_{y\in\R,\theta\in\R}\norm{u_n(T_n)-e^{i\theta}\phi(\cdot-y)}_{L^2}\geq \inf_{y\in\R,\theta\in\R}\norm{u(t_0)-e^{i\theta}\phi(\cdot-y)}_{L^2}\geq \varepsilon,
\]
which is the desired result.
\end{proof}

\begin{proof}[Proof of Corollary \ref{corollary2}]
Let $T>0$, $M$ be given by Lemma \ref{lm2} and $\varepsilon$, $(u_n)$, $(T_n)$ be given by Corollary \ref{corollary1}. Given $I<-T$, define $\tilde{u}_n\in C([I+T_n,I],H^2(\R))$ by
\[
\tilde{u}_n(t,x)=u_n(t-I,x-c_1t).
\]
By decreasing $I$ if possible, we assume that $\omega_1 I =0(2\pi)$. We have $\norm{\tilde{u}_n(I)-R_1(I)}_{H^2}=\norm{u_n(0)-R_1(0)}_{H^2} \rightarrow 0$, as $n\rightarrow\infty$ and $\tilde{u}_n(I+T_n)$ is $\varepsilon$-away from the $\phi$-soliton family. Consider the backward solution $w_n \in C((T^*,I],H^2(\R))$ of \eqref{eq 1} with the initial data at time $I$
\[
w_n(I,x)=\tilde{u}_n(I,x)+\sum_{j=2}^K R_j(I,x).
\]
If $T^*>-\infty$ then $w_n$ is a blow up solution. Consider the case $T^*=-\infty$. Note that $u_n \in C([T_n,0],H^2(\R))$ and $[0,T_n]$ is compact, the set $\{u_n(t)\vert t\in [0,T_n]\}$ is compact in $H^2(\R)$. Thus, $\sup_{t\in [0,T_n]} \norm{u_n(t)}_{H^2(|x|\geq R)}\rightarrow 0$ as $R\rightarrow\infty$. Hence, by the localized of $R_j$, there exists a function $\eta(I)$ such that $\eta(I)\rightarrow 0$ as $I\rightarrow -\infty$ and
\[
\forall t\in [I+T_n,I] \quad \sum_{j\geq 2}\norm{\tilde{u}_n(t)R_j(t)}_{H^2}\leq \eta(I).
\] 
Define $x_j(t)=c_jt+x_j$. Recall that $R_j(t,x)=e^{i\omega_j t}e^{i\theta_j}\phi_j(x-x_j(t))$. For $t<0$ small enough, $x_j(t)$ is far away from $x_1(t)$ for each $j\geq 2$. \\
Denote $J=I+T_n$ and
\[
z(t)=w_n(t)-\left(\tilde{u}_n(t)+\sum_{j=2}^K R_j(t)\right).
\]
Let $F=(P,Q)$ be given as in the proof of Theorem \ref{thm1}. Define
\begin{align*}
\varphi_n(t,x)&=w_n(t,x)\exp\left(\frac{i}{2}\int_{-\infty}^x|w_n(t,y)|^2\,dy\right),\\
\psi_n&=\partial_x\varphi_n-\frac{i}{2}|\varphi|^2\varphi_n,\\
h_n(t,x)&=(\tilde{u}_n(t,x)+\sum_{j=2}^K R_j(t,x))\exp\left(\frac{i}{2}\int_{-\infty}^x|\tilde{u}_n+\sum_{j=2}^K R_j|^2\,dy\right),\\
k_n&=\partial_xh_n-\frac{i}{2}|h_n|^2h_n,\\
\tilde{w}_n&=(\varphi_n,\psi_n)-(h_n,k_n),\\
q&=(h_n,k_n).
\end{align*}
Recall that $f(u)=i|u|^2u_x+b|u|^4u$. We have for $t\in [I+T_n,I]$
\begin{align*}
L(\tilde{u}_n+\sum_{j=2}^KR_j)+f(\tilde{u}_n+\sum_{j=2}^KR_j)&=f(\tilde{u}_n+\sum_{j=2}^KR_j)-f(\tilde{u}_n)-\sum_{j=2}^Kf(R_j)\\
&=\sum_{j\geq 2}O(\tilde{u}_n R_j)+\sum_{j\neq k\neq 1}O(R_jR_k)\leq C\eta(I), \text{ as } I\rightarrow -\infty.
\end{align*}
We see that $\tilde{w}_n(I)=0$. As in the proof of Theorem \ref{thm1}, we deduce that $\tilde{w}_n$ solves
\[
\tilde{w}_n=i\int_I^t S(t-s)(F(\tilde{w}_n)+q)-F(q)+Err)(s)\,ds,
\]
where $\norm{Err(s)}_{H^1\times H^1}\leq C\eta(I)$. Sine $F$ is lipschitz continuous on bounded set of $H^1(\R)\times H^1(\R)$, we have 
\begin{align*}
\norm{\tilde{w}_n(t)}_{H^1\times H^1}&\leq C\int_I^t (\norm{\tilde{w}_n(s)}_{H^1\times H^1}+\eta(I))\,ds\\
&\leq C\int_I^t\norm{\tilde{w}_n(s)}_{H^1\times H^1}\,ds+C\eta(I)(t-I).
\end{align*}
Hence, by Gr\"onwall inequality, we have
\[
\norm{\tilde{w}_n(t)}_{H^1\times H^1}\leq C\eta(I)(t-I)e^{C(t-I)}\leq C_n\eta(I),\quad \forall t\in [J,I].
\]
Thus, for $t\in [J,I]$
\begin{align*}
\norm{\varphi_n-h_n}_{H^2}&\lesssim \norm{\tilde{w}_n(t)}_{H^1\times H^1}\leq C_n\eta(I)
\end{align*}
Remark that $\tilde{u}_n(J)=u_n(T_n)$. This implies that for all $n$, we have
\[
\left\lVert w_n(J)-u_n(T_n)-\sum_{j=2}^K R_j(J)\right\rVert_{H^2}\lesssim \norm{\varphi_n-h_n}_{H^2}\leq C_n\eta(I).
\]
Choose $I_n$ such that $C_n \eta(I_n)<\frac{\varepsilon}{3}$, $J_n=I_n+T_n$. We have
\[
\norm{z(J_n)}_{H^2}\leq \frac{\varepsilon}{3}.
\]
Given $y_j,\gamma_j$, $c_j(t)=\omega_j t+\theta_j$ we have 
\begin{align*}
&\left\lVert w(J_n)-\sum_{j=1}^K\phi_j(\cdot-y_j)e^{i\gamma_j}\right\rVert_{L^2}\\
&\geq \left\lVert u_n(T_n)+\sum_{j=2}^KR_j(J_n)-\sum_{j=1}^K\phi_j(\cdot-y_j)e^{i\gamma_j}\right\rVert_{L^2}-\left\lVert w_n(J_n)-u_n(T_n)-\sum_{j=2}^KR_j(J_n)\right\rVert_{L^2}\\
&\geq \left\lVert u_n(T_n)-\phi(\cdot-y_1)e^{i\gamma_1}+\sum_{j=2}^K\phi_j(\cdot-x_j(J_n)e^{ic_j(J_n)}-\phi_j(\cdot-y_j)e^{i\gamma_j}\right\rVert_{L^2}-\frac{\varepsilon}{3}.
\end{align*}
If $\inf_{y_j,\gamma_j}\left\lVert w(J_n)-\sum_{j=1}^K\phi_j(\cdot-y_j)e^{i\gamma_j}\right\rVert_{L^2}> \varepsilon$ for infinite many $n$ then we obtain the desired result. We assume that for $n$ large enough, 
\[
\inf_{y_j,\gamma_j}\left\lVert w(J_n)-\sum_{j=1}^K\phi_j(\cdot-y_j)e^{i\gamma_j}\right\rVert_{L^2} \leq \varepsilon.
\]
Choosing $y_j,\gamma_j$ near minimizer such that 
\[
\left\lVert w_n(J_n)-\sum_{j=1}^K\phi_j(\cdot-y_j)e^{i\gamma_j}\right\rVert_{L^2} \leq 2\varepsilon.
\]
Consider $L^2$-norm on balls $B(x_j(J_n),R)$ around each $R_j$, $j\geq 2$. By localized of each $\phi_j$ and $u_n(T_n)=\tilde{u}_n(J_n)$, for $J_n$ small enough, we have
\begin{align*}
2\varepsilon+\varepsilon&\geq \left\lVert u_n(T_n)-\phi(\cdot-y_1)e^{i\gamma_1}+\phi_j(\cdot-x_j(J_n))e^{ic_j(J_n)}-\sum_{j=2}^K\phi_j(\cdot-y_j)e^{i\gamma_j}\right\rVert_{L^2(B(x_j(J_n),R))}\\
&\geq \left\lVert\phi_j(\cdot-x_j(J_n))e^{ic_j(J_n)}-\sum_{j=1}^K\phi_j(\cdot-y_j)e^{i\gamma_j}\right\rVert_{L^2(B(x_j(J_n),R))},\quad \forall j\geq 2.
\end{align*}
Thus, each $j\geq 2$ there exists $y_{k(j)}\neq 1$ near $x_j(J_n)$. Hence, each $j\geq 2$, there exists only one $y_{k(j)}$ near $x_j(J_n)$. Since $\phi_j \neq \phi_k$, for $j\neq k$ we have $k(j)=j$ for all $j\geq 2$ i.e $y_k-x_k(J_n)=O(1)$, for all $j \geq 2$ uniformly in $n$. This implies that
\begin{align*}
\left\lVert\sum_{j=2}^K\phi_j(\cdot-x_j(J_n))e^{ic_j(J_n)}-\phi_j(\cdot-y_j)e^{i\gamma_j}\right\rVert_{L^2} &=O_{I_n\rightarrow -\infty}(1)\leq \frac{\varepsilon}{3}. 
\end{align*} 
Thus, 
\begin{align*}
\inf_{y_j\in\R;\gamma_j\in\R}\left\lVert w_n(J_n)-\sum_{j=1}^K\phi_j(\cdot-y_j)e^{i\gamma_j}\right\rVert_{L^2}&\geq \left\lVert  w_n(J_n)-\sum_{j=1}^K\phi_j(\cdot-y_j)e^{i\gamma_j}\right\rVert_{L^2(B(0,M))}\\
&\geq \norm{u_n(T_n)-\phi(\cdot-y_1)e^{i\gamma_1}}_{L^2(B(0,M))}-\frac{2\varepsilon}{3}\\
&\geq \varepsilon-\frac{2\varepsilon}{3}=\frac{\varepsilon}{3},
\end{align*}
where we use Corollary \ref{corollary1}. Moreover,
\[
\left\lVert w_n(I_n)-\sum_{j=1}^KR_j(I_n)\right\rVert_{H^2}\rightarrow 0,
\]
as $n\rightarrow\infty$. Thus, we obtain the desired result.
\end{proof}

\section{Appendix}
In this section, we consider an operator $L: H^2(\R,\C^2)\subset L^2(\R,C^2)\rightarrow L^2(\R,\C^2)$ of the form
\[
L=\begin{pmatrix}
W_{1,1}+W_{1,2}\partial_x & \partial_{xx}-\frac{h_1^2}{4}+W_2\\
-\partial_{xx}+\frac{h_1^2}{4}+W_3& W_{4,1}+W_{4,2}\partial_x
\end{pmatrix},
\]
where $h_1\in\R$ and $W_{1,1},W_{1,2},W_2,W_3,W_{4,1},W_{4,2}$ belong to $\mathcal{H}(\C)$, where $\mathcal{H}(\C)$ is defined by \eqref{define of H(K)}. We prove the following result.
\begin{proposition}\label{pro25}
Let $\lambda\in \C\setminus \{iy,y\in\R,|y|\geq \frac{h_1^2}{4}\}$, and $U=\begin{pmatrix}u\\ v\end{pmatrix} \in H^2(\R,\C^2)$ such that $LU=\lambda U$. We have the following results.
\begin{itemize}
\item[(i)] There exist $C>0$ and $\alpha>0$ such that for all $x\in\R$ we have
\begin{equation}\label{need to solve}
|u(x)|+|v(x)|+|u'(x)|+|v'(x)|\leq Ce^{-\alpha |x|}.
\end{equation}
Moreover, $u,v\in\mathcal{H}(\C)$. 
\item[(ii)] Let $\lambda\notin Sp(L)$ and take $A\in\mathcal{H}(\C^2)$. Then there exists $X\in \mathcal{H}(\C^2)$ such that $(L-\lambda Id)X=A$.
\end{itemize}
\end{proposition}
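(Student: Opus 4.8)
The plan is to convert both equations into first order linear ODE systems that are exponentially localized perturbations of a \emph{constant coefficient} matrix, and then to invoke the classical theory of exponential dichotomies. Writing $U=(u,v)$ and solving the first row of $LU=\lambda U$ for $v''$ and the second row for $u''$, the spectral equation is equivalent to a system $Y'=(A_\infty+B(x))Y$ for the vector $Y=(u,u',v,v')^{T}$, where $A_\infty\in\C^{4\times 4}$ is constant and the entries of $B(x)$ are complex linear combinations of $W_{1,1},W_{1,2},W_2,W_3,W_{4,1},W_{4,2}$; since these lie in $\mathcal{H}(\C)$ we get $\|B(x)\|\le Ce^{-\alpha_0|x|}$ for some $\alpha_0>0$, so $B\in L^1(\R)$. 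The resolvent equation $(L-\lambda)X=A$ of part (ii) becomes the same system with an inhomogeneity $Y'=(A_\infty+B(x))Y+\tilde A(x)$, where $\tilde A$ is built from the components of $A$ only (no derivative of $A$ is lost in the reduction), so $|\tilde A(x)|\le Ce^{-\alpha|x|}$ whenever $A\in\mathcal{H}(\C^2)$. The workhorse will be the roughness of exponential dichotomies under $L^1$ perturbations (Levinson-type asymptotic integration; the analogous ODE lemmas are used in \cite{CoLe11}): if $A_\infty$ admits an exponential dichotomy on $\R$ and $B\in L^1$, then $A_\infty+B$ admits one on $[0,\infty)$ and on $(-\infty,0]$ with the same exponents, and any solution bounded on $\R$ decays exponentially.

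For (i), I identify $A_\infty$. Seeking $e^{\mu x}$-solutions of the constant coefficient system $v''=\lambda u+\tfrac{h_1^2}{4}v$, $u''=\tfrac{h_1^2}{4}u-\lambda v$ gives $(\mu^2-\tfrac{h_1^2}{4})^2=-\lambda^2$, hence $\mu^2=\tfrac{h_1^2}{4}\pm i\lambda$ and the four exponents of $A_\infty$ are $\pm\sqrt{\tfrac{h_1^2}{4}+i\lambda}$ and $\pm\sqrt{\tfrac{h_1^2}{4}-i\lambda}$. A short check shows $\tfrac{h_1^2}{4}\pm i\lambda\in(-\infty,0]$ exactly when $\lambda\in\{is:|s|\ge\tfrac{h_1^2}{4}\}$, so under the hypothesis of Proposition \ref{pro25} none of the four exponents is purely imaginary, and by the symmetry $\mu\leftrightarrow-\mu$ exactly two have positive and two have negative real part; thus $A_\infty$ has an exponential dichotomy on $\R$ with rate $\gamma:=\min|\mathrm{Re}\,\mu|>0$, and so does $A_\infty+B$ on each half line. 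Since $U\in H^2(\R)\hookrightarrow C^1(\R)$ with $u,v,u',v'$ vanishing at $\pm\infty$, $Y$ is bounded and tends to $0$; as there is no center direction this forces $Y$ into the stable subspace at $+\infty$ and the unstable subspace at $-\infty$, whence $|Y(x)|\le Ce^{-\alpha|x|}$ for any fixed $\alpha<\gamma$, which is \eqref{need to solve}. For the $\mathcal{H}(\C)$ statement I differentiate the identities expressing $u'',v''$ in terms of $u,v,u',v'$ and the $W$'s: since $u,v$ and their first derivatives, together with all derivatives of the coefficients, decay like $e^{-\alpha|x|}$, an induction gives $|\partial_x^a u|+|\partial_x^a v|\le C_a e^{-\alpha|x|}$; ellipticity with smooth coefficients gives $u,v\in C^\infty$, and $e^{-\alpha|x|}\in L^2$ then gives $u,v\in H^\infty$, so $u,v\in\mathcal{H}(\C)$ with the same $\alpha$.

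For (ii), I first note $\{is:|s|\ge\tfrac{h_1^2}{4}\}=Sp_{\mathrm{ess}}(L)\subseteq Sp(L)$: $L$ is a relatively compact perturbation of the constant coefficient operator obtained by dropping the $W$'s (multiplication by a coefficient vanishing at infinity, composed with a local compact embedding, plus decay), and the latter operator has spectrum $\{is:|s|\ge\tfrac{h_1^2}{4}\}$ by the Fourier transform; so Weyl's theorem applies and $\lambda\notin Sp(L)$ puts $\lambda$ in the dichotomy region of step (i), with some rate $\gamma_\lambda>0$. Set $X:=(L-\lambda)^{-1}A\in H^2(\R,\C^2)$; then $Y=(X_1,X_1',X_2,X_2')$ is a bounded ($L^2$) solution of $Y'=(A_\infty+B)Y+\tilde A$. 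The homogeneous system has no nonzero bounded solution — by the dichotomy it would be an $H^2$ eigenfunction, contradicting $\lambda\notin Sp(L)$ — so $Y$ is the unique bounded solution and is represented via the dichotomy Green kernel $\mathcal{G}$, with $|\mathcal{G}(x,s)|\le Ce^{-\gamma_\lambda|x-s|}$, as $Y(x)=\int_\R\mathcal{G}(x,s)\tilde A(s)\,ds$. Taking $\alpha$ small enough that $\alpha<\gamma_\lambda$ (harmless, since the value of $\alpha$ entering the definition of $\mathcal{H}$ is at our disposal and may be taken arbitrarily small), the convolution bound $\int_\R e^{-\gamma_\lambda|x-s|}e^{-\alpha|s|}\,ds\le Ce^{-\alpha|x|}$ gives $|Y(x)|\le Ce^{-\alpha|x|}$, i.e. $X$ and $X'$ decay like $e^{-\alpha|x|}$. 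Bootstrapping exactly as in (i), now using in addition that all $\partial_x^a A$ decay like $e^{-\alpha|x|}$ because $A\in\mathcal{H}(\C^2)$, yields $|\partial_x^a X|\le C_a e^{-\alpha|x|}$ and $X\in C^\infty$, hence $X\in\mathcal{H}(\C^2)$.

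The one genuinely delicate point is the dichotomy structure of $A_\infty$: computing the exponents $\pm\sqrt{h_1^2/4\pm i\lambda}$ and checking that the hypothesis on $\lambda$ is \emph{exactly} the condition ruling out purely imaginary exponents, and then invoking the dichotomy roughness theorem in the form allowing a merely $L^1$ perturbation. Everything else — the Sobolev decay at infinity, the Weyl essential-spectrum computation, the convolution estimate, and the derivative bootstrap — is routine; in particular the possible coincidence of exponents (e.g. at $\lambda=0$ they are $\pm h_1/2$, each of multiplicity two, but $A_\infty$ stays diagonalizable) does not affect the dichotomy or the argument.
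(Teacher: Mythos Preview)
Your proof is correct and follows a genuinely different route from the paper's. The paper first conjugates $L$ by the constant matrix $P=\begin{pmatrix}1&i\\1&-i\end{pmatrix}$ to obtain $L'=iPLP^{-1}$, whose principal part is the diagonal pair of Helmholtz operators $\pm(\partial_{xx}-h_1^2/4)$; it then writes the components of $U'=PU$ as convolutions with the explicit fundamental solutions $g_{-h_1^2/4-\lambda'}$ and $g_{\lambda'-h_1^2/4}$ (your quantities $h_1^2/4\pm i\lambda$ appear there as $-\mu_1,-\mu_2$), and proves exponential decay of an auxiliary positive majorant $w=\tilde u+\tilde v+\tilde{u^1}+\tilde{v^1}$ by a barrier/maximum-principle comparison with $\psi(x)=C_Re^{-\sqrt{\tau/2}(|x|-R)}$, following \cite{CoLe11} and \cite{DeLi07}. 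You replace all of this by the first-order reduction $Y'=(A_\infty+B)Y$ and the roughness of exponential dichotomies under $L^1$ perturbations: once you compute the spectrum of $A_\infty$ to be $\{\pm\sqrt{h_1^2/4\pm i\lambda}\}$ and check hyperbolicity, the Levinson/roughness machinery and the dichotomy Green kernel bound $|\mathcal G(x,s)|\le Ce^{-\gamma_\lambda|x-s|}$ do the rest. Your approach is more systematic and generalizes verbatim to higher-order or more complicated systems; the paper's approach is more self-contained (no black-box dichotomy theorem, only explicit Helmholtz Green's functions and a one-dimensional maximum-principle step) and stays close to the template of \cite{CoLe11}. Both arguments identify exactly the same spectral obstruction $h_1^2/4\pm i\lambda\notin(-\infty,0]$ and both finish by the same derivative bootstrap.
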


To prove Proposition \ref{pro25}, we study the fundamental solutions to Helmholtz equations. For a given $\mu\in\C$, a fundamental solution to Helmholtz equation in $\R$ is a solution of 
\[
(-\partial_{xx}-\mu)g_{\mu}=\delta_0.
\]
For $\mu=\rho e^{i\theta}$ with $\rho\geq 0$ and $\theta\in (0,2\pi]$, we define $\sqrt{\mu}=\rho^{\frac{1}{2}}e^{\frac{i\theta}{2}}$. We have the following result (see \cite[Lemma 26]{CoLe11}).
\begin{lemma}\label{lemma bounded of fundamental solution}
Let $\mu\in \C\setminus \{\R^+\}$. Then there exist $\tau>0$ and $C>0$ such that
\[
|g_{\mu}(x)|\leq Cg_{-\tau}(x) \quad \forall x\in\R\setminus\{0\}.
\]
In particular, $g_{\mu}$ is exponentially decaying at infinity with decay rate $\sqrt{\tau}$ i.e $|g_{\mu}(x)|\leq Ce^{-\sqrt{\tau}|x|}$ for $|x|$ large enough. 
\end{lemma}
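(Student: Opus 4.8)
The plan is to exhibit $g_\mu$ explicitly and then read the bound straight off its modulus. Writing $s:=\sqrt{\mu}$ for the branch fixed in the statement, I claim the relevant (decaying, $L^2$) fundamental solution is
\[
g_\mu(x)=\frac{i}{2s}\,e^{is|x|}.
\]
I would obtain this by Fourier transform: the equation $(-\partial_{xx}-\mu)g_\mu=\delta_0$ becomes $(\xi^2-\mu)\widehat{g_\mu}(\xi)=1$, and inverting $\widehat{g_\mu}(\xi)=(\xi^2-\mu)^{-1}$ by residues at the poles $\xi=\pm s$ — closing the contour in the upper or lower half-plane according to the sign of $x$, which is legitimate precisely because $\Im s\neq 0$ — produces the formula above. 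Alternatively one verifies it directly: for $x\neq 0$ it solves $-g''-\mu g=0$ since $s^2=\mu$, it is continuous at the origin, and the jump $g_\mu'(0^+)-g_\mu'(0^-)=-1$ reproduces the Dirac mass, so that $-g_\mu''-\mu g_\mu=\delta_0$ holds in the sense of distributions.

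The essential analytic input is the sign of $\Im s$. With the convention $\mu=\rho e^{i\theta}$, $\theta\in(0,2\pi]$, and $\sqrt{\mu}=\rho^{1/2}e^{i\theta/2}$, the hypothesis $\mu\notin\R^+$ forces $\theta\in(0,2\pi)$, hence $\theta/2\in(0,\pi)$ and
\[
\Im s=\rho^{1/2}\sin(\theta/2)>0.
\]
Consequently $|e^{is|x|}|=e^{-\Im(s)|x|}$ decays, and
\[
|g_\mu(x)|=\frac{1}{2|s|}\,e^{-\Im(s)|x|},\qquad x\neq 0.
\]

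To compare with $g_{-\tau}$, I would apply the same formula to the negative real number $-\tau$ with $\tau>0$: here $\sqrt{-\tau}=i\sqrt{\tau}$, so
\[
g_{-\tau}(x)=\frac{i}{2i\sqrt{\tau}}\,e^{i(i\sqrt{\tau})|x|}=\frac{1}{2\sqrt{\tau}}\,e^{-\sqrt{\tau}\,|x|}>0.
\]
It then suffices to match the decay rates: taking $\tau:=(\Im s)^2>0$ gives $\sqrt{\tau}=\Im s$, so the two exponentials coincide and
\[
|g_\mu(x)|=\frac{\Im s}{|s|}\,g_{-\tau}(x)=:C\,g_{-\tau}(x),
\]
which is the claimed inequality (any smaller $\tau$ works as well, at the cost of a larger constant $C$). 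The final ``in particular'' assertion follows at once, since $g_{-\tau}(x)=\tfrac{1}{2\sqrt{\tau}}e^{-\sqrt{\tau}|x|}$ already displays exponential decay of rate $\sqrt{\tau}$.

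The computation is elementary; the only point requiring genuine care is the branch bookkeeping that guarantees $\Im\sqrt{\mu}>0$ on the slit plane $\C\setminus\R^+$, together with the observation that the pertinent object is the decaying $L^2$ fundamental solution rather than an arbitrary distributional solution (the latter being determined only up to homogeneous solutions $e^{\pm isx}$). I would isolate that branch fact as the single substantive step, everything else being direct substitution.
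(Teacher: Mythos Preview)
Your proof is correct and follows essentially the same route as the paper: both write $g_\mu(x)=\frac{i}{2\sqrt{\mu}}e^{i\sqrt{\mu}|x|}$, observe that the branch convention forces $\Im\sqrt{\mu}=\rho^{1/2}\sin(\theta/2)>0$, and set $\sqrt{\tau}=\Im\sqrt{\mu}$ to obtain the comparison with $g_{-\tau}$ (your constant $C=\Im s/|s|=\sin(\theta/2)$ coincides with the paper's). Your additional remarks on deriving the fundamental solution via residues or direct verification, and on singling out the decaying $L^2$ solution, go a bit beyond what the paper records but are entirely in line with it.
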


\begin{proof}
We have $\sqrt{\mu}=\rho^{\frac{1}{2}}e^{\frac{i\theta}{2}}$. It is well known that $g_{\mu}=\frac{i}{2\sqrt{\mu}}e^{i\sqrt{\mu}|x|}$. Thus, choosing $\tau>0$ such that $\sqrt{\tau}=\rho^{\frac{1}{2}}\sin\left(\frac{\theta}{2}\right)$, we have
\[
|g_{\mu}(x)|=\frac{1}{2|\sqrt{\mu}|}\vert e^{i\rho^{\frac{1}{2}}e^{i\frac{\theta}{2}}|x|}\vert \leq \frac{1}{2\sqrt{\rho}}e ^{-\rho^{\frac{1}{2}}\sin\left(\frac{\theta}{2}\right)|x|}.
\]  
Since
\[
g_{-\tau}(x)=\frac{1}{2\sqrt{\rho}\sin\left(\frac{\theta}{2}\right)}e ^{-\rho^{\frac{1}{2}}\sin\left(\frac{\theta}{2}\right)|x|},
\]
we obtain the desired result.
\end{proof}

The following regularity result on eigenfunctions is trivial.
\begin{lemma}\label{lemma 25}
Under the assumptions of Proposition \ref{pro25}, the functions $u,v \in H^{\infty}(\R,\C)$ and $\lim_{|x|\rightarrow\infty}(|u(x)|+|v(x)|+|\partial_xu(x)|+|\partial_xv(x)|)=0$.
\end{lemma}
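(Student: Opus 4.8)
The plan is a routine elliptic bootstrap, so I only sketch it. First I would extract from $LU=\lambda U$, writing $U=(u,v)^{T}$, the two scalar identities obtained by reading off the rows of $L$ and solving for the top-order derivatives:
\[
u''=\Big(\tfrac{h_1^2}{4}+W_3\Big)u+W_{4,2}\,v'+(W_{4,1}-\lambda)v,\qquad
v''=(\lambda-W_{1,1})u-W_{1,2}\,u'+\Big(\tfrac{h_1^2}{4}-W_2\Big)v.
\]
Every coefficient occurring here is either a constant or an element of $\mathcal H(\C)$, hence smooth and bounded together with all of its derivatives.

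The limit statement is immediate from Sobolev embedding: since $U\in H^2(\R,\C^2)$ we have $u,v\in H^2(\R)$ and $u',v'\in H^1(\R)$, and $H^1(\R)\hookrightarrow C_0(\R)$ (continuous functions vanishing at infinity), so each of $|u(x)|,|v(x)|,|u'(x)|,|v'(x)|$ tends to $0$ as $|x|\to\infty$. For the $H^\infty$ statement I would bootstrap on the two displayed equations: using that multiplication by an element of $\mathcal H(\C)$ maps $H^s(\R)$ into itself for every $s\geq 0$, if $u,v\in H^{k}(\R)$ for some $k\geq 2$ then $u',v'\in H^{k-1}(\R)$, so the right-hand sides above lie in $H^{k-1}(\R)$, whence $u'',v''\in H^{k-1}(\R)$, i.e.\ $u,v\in H^{k+1}(\R)$. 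Starting from $k=2$ and iterating gives $u,v\in H^{k}(\R)$ for every $k$, that is $u,v\in H^\infty(\R,\C)$.

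No genuine difficulty arises; the only point meriting a line of justification is the mapping property $\mathcal H(\C)\cdot H^s(\R)\subset H^s(\R)$, which follows from the Leibniz formula together with the uniform boundedness on $\R$ of every derivative of each $W_{i,j}$. The remainder is the classical elliptic iteration, which is why the statement is recorded as trivial.
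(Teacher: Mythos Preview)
Your proposal is correct and is exactly the routine argument the paper has in mind: the paper gives no proof at all beyond declaring the result ``trivial,'' and your elliptic bootstrap via the two scalar equations extracted from $LU=\lambda U$, together with the Sobolev embedding $H^1(\R)\hookrightarrow C_0(\R)$, is the standard way to justify such a statement. Your displayed identities for $u''$ and $v''$ match the rows of $L$ correctly, and the iteration step $H^k\Rightarrow H^{k+1}$ is sound since the coefficients $W_{i,j}\in\mathcal H(\C)$ are multipliers on every $H^s$.
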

For the rest of the proof, we work with the following operator
\[
L'=iPLP^{-1}=\begin{pmatrix}
\partial_{xx}-\frac{h_1^2}{4}+\tilde{W_{1,1}}\partial_x+\tilde{W_{1,2}}& \tilde{W_{2,1}}\partial_x+\tilde{W_{2,2}}\\
\tilde{W_{3,1}}\partial_x+\tilde{W_{3,2}}&-\partial_{xx}+\frac{h_1^2}{4}+\tilde{W_{4,1}}\partial_x+\tilde{W_{4,2}}
\end{pmatrix},
\]
where $P=\begin{pmatrix}
1 & i\\ 1&-i
\end{pmatrix}$ and
\begin{align*}
\tilde{W_{1,1}}&=\frac{i}{2}W_{1,2}+\frac{i}{2}W_{4,2}\\
\tilde{W_{1,2}}&=\frac{i}{2}W_{1,1}+\frac{1}{2}W_2-\frac{1}{2}W_3+\frac{i}{2}W_4\\
\tilde{W_{2,1}}&=\frac{i}{2}W_{2,1}-\frac{i}{2}W_{4,2}\\
\tilde{W_{2,2}}&=\frac{i}{2}W_{1,1}-\frac{i}{2}W_2-\frac{1}{2}W_3-\frac{i}{2}W_{4,1}\\
\tilde{W_{3,1}}&=\frac{i}{2}W_{1,2}-\frac{i}{2}W_{4,2}\\
\tilde{W_{3,2}}&=\frac{i}{2}W_{1,1}+\frac{1}{2}W_2+\frac{1}{2}W_3-\frac{i}{2}W_{4,1}\\
\tilde{W_{4,1}}&=\frac{i}{2}W_{1,2}+\frac{i}{2}W_{4,2}\\
\tilde{W_{4,2}}&=\frac{i}{2}W_{1,1}-\frac{1}{2}W_2+\frac{1}{2}W_3+\frac{i}{2}W_{4,1}.
\end{align*}
Thus, $\tilde{W_{i,j}} \in \mathcal{H}(\C)$ for each $i=1,...,4$ and $j=1,2$. Then the spectrum of $L'$ is $Sp(L')=iSp(L)$. We see that if $\lambda$ is an eigenvalue of $L$ with eigenvector $U$ then $\lambda'=i\lambda$ is an eigenvalue of $L'$ with eigenvector $U'=\begin{pmatrix}u'\\v'\end{pmatrix}=PU$. 

Write $L'-\lambda' I=H+K$, where
\begin{align*}
H:=\begin{pmatrix}
\partial_{xx}-\frac{h_1^2}{4}-\lambda' & 0\\
0& -\partial_{xx}+\frac{h_1^2}{4}-\lambda'
\end{pmatrix} & \quad\text{ and } K:=\begin{pmatrix}
\tilde{W_{1,1}}\partial_x+\tilde{W_{1,2}}& \tilde{W_{2,1}}\partial_x+\tilde{W_{2,2}}\\
\tilde{W_{3,1}}\partial_x+\tilde{W_{3,2}}& \tilde{W_{4,1}}\partial_x+\tilde{W_{4,2}}
\end{pmatrix}.
\end{align*}
Define
\[
F:=\begin{pmatrix}
f_1\\f_2
\end{pmatrix}:=KU'=\begin{pmatrix}
(\tilde{W_{1,1}}\partial_x+\tilde{W_{1,2}})u'+(\tilde{W_{2,1}}\partial_x+\tilde{W_{2,2}})v'\\
(\tilde{W_{3,1}}\partial_x+\tilde{W_{3,2}})u'+(\tilde{W_{4,1}}\partial_x+\tilde{W_{4,2}})v'
\end{pmatrix}.
\]
We have
\begin{align*}
u'=g_{-\frac{h_1^2}{4}-\lambda'}*(-f_1)&\quad v'=g_{\lambda'-\frac{h_1^2}{4}}*f_2.
\end{align*}
Let $\mu_1=-\frac{h_1^2}{4}-\lambda'$ and $\mu_2=\lambda'-\frac{h_1^2}{4}$. Since $\lambda \notin \left\{iy,y\in\R,|y|\geq\frac{h_1^2}{4}\right\}$, we have $\mu_1,\mu_2$ satisfy the assumption of Lemma \ref{lemma bounded of fundamental solution}. Let $\tau_1,\tau_2$ be given as in Lemma \ref{lemma bounded of fundamental solution} and set $\tau:=\min\{\tau_1,\tau_2\}$. Define
\begin{align*}
\tilde{F}:=\begin{pmatrix}
\tilde{f_1}\\\tilde{f_2}
\end{pmatrix}=\begin{pmatrix}
|f_1|\\|f_2|
\end{pmatrix} \quad \text{ and } \tilde{G}:=\begin{pmatrix}
\tilde{g_1}\\\tilde{g_2}
\end{pmatrix}=\begin{pmatrix}
|\partial_x f_1|\\|\partial_xf_2|
\end{pmatrix}
\end{align*}
\begin{align*}
\tilde{u}:=g_{-\tau}*\tilde{f_1} &\quad \text{ and }\tilde{v}=g_{-\tau}*\tilde{f_2}\\
\tilde{u^1}:=g_{-\tau}*\tilde{g_1} &\quad \text{ and }\tilde{v^1}=g_{-\tau}*\tilde{g_2}.
\end{align*}

\begin{lemma}\label{lemma 28}
There exists $C>0$ such that
\begin{align*}
|u'|\leq C\tilde{u} &\quad \text{ and } |v'|\leq C\tilde{v},
|\partial_x u'|\leq C\tilde{u^1}&\quad\text{ and } |\partial_xv'|\leq C\tilde{u^2}.
\end{align*}
\end{lemma}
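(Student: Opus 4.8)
The plan is to read pointwise bounds directly off the convolution representations $u'=g_{\mu_1}*(-f_1)$, $v'=g_{\mu_2}*f_2$ obtained above, using the kernel domination supplied by Lemma \ref{lemma bounded of fundamental solution} together with the exponential decay of $f_1,f_2$.

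First I would observe that all convolutions in sight converge absolutely: by definition $f_1,f_2$ are finite sums of products of the coefficients $\tilde W_{i,j}\in\mathcal H(\C)$, hence exponentially decaying, with $u',v',\partial_xu',\partial_xv'$, which are bounded and vanish at infinity by Lemma \ref{lemma 25} (applied to $U'=PU$); likewise $\partial_xf_1,\partial_xf_2$ decay exponentially since moreover $u',v'\in H^{\infty}(\R)$, so that $\partial_{xx}u',\partial_{xx}v'$ are bounded. Next, by Lemma \ref{lemma bounded of fundamental solution} there are $C_1,C_2>0$ with $|g_{\mu_1}(x)|\leq C_1 g_{-\tau_1}(x)$ and $|g_{\mu_2}(x)|\leq C_2 g_{-\tau_2}(x)$ for $x\neq 0$, and since $\tau=\min\{\tau_1,\tau_2\}>0$ the elementary inequality $g_{-s}(x)=\frac{1}{2\sqrt s}\,e^{-\sqrt s|x|}\leq g_{-\tau}(x)$ for $s\geq\tau$ yields $|g_{\mu_j}|\leq C g_{-\tau}$ pointwise. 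As $g_{-\tau}\geq 0$, taking absolute values inside the convolutions gives
\[
|u'(x)|\leq\big(|g_{\mu_1}|*|f_1|\big)(x)\leq C\big(g_{-\tau}*\tilde{f_1}\big)(x)=C\,\tilde{u}(x),\qquad |v'(x)|\leq C\,\tilde{v}(x).
\]

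For the derivatives the key move is to transfer $\partial_x$ from the kernel onto $f_j$. Because $g_{\mu_j}\in W^{1,1}(\R)$ (it is Lipschitz, with distributional derivative $g_{\mu_j}'(x)=-\tfrac{1}{2}\,\mathrm{sgn}(x)\,e^{i\sqrt{\mu_j}|x|}\in L^1(\R)$, carrying no Dirac mass since $g_{\mu_j}$ is continuous at $0$) and $f_j,\partial_xf_j$ decay exponentially, Young's inequality together with dominated convergence justify $\partial_x(g_{\mu_j}*f_j)=g_{\mu_j}*\partial_xf_j$. Hence
\[
|\partial_xu'(x)|\leq\big(|g_{\mu_1}|*|\partial_xf_1|\big)(x)\leq C\big(g_{-\tau}*\tilde{g_1}\big)(x)=C\,\tilde{u^1}(x),
\]
and in the same way $|\partial_xv'|\leq C\,\tilde{v^1}$. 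Replacing $C$ by the maximum of the four constants produced above gives the statement.

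The only point that genuinely requires care is the interchange $\partial_x(g_{\mu_j}*f_j)=g_{\mu_j}*\partial_xf_j$: one must verify that no boundary term survives and that the corner of $g_{\mu_j}$ at the origin causes no trouble, which is exactly where the absence of a singular component in $g_{\mu_j}'$ and the rapid decay of $f_j$ and $\partial_xf_j$ enter. Everything else is a direct combination of the convolution formulas with Lemma \ref{lemma bounded of fundamental solution}.
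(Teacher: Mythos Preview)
Your argument is correct and follows essentially the same route as the paper's proof: dominate $|g_{\mu_j}|$ by $Cg_{-\tau}$ via Lemma~\ref{lemma bounded of fundamental solution}, then take absolute values inside the convolution, and for the derivatives use $\partial_x(g_{\mu_j}*f_j)=g_{\mu_j}*\partial_x f_j$. The only difference is that you spell out the justification of this last interchange and the absolute convergence of the integrals, which the paper leaves implicit.
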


\begin{proof}
From Lemma \ref{lemma bounded of fundamental solution}, $|g_{\mu_1}| \leq Cg_{-\tau_1}\leq Cg_{-\tau}$ for some $C>0$. Thus,
\begin{align*}
|u'|&= |g_{\mu_1}*(-f_1)|\leq Cg_{-\tau}*\tilde{f_1}=C\tilde{u},\\
|\partial_x u'|&=|g_{\mu_1}*\partial_x(-f_1)|\leq Cg_{-\tau}*\tilde{g_1}=C\tilde{u^1}.
\end{align*}
Similarly, we have $|v'|\leq C\tilde{v}$ and $|\partial_xv'|\leq C\tilde{v^1}$ for some $C>0$. This completes the proof.
\end{proof}

\begin{lemma}\label{lemma 29}
Set $w:=\tilde{u}+\tilde{v}+\tilde{u^1}+\tilde{v^1}$. There exist $C>0$ and $\alpha>0$ such that
\[
w(x) \leq Ce^{-\alpha|x|}, \quad \forall x\in\R.
\]
\end{lemma}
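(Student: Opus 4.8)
The plan is to convert the definition $w=g_{-\tau}*(\tilde f_1+\tilde f_2+\tilde g_1+\tilde g_2)$ into a closed pointwise bound of the type $w\le C\,g_{-\tau}*e^{-\alpha_0|\cdot|}$ and then read off exponential decay from the exponential decay of $g_{-\tau}$ furnished by Lemma \ref{lemma bounded of fundamental solution}. Throughout, $\alpha_0>0$ denotes the common exponential decay rate of the coefficients $\tilde W_{i,j}$ and of all their derivatives, which is available because $\tilde W_{i,j}\in\mathcal{H}(\C)$.

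First I would estimate $\tilde f_i$ and $\tilde g_i$ pointwise. Since $F=KU'$, each $f_i$ is a linear combination of the components of $U'$ and their first derivatives with coefficients drawn from the $\tilde W_{i,j}$; combining $|\tilde W_{i,j}(x)|\le Ce^{-\alpha_0|x|}$ with the boundedness of $U'$ and $\partial_x U'$ supplied by Lemma \ref{lemma 25}, one obtains $\tilde f_i(x)=|f_i(x)|\le Ce^{-\alpha_0|x|}$. For $\tilde g_i=|\partial_x f_i|$ one differentiates, which produces the still exponentially decaying coefficients $\partial_x\tilde W_{i,j}$ and the second derivatives $\partial_x^2 U'$; the latter I would remove using the eigenvalue relation $L'U'=\lambda'U'$, which expresses $\partial_x^2 U'$ through $U'$ and $\partial_x U'$ with bounded coefficients (equivalently, $U'\in H^{\infty}$ already makes all its derivatives bounded by Sobolev embedding), so that again $\tilde g_i(x)\le Ce^{-\alpha_0|x|}$. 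Summing the four terms gives $w(x)\le 4C\,g_{-\tau}*e^{-\alpha_0|\cdot|}(x)$. (If one additionally uses Lemma \ref{lemma 28} one obtains the self-improving inequality $w\le C'\,g_{-\tau}*(e^{-\alpha_0|\cdot|}w)$, which already shows $w\in L^{\infty}$ and, upon iteration, yields decay at every rate below $\sqrt{\tau}$; for the present statement the plain bound is enough.)

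Second, I would invoke the elementary convolution estimate for $g_{-\tau}(x)=\frac{1}{2\sqrt{\tau}}e^{-\sqrt{\tau}|x|}$: for $\gamma>0$ with $\gamma\ne\sqrt{\tau}$ one has $g_{-\tau}*e^{-\gamma|\cdot|}(x)\le C_\gamma e^{-\min(\gamma,\sqrt{\tau})|x|}$, while $g_{-\tau}*e^{-\sqrt{\tau}|\cdot|}(x)\le C(1+|x|)e^{-\sqrt{\tau}|x|}$. Applied with $\gamma=\alpha_0$ to the bound obtained above, this gives $w(x)\le Ce^{-\alpha|x|}$ for any $\alpha<\min(\alpha_0,\sqrt{\tau})$; choosing for instance $\alpha=\tfrac12\min(\alpha_0,\sqrt{\tau})$ finishes the proof.

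The one step I expect to require care is the bound on $\tilde g_i=|\partial_x f_i|$: one must ensure that the second-order derivatives of the components of $U'$ appearing there do not spoil the exponential decay, and this is exactly where the differential equation $L'U'=\lambda'U'$ (or the $H^{\infty}$ regularity of $U'$) enters, trading those terms for lower-order ones. The remaining ingredients — exponential decay of the coefficients, boundedness from Lemma \ref{lemma 25}, and the convolution estimate for $g_{-\tau}$ — are routine.
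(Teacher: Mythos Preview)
Your argument is correct, but it follows a genuinely different route from the paper's. The paper does not bound $f=\tilde f_1+\tilde f_2+\tilde g_1+\tilde g_2$ directly by an exponential; instead it observes that $w$ solves the elliptic equation $-w''+\tau w=f$, shows via Lemma~\ref{lemma 28} the self-improving bound $f(x)\le CT(x)\,w(x)$ with $T(x)=\sum(|\tilde W_{i,j}|+|\partial_x\tilde W_{i,j}|)\to 0$, and then runs a maximum-principle comparison with the barrier $\psi(x)=C_Re^{-\sqrt{\tau/2}(|x|-R)}$ on the set $\{w>\psi\}$. Your approach is more elementary: you exploit the exponential decay of the coefficients together with the $L^\infty$ bounds on $U'$ and its derivatives (from Lemma~\ref{lemma 25} and $H^\infty$-regularity) to bound $f$ pointwise by $Ce^{-\alpha_0|x|}$, and then the explicit convolution $g_{-\tau}*e^{-\alpha_0|\cdot|}$ does the rest. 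This is shorter and avoids the maximum principle. What the paper's argument buys is robustness: it uses only that $T(x)\to 0$, not its exponential decay, and it produces the rate $\sqrt{\tau/2}$ which depends solely on $\lambda$ and $h_1$ and not on the decay rate $\alpha_0$ of the potentials. Your rate $\alpha<\min(\alpha_0,\sqrt{\tau})$ is tied to $\alpha_0$, but since the coefficients here are assumed to lie in $\mathcal{H}(\C)$ this costs nothing for the statement at hand.
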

The proof of Lemma \ref{lemma 29} follows closely the proof of \cite[Theorem 1.1]{DeLi07} or \cite[Lemma 29]{CoLe11}.

\begin{proof}
Set $f:=\tilde{f_1}+\tilde{f_2}+\tilde{g_1}+\tilde{g_2}$. We have $w\in C^0(R)$. Indeed, $w$ solves
\begin{equation}\label{A9}
-\partial_{xx}w+\tau w=f,
\end{equation}
and from $f\in L^2(\R)$, this implies $w\in H^2(\R)$ and then $w \in C^0(\R)$.\\
Now, we prove that there exists $R>0$ such that for all $x\in\R$ with $|x|>R$ we have
\begin{align}\label{A10}
\frac{\tau w(x)-f(x)}{w(x)}&\geq \frac{\tau}{2}.
\end{align}
Indeed, setting $T(x):=\sum_{i=1}^4\sum_{j=1}^2|\tilde{W_{i,j}}|+|\partial_x\tilde{W_{i,j}}|$. Since $u'$ solves $(-\partial_{xx}-\mu_1)u'=f_1$, we have $|\partial_{xx}u'|\leq C(|u'|+|f_1|)\leq C(|u'|+|v'|+|\partial_x u'|+|\partial_xv'|)$, for some $C>0$. Similarly, $|\partial_{xx}v'|\leq C(|v'|+|f_2|)\leq C(|u'|+|v'|+|\partial_x u'|+|\partial_xv'|)$. Combining Lemma \ref{lemma 28}, we have
\begin{align*}
f&=\tilde{f_1}+\tilde{f_2}+\tilde{g_1}+\tilde{g_2}\\
&\leq T(x)(|u'|+|v'|+|\partial_x u'|+|\partial_xv'|+|\partial_{xx}u'|+|\partial_{xx}v'|)\\
&\leq CT(x)(|u'|+|v'|+|\partial_x u'|+|\partial_xv'|)=CT(x)w.
\end{align*}
Thus, 
\begin{align*}
\frac{\tau w(x)-f(x)}{w(x)}&\geq \tau -CT(x)\geq \frac{\tau}{2},
\end{align*}
for $|x|>R$ large enough, by decaying of the function $T$. This proves \eqref{A10}. 

Note that $w\geq 0$. Since $w\in C^0(\R)\cap H^2(\R)$, there exists $C_R$ such that for all $x\in\R$ with $|x|<R$, we have
\[
0\leq w\leq C_R.
\]
Define $\psi(x):=C_Re^{-\frac{\tau}{2}(|x|-R)}$. We have
\begin{align}
-\partial_{xx}\psi+\frac{\tau}{2}\psi\geq 0&\quad \text{ on } \R\setminus\{0\},\nonumber\\
 w(x)-\psi(x)\leq 0 &\quad \text{ on } \{x\in\R,|x|<R\}.\label{A11}
\end{align} 
Thus, we only need to prove that $w(x)\leq \psi(x)$ for $|x|>R$. We prove by contradiction. Assume that $w(x_0)>\psi(x_0)$ for some $|x_0|>R$. Define
\[
\Omega:=\{x\in\R,w(x)>\psi(x)\}.
\]
Then $\Omega$ is not empty and for all $x\in \Omega$, we have $|x|>R$ and for all $x\in\partial\Omega$ we have $w(x)=\psi(x)$. Moreover, by \eqref{A9}, \eqref{A10} and \eqref{A11}, we have 
\begin{align*}
\partial_{xx}(w-\psi)&=\partial_{xx}w-\partial_{xx}\psi=\tau w-f-\partial_{xx}\psi\\
&=\frac{\tau w-f}{w}w-\partial_{xx}\psi\geq \frac{\tau}{2}(w-\psi)>0.
\end{align*}
By maximal principle, this implies that $w-\psi\leq 0$ on $\Omega$, a contradiction. Thus, for all $x\in\R$ we have
\[
w(x)\leq \psi(x)=C_Re^{-\sqrt{\frac{\tau}{2}}(|x|-R)}=Ce^{-\sqrt{\frac{\tau}{2}}|x|}.
\] 
This implies the desired result.
\end{proof}

\begin{proof}[Proof of Proposition \ref{pro25}]
(i) By using Lemma \ref{lemma 25}, \ref{lemma 28} and \ref{lemma 29}, it is easy to imply that \eqref{need to solve} holds. Since $u,v$ solves a system of elliptic equations and \eqref{need to solve}, $u,v$ and their derivative are exponentially decaying at rate $\alpha$. This implies the desired result. \\
(ii) Since $\lambda\notin Sp(L)$, there exists $X\in H^2(\R,\C^2)$ such that $(L-\lambda Id)X=A$. Define $L'=iPLP^{-1}$, $X'=PX$, $\lambda'=i\lambda$ and $A'=iPA$ then
\[
(L'-\lambda' Id)X'=A'.
\]
Recall that $L'-\lambda'=H+K$. Set $Y=\begin{pmatrix}y_1\\y_2\end{pmatrix}:=KX'$, $A'=\begin{pmatrix}a_1\\a_2\end{pmatrix}$ and $X'=\begin{pmatrix}x_1\\x_2\end{pmatrix}$. We have
\[
x_1=g_{-\frac{h_1^2}{4}-\lambda'}*(y_1-a_1) \quad \text{ and }x_2=g_{\lambda'-\frac{h_1^2}{4}}*(a_2-y_2).
\] 
The terms $g_{-\frac{h_1^2}{4}-\lambda'}*(-a_1)$ and $g_{\lambda'-\frac{h_1^2}{4}} * a_2$ are exponentially decaying, with decay rate $\alpha$. Since $\tilde{W_{i,j}} \in \mathcal{H}(\C)$ for each $i=1,...,4$ and $j=1,2$, we have $y_1,y_2\in \mathcal{H}(\C)$. Hence, $g_{-\frac{h_1^2}{4}-\lambda'}*y_1$ and $g_{\lambda'-\frac{h_1^2}{4}} *y_2$ are exponentially decaying with decay rate $\alpha$. Moreover, for each multi-index $a$ we have $D^ax_k=g_{-\frac{h_1^2}{4}\pm \lambda'}*D^a(\pm (a_k-y_k))$, for $k=1,2$. This implies the decay of their derivatives of $X'$. This completes the proof of Proposition \ref{pro25}.  
\end{proof}

\begin{lemma}
\label{lm1}
Let $U_1^{N_0}$, $(R_j)$ ($j=1,...,K$) be profiles given as in the proof of Theorem \ref{thm2} and $f(u)=i|u|^2u_x+b|u|^4u$. Then
\[
f(U_1^{N_0}+\sum_{j\geq 2}R_j)-f(U_1^{N_0})-\sum_{j\geq 2}f(R_j)=O(e^{-h_* v_* |t|}),
\]
where $h_*$ and $v_*$ are defined as in Theorem \ref{thm2}.
\end{lemma}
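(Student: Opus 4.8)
The plan is to expand the left-hand side by multilinearity and to exploit that the profiles $U_1^{N_0},R_2,\dots,R_K$ are exponentially concentrated around the mutually separated points $c_1t,c_2t,\dots,c_Kt$; the statement then reduces to a standard "interaction between non-overlapping bumps is exponentially small" estimate (with all implicit constants allowed to depend on the number of derivatives considered, and the resulting error term lying in $C^\infty(\R^+,H^\infty(\R))$ so that the $O(\cdot)$ notation applies). First I would record pointwise localization bounds. From \eqref{estimate R_j}, together with the fact that each soliton profile $\phi_{\omega_k,c_k}$ and all its $x$-derivatives decay at rate $h_k/2$, one gets, for all $a\in\N$ and $k=1,\dots,K$,
\[
|\partial_x^a R_k(t,x)|\lesssim e^{-\frac{h_k}{2}|x-c_kt|}\leq e^{-\frac{h_*}{2}|x-c_kt|},
\]
using $h_*\leq h_k$. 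For $U_1^{N_0}=R_1+V_1^{N_0}$ one combines the case $k=1$ with $|\partial_x^a V_1^{N_0}(t,x)|\lesssim e^{-\rho t}e^{-\alpha|x-c_1t|}$, which follows from $V_1^{N_0}(t,x)=e^{i\omega_1t}W^{N_0}(t,x-c_1t)$, from $W^{N_0}(t)=aY(t)+O(e^{-2\rho t})$ (Proposition \ref{pro 22}), and from $Y_1,Y_2\in\mathcal{H}(\R^2)$ in \eqref{define Y}; since $h_*\leq 2\alpha$ and $h_*\leq h_1$ this gives $|\partial_x^a U_1^{N_0}(t,x)|\lesssim e^{-\frac{h_*}{2}|x-c_1t|}$. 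This is exactly where the term $2\alpha$ in the definition of $h_*$ enters: $U_1^{N_0}$ inherits only the worse of the two rates $h_1/2$ and $\alpha$.

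Second, the algebraic expansion. Writing $f(u)=i\,u\,\overline{u}\,u_x+b\,u^3\,\overline{u}^{\,2}$ as the sum of a $3$-linear and a $5$-linear expression in the slots $(u,\overline{u},u_x)$, respectively $(u,u,u,\overline{u},\overline{u})$, and setting $G_1:=U_1^{N_0}$ and $G_j:=R_j$ for $2\leq j\leq K$, the quantity in the lemma equals $f\big(\sum_{j=1}^K G_j\big)-\sum_{j=1}^K f(G_j)$. Expanding multilinearly and noting that the subtracted terms $f(G_j)$ cancel precisely the diagonal contributions, it is a finite sum of monomials
\[
m(t,x)=\prod_{\ell=1}^{d}g_\ell,\qquad d\in\{3,5\},\quad g_\ell\in\{G_{j_\ell},\ \overline{G_{j_\ell}},\ \partial_xG_{j_\ell}\},
\]
in each of which at least two distinct indices $j_\ell$ occur.

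Third, I would estimate one such monomial $m$. Pick two of its factors carrying distinct indices, localized around $c_pt$ and $c_qt$ with $p\neq q$. By the first step, and since each remaining factor is pointwise $\lesssim 1$, one has $|m(t,x)|\lesssim e^{-\frac{h_*}{2}|x-c_pt|}e^{-\frac{h_*}{2}|x-c_qt|}$, and by the Leibniz rule the same bound holds for $|\partial_x^s m(t,x)|$ for every $s$, since differentiation in $x$ only produces more products of the same shape. Using the separation $|x-c_pt|+|x-c_qt|\geq|c_p-c_q|\,|t|\geq 9v_*|t|$ and splitting the exponent into halves,
\[
|\partial_x^s m(t,x)|^2\lesssim e^{-h_*(|x-c_pt|+|x-c_qt|)}\leq e^{-\frac{h_*}{2}|x-c_pt|}\,e^{-\frac{9}{2}h_*v_*|t|},
\]
whence $\|\partial_x^s m(t)\|_{L^2}^2\lesssim e^{-\frac{9}{2}h_*v_*|t|}\int_\R e^{-\frac{h_*}{2}|x-c_pt|}\,dx\lesssim e^{-\frac{9}{2}h_*v_*|t|}$ and therefore $\|m(t)\|_{H^s}\lesssim e^{-\frac{9}{4}h_*v_*|t|}\leq e^{-h_*v_*|t|}$. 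Summing over the finitely many monomials yields the claim.

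I do not expect a genuine obstacle here. The two points requiring care are (i) the bookkeeping of the multilinear expansion, i.e. verifying that every surviving monomial genuinely couples two distinct centers; and (ii) checking that neither the derivative slot in the cubic part of $f$ nor the extra $x$-derivatives needed for the $H^s$-estimate spoil the exponential localization — which they do not, since $\partial_x$ preserves the exponential decay of these Schwartz-type profiles up to a constant.
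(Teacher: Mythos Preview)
Your proof is correct and follows essentially the same strategy as the paper: exploit the exponential localization of each profile around its center $c_jt$, observe that the multilinear expansion of $f(\sum G_j)-\sum f(G_j)$ produces only cross terms, and estimate each cross term via $|x-c_pt|+|x-c_qt|\geq |c_p-c_q|\,|t|$. The only organizational difference is that the paper splits $U_1^{N_0}=R_1+V_1^{N_0}$ and treats $R_jR_k$ and $R_jV_1^{N_0}$ interactions separately, whereas you bundle $U_1^{N_0}$ as a single profile with decay rate $h_*/2$; your version is arguably cleaner and makes the role of the constraint $h_*\leq 2\alpha$ more transparent.
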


\begin{proof}
For $j\neq k$, since \eqref{estimate R_j}, we have
\begin{align*}
|R_j(t,x)R_k(t,x)|&\lesssim e^{-\frac{h_j}{2}|x-c_jt|} e^{-\frac{h_k}{2}|x-c_kt|}\\
&\leq e^{-\frac{h_*}{2}(|x-c_jt|+|x-c_kt|)}\leq e^{-\frac{h_*}{2}|c_j-c_k| |t|} \leq e^{-3 h_* v_* |t|}.
\end{align*}
Thus, 
\begin{align*}
\norm{R_j(t)R_k(t)}_{L^2} &\leq \norm{\sqrt{|R_j(t)R_k(t)|}}_{L^{\infty}}\norm{\sqrt{|R_j(t)||R_k(t)|}}_{L^2}\leq  e^{-h_* v_* |t|},
\end{align*}
as $t$ large enough. By similar argument, we obtain the similar estimates of the interaction of the derivatives of $R_j$ and $R_k$.  Recall that $U_1^{N_0}=R_1+V_1^{N_0}$, where $V_1^{N_0}(t,x)=e^{i\omega_1 t}W^{N_0}(t,x-ct)$. Since, $W^{N_0} \in \mathcal{H}(\C)$, we have 
\begin{align*}
|R_j(t,x)V_1^{N_0}(t,x)|&\leq e^{-\frac{h_j}{2}|x-c_jt|}e^{-\alpha |x-c_1t|}\\
&\leq e^{-\frac{h_*}{2}|x-c_j t|}e^{-\frac{h_*}{2}|x-c_1t|}\leq e^{-3h_*v_* |t|}.
\end{align*}
Thus, we deduce that 
\[
\norm{R_j(t)V_1^{N_0}(t)}_{L^2}\leq e^{-h_*v_* |t|},
\]
for $t$ large enough. Similarly, we obtain the similar estimates of the interaction of the derivatives of $R_j$ ($j\geq 2$) and $V_1^{N_0}$. Moreover, we have
\[
f(U_1^{N_0}+\sum_{j\geq 2}R_j)-f(U_1^{N_0})-\sum_{j\geq 2}f(R_j)=\sum_{j\neq k\neq 1}O(R_jR_k)+\sum_{f\geq 2}O(V_1^{N_0}R_j).
\] 
This implies the desired result.
\end{proof}

\section*{Acknowledgement} I would like to thank Prof. Stefan Le Coz for his guidance and encouragement. I am supported by scholarship MESR in my phD. This work is also supported by the ANR LabEx CIMI (grant ANR-11-LABX-0040) within the French State Programme “Investissements d’Avenir.


\bibliographystyle{abbrv}
\bibliography{bibliothequepaper6}

\end{document}